\title{\bf{Limit stable objects on Calabi-Yau 3-folds}
}
\date{}
\author{Yukinobu Toda}
\DeclareFontFamily{U}{rsfs}{%
\skewchar\font127}
\DeclareFontShape{U}{rsfs}{m}{n}{%
<-6>rsfs5<6-8.5>rsfs7<8.5->rsfs10}{}
\DeclareSymbolFont{rsfs}{U}{rsfs}{m}{n}
\DeclareRobustCommand*\rsfs{%
\@fontswitch\relax\mathrsfs}
\newdimen\argwidth
\def\db[#1\db]{
 \setbox0=\hbox{$#1$}\argwidth=\wd0
 \setbox0=\hbox{$\left(\box0\right)$}
  \advance\argwidth by -\wd0
 \left(\kern.3\argwidth\box0 \kern.3\argwidth\right)}
\theoremstyle{plain}
\newtheorem{thm}{Theorem}[section]
\newtheorem{prop}[thm]{Proposition}
\newtheorem{lem}[thm]{Lemma}
\newtheorem{defi}[thm]{Definition}
\newtheorem{rmk}[thm]{Remark}
\newtheorem{prop-defi}[thm]{Proposition-Definition}
\newtheorem{lem-defi}[thm]{Lemma-Definition}
\newtheorem{conj}[thm]{Conjecture}
\newtheorem{exam}[thm]{Example}
\newcommand{\aA}{\mathcal{A}}
\newcommand{\cC}{\mathcal{C}}
\newcommand{\dD}{\mathcal{D}}
\newcommand{\eE}{\mathcal{E}}
\newcommand{\fF}{\mathcal{F}}
\newcommand{\gG}{\mathcal{G}}
\newcommand{\hH}{\mathcal{H}}
\newcommand{\lL}{\mathcal{L}}
\newcommand{\mM}{\mathcal{M}}
\newcommand{\nN}{\mathcal{N}}
\newcommand{\oO}{\mathcal{O}}
\newcommand{\pP}{\mathcal{P}}
\newcommand{\sS}{\mathcal{S}}
\newcommand{\tT}{\mathcal{T}}
\newcommand{\uU}{\mathcal{U}}
\newcommand{\lr}{\longrightarrow}
\newcommand{\Supp}{\mathop{\rm Supp}\nolimits}
\newcommand{\Hom}{\mathop{\rm Hom}\nolimits}
\newcommand{\dR}{\mathbf{R}}
\newcommand{\Hilb}{\mathop{\rm Hilb}\nolimits}
\newcommand{\Pic}{\mathop{\rm Pic}\nolimits}
\newcommand{\ch}{\mathop{\rm ch}\nolimits}
\newcommand{\td}{\mathop{\rm td}\nolimits}
\newcommand{\Ext}{\mathop{\rm Ext}\nolimits}
\newcommand{\Spec}{\mathop{\rm Spec}\nolimits}
\newcommand{\Coh}{\mathop{\rm Coh}\nolimits}
\newcommand{\cneq}{\mathrel{\raise.095ex\hbox{:}\mkern-4.2mu=}}
\newcommand{\eqcn}{\mathrel{=\mkern-4.5mu\raise.095ex\hbox{:}}}
\newcommand{\Cok}{\mathop{\rm Coker}\nolimits}
\newcommand{\Stab}{\mathop{\rm Stab}\nolimits}
\newcommand{\Sch}{\mathop{\rm Sch}\nolimits}
\newcommand{\modu}{\mathop{\rm mod}\nolimits}
\newcommand{\Imm}{\mathop{\rm Im}\nolimits}
\newcommand{\Ker}{\mathop{\rm Ker}\nolimits}
\newcommand{\Ree}{\mathop{\rm Re}\nolimits}
\newcommand{\GL}{\mathop{\rm GL}\nolimits}
\newcommand{\Auteq}{\mathop{\rm Auteq}\nolimits}
\begin{document}
\maketitle
\begin{abstract}
In this paper, we introduce new enumerative invariants of curves on Calabi-Yau 
3-folds via
 certain stable objects in the derived category of 
coherent sheaves. 
We introduce the notion of limit stability 
on the category of perverse coherent sheaves, 
a subcategory in the derived category, 
and construct the moduli spaces of limit stable objects.   
We then define the counting invariants of limit 
stable objects using Behrend's constructible functions on 
that moduli spaces. 
It will turn out 
that our invariants are generalizations of counting invariants of 
stable pairs introduced by Pandharipande and Thomas.
We will also investigate the wall-crossing phenomena of our 
invariants under change of stability conditions.  
\end{abstract}

\section{Introduction}
The purpose of this paper is to 
introduce new enumerative invariants of curves on Calabi-Yau 
3-folds from certain stable objects in the 
derived category of coherent sheaves. 
The notion of stability conditions on derived categories, 
more generally on triangulated categories,  
is introduced by Bridgeland~\cite{Brs1}, motivated by 
Douglas's work on $\Pi$-stability~\cite{Dou1}, \cite{Dou2}. 
However at this time, there are some issues
in studying Bridgeland's stability conditions on projective 
Calabi-Yau 3-folds. 
 Instead, we consider a generalized 
notion of stability conditions which we call \textit{limit stability}, 
and study their stable objects. 
The limit stability is considered as the ``large volume limit''
in the stringy K$\ddot{\textrm{a}}$hler moduli space.
We construct the moduli spaces of limit stable objects, and
 introduce the enumerative invariants of such objects. 

On the other hand, a kind of enumeration problem of objects in the 
derived category 
is studied by Pandharipande and Thomas~\cite{PT}, \cite{PT2}, \cite{PT3}. 
We will see how our invariants relate to the invariants of 
stable pairs introduced by them~\cite{PT}. 
We will also investigate the wall-crossing phenomena 
of our invariants under change of stability conditions, 
and propose a conjectural wall-crossing formula
 which is related to the rationality 
conjecture proposed in~\cite{PT}.

\subsection{Background}
Let $X$ be a non-singular projective Calabi-Yau 3-fold over $\mathbb{C}$.
The \textit{Gromov-Witten (GW) invariants} of $X$ are counting invariants of curves 
on $X$, integrating over the virtual class of the moduli space of stable 
maps
$\overline{M}(X)$, 
$$\overline{M}(X) =\{ (C, f) \mid f\colon C\to X \mbox{ is a stable 
map from a curve }C\}.$$
Since stable maps have non-trivial automorphisms, $\overline{M}(X)$ 
is in general a Deligne-Mumford stack and the GW invariants 
are rational numbers. 
Another kind of counting invariants of curves on $X$, 
called \textit{Donaldson-Thomas (DT) invariants}, are defined 
as the integration over the virtual class of the moduli 
space of the ideal sheaves, 
$$I(X)= \{ I_C \subset \oO_X \mid C\subset X \mbox{ is a 
subscheme with }\dim C \le 1\}.$$
Since $I(X)$ is nothing but the Hilbert scheme, the resulting 
invariants are integer valued. 
The \textit{GW-DT correspondence}~\cite{MNOP} is a 
conjectural relationship between two generating 
functions involving GW invariants, DT
invariants respectively. More precisely, 
one dimensional subschemes $C\subset X$ 
contain zero dimensional subschemes, hence the DT
theory does not directly count curves. 
Instead by dividing by the generating 
series of counting invariants of zero dimensional 
subschemes, we can define
the reduced DT theory which should correspond to the
GW theory in GW-DT correspondences.  

The notion of stable pairs on $X$ is introduced in~\cite{PT}
in order to give a geometric interpretation 
to the reduced DT theory.  
By definition a stable pair consists of data $(F, s)$, 
\begin{align*}
s\colon \oO_X \lr F, 
\end{align*}
where $F\in \Coh(X)$ is a pure one dimensional sheaf,
and $s$ is a morphism satisfying the condition 
$$\dim \Cok(s)=0.$$
The \textit{Pandharipande-Thomas (PT) invariants} are defined 
by the integration over the virtual class of the 
moduli space of stable pairs, 
$$P(X)=\{(F, s) \mid  s\colon \oO _X \lr F \mbox{ is a stable pair }\}, $$
and the \textit{DT-PT correspondence}~\cite{PT} is a conjectural relationship 
between generating functions of reduced DT theory, 
PT theory respectively. 

On the DT-side, any ideal sheaf $I_C \subset \oO_X$ is 
a Gieseker-stable sheaf, hence DT-invariants count 
stable objects in $\Coh(X)$. On the other hand, 
the space $P(X)$ can be viewed 
as the moduli space of the two term complexes, 
$$I^{\bullet}= \{ \oO_X \stackrel{s}{\to} F \} \in D^b(X), $$
where $D^b(X)$ is the bounded derived category of coherent 
sheaves on $X$. Furthermore the obstruction theory which admits the 
virtual class
on $P(X)$ is obtained from the deformation theory of 
objects in $D^b(X)$, not from that of stable pairs.  
 From this observation, 
we guess that $I^{\bullet}$ might be stable objects 
with respect to a certain stability 
condition on $D^b(X)$, and PT-invariants count stable objects.  

Now we are led to consider stability conditions on $D^b(X)$, 
and enumerative problem of stable objects in $D^b(X)$. 
In the next paragraph,
we discuss stability conditions on derived categories.

\subsection{Stability conditions on triangulated categories}
Let $\dD$ be a triangulated category, e.g. 
$\dD=D^b(X)$ for a smooth projective variety $X$. 
The notion of stability conditions
on $\dD$ is introduced by Bridgeland~\cite{Brs1}. 
Roughly speaking a stability condition on $\dD$
consists of data $\sigma=(Z, \aA)$, 
$$Z\colon K(\dD) \lr \mathbb{C}, \quad 
\aA \subset \dD, $$
where $Z$ is a group homomorphism 
called a stability function, and $\aA$ is the heart of 
a bounded t-structure on $\dD$, which satisfy some 
axiom. When $\dD=D^b(X)$, the set of locally finite numerical stability 
conditions $\Stab(X)$ is shown to have the complex 
structure by Bridgeland~\cite{Brs1}, 
and the quotient space
$$
\Auteq (\dD) \backslash \Stab(X)/ \mathbb{C}$$
is a mathematical candidate of the 
stringy K$\ddot{\textrm{a}}$hler moduli space. 
The space $\Stab(X)$ have been studied
in several examples. For instance see~\cite{Brs2}, \cite{Brs3}, 
\cite{Brs4}, \cite{IUU}, \cite{Mac}, \cite{Tho},  
\cite{Tst}, \cite{Tst2}.

Although the notion of stability conditions on triangulated
categories has 
drawn much interest recently, 
we are not able to study the most important case, 
$\dD=D^b(X)$ for 
a projective Calabi-Yau 3-fold $X$
at this time. 
In this case, there are some technical difficulties
to construct examples of stability conditions, so we do not 
know whether $\Stab(X)$ is non-empty or not. 
From the ideas in physical articles~\cite{Dou1}, \cite{Dou2},
there should exist stability conditions
corresponding to the neighborhood of the large volume limits, 
whose stability functions are 
given by, 
\begin{align}\label{charge}
Z_{\sigma}(E)=-\int e^{-(B+i\omega)}\ch(E)\sqrt{\td_X},\end{align}
where $\sigma=B+i\omega \in H^2(X, \mathbb{C})$ with $\omega$ 
an ample class.
Such stability conditions should be parameterized by 
elements of the complexified ample cone, 
$$\sigma \in A(X)_{\mathbb{C}}=\{ B+i\omega \in H^2(X, \mathbb{C}) \mid 
\omega \mbox{ is an ample class} \}.$$
Instead of working with Bridgeland's stability 
conditions, we introduce and study a generalized 
notion of stability conditions which we call \textit{limit 
stability}. The corresponding heart of a
t-structure is the category of perverse coherent sheaves, 
$$\aA^p \subset D^b(X), $$
in the sense of Bezrukavnikov~\cite{Bez} and Kashiwara~\cite{Kashi}.
We will see that 
for $\sigma \in A(X)_{\mathbb{C}}$, 
the stability function (\ref{charge}) 
together with taking $\omega \to \infty$
 determines the set of (semi)stable objects
in $\aA^p$, 
which we call $\sigma$-\textit{limit (semi)stable objects}. 
The notation ``limit'' is used to emphasize that
 our stability conditions should correspond to the
 limit point $\omega = \infty$.  
Some fundamental properties of limit stability
(e.g. existence of Harder-Narasimhan filtrations, Jordan-H\"older filtrations,)
will be studied in Section~\ref{section:limit}.

\subsection{Main results}
We shall study the enumerative problem of 
$\sigma$-limit stable objects $E\in \aA^p$. 
Let us take $\beta \in H^4(X, \mathbb{Q})$ and 
$n\in \mathbb{Q}$. 
We first show the existence of the moduli space of 
limit stable objects. 
The following theorem will be shown in Section~\ref{section:moduli}. 
\begin{thm}\label{main1}
There is a separated algebraic space of finite type
$\lL_n^{\sigma}(X, \beta)$, 
which parameterizes $\sigma$-limit stable objects 
$E\in \aA^p$, satisfying $\det E=\oO_X$ and the  
following numerical condition, 
$$(\ch_0(E), \ch_1(E), \ch_2(E), \ch_3(E))=(-1, 0, \beta, n)
\in H^{\ast}(X, \mathbb{Q}).$$
\end{thm}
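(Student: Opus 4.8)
The plan is to realize $\lL_n^{\sigma}(X,\beta)$ as a rigidification of an open substack of the moduli stack of objects in $D^b(X)$, and to establish its properties in the order: algebraicity, boundedness (finite type), simplicity of limit stable objects (algebraic space), and the valuative criterion (separatedness). First I would note that, since $\aA^p$ is the heart of a bounded t-structure, every $E\in\aA^p$ satisfies $\Ext^i(E,E)=0$ for $i<0$, so $E$ gives a point of the moduli stack of objects in $D^b(X)$ with vanishing negative self-extensions, which by the general theory of moduli of complexes is an algebraic stack locally of finite type over $\mathbb{C}$. The condition that a flat family have all derived fibres in $\aA^p$ is open (compatibility of the perverse t-structure with base change), the determinant gives a morphism to $\Pic(X)$ along which one imposes $\det E=\oO_X$, and the numerical type $(-1,0,\beta,n)$ is locally constant; together these cut out a locally closed substack. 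By the Harder--Narasimhan and Jordan--H\"older theory of limit stability from Section~\ref{section:limit}, $\sigma$-limit stability is an open condition in such families, so the $\sigma$-limit stable objects with the prescribed data form an open substack $\mathfrak{L}$.

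Next comes the heart of the argument, boundedness. For $E\in\aA^p$ with $\ch_0(E)=-1$ we have $H^{-1}(E)=I_Z$, the ideal sheaf of a subscheme $Z$ with $\dim Z\le 1$, and $H^0(E)=Q$ supported in dimension $\le 1$, with $\ch(E)=-\ch(I_Z)+\ch(Q)$ and $\ch_1(I_Z)=\ch_1(Q)=0$ forced by $\ch_1(E)=0$, $\det E=\oO_X$. The splitting of $\ch_2(E)=\beta$ into the effective curve classes of $Z$ and of $Q$ ranges over a finite set, since $\beta$ has only finitely many such splittings into effective classes. The crucial point is to use $\sigma$-limit stability --- and in particular the $\omega\to\infty$ behaviour of the stability function (\ref{charge}) --- to exclude destabilizing sub- and quotient objects built from low-dimensional torsion: this forces $H^{-1}(E)$ to have no zero-dimensional torsion quotient (so that $Z$ is Cohen--Macaulay, of bounded class) and bounds the zero-dimensional invariants of $Q$. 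Since Cohen--Macaulay curves of bounded class form a bounded family, $\chi(\oO_Z)$, hence $\ch_3(I_Z)$, hence $\ch_3(Q)$, lies in a finite set; the objects $E$ then arise as extensions within a bounded family, so $\mathfrak{L}$ is of finite type.

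A $\sigma$-limit stable $E$ is simple, $\Hom(E,E)=\mathbb{C}$, so its automorphisms are the scalars $\mathbb{G}_m$; rigidifying, $\mathfrak{L}$ is a $\mathbb{G}_m$-gerbe over an algebraic space $\lL_n^{\sigma}(X,\beta)$, which is of finite type by boundedness. Alternatively, fixing $m\gg 0$, one uses boundedness to see that $H^{>0}(E(m))=0$ and that $E(m)$ is globally generated in the appropriate $\aA^p$-sense, realizes the $E$'s inside a $\Quot$-type scheme carrying a $\GL_N$-action with stabilizer $\mathbb{G}_m$, and forms the quotient. For separatedness I would use the valuative criterion: given a discrete valuation ring $R$ with fraction field $K$ and two $R$-flat families $E_R,E_R'$ of $\aA^p$-objects that are fibrewise $\sigma$-limit stable and satisfy $E_R|_K\cong E_R'|_K$, clearing denominators yields a nonzero morphism $E_R\to E_R'$ whose restriction to the closed fibre is a nonzero morphism between limit stable objects of the same limit-slope, hence an isomorphism; so $E_R\cong E_R'$ over $R$, giving uniqueness of limits.

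The main obstacle is the boundedness step: because only the total Chern character $(-1,0,\beta,n)$ is fixed, the decomposition $\ch(E)=-\ch(H^{-1}(E))+\ch(H^0(E))$ and especially its zero-dimensional part are not a priori controlled, and it is precisely here that the large-volume limit in the definition of limit stability must be exploited --- through a careful analysis of the sub- and quotient objects of $E$ in $\aA^p$ and the sign of the imaginary part of (\ref{charge}) in the limit. The remaining points --- algebraicity of the stack, the rigidification to an algebraic space, and separatedness --- are comparatively formal once the structural results on limit stability from Section~\ref{section:limit} are available.
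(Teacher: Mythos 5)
Your overall architecture (algebraicity of a mother space of complexes, openness of the relevant conditions, boundedness for finite type, valuative criterion for separatedness) matches the paper's, which works inside Inaba's algebraic space $\mM^{\rm et}$ of simple complexes and its fibre $\mM_0^{\rm et}$ over $[\oO_X]\in\Pic(X)$ rather than via rigidification of a stack; your boundedness sketch also tracks the paper's (purity of $\oO_C$ from $\Hom(\oO_x,E)=0$, finiteness of the set $\nN(\beta)$ of splittings of $\beta$, the lower bound $m(\beta)>-\infty$ from the Hilbert scheme, and the stability inequality $\mu_{\sigma}(\hH^0(E))\ge -3B\omega^2/\omega^3$ bounding $\ch_3(\hH^0(E))$). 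But there is a genuine gap at the step you dismiss as formal: you assert that $\sigma$-limit stability is an open condition in families ``by the Harder--Narasimhan and Jordan--H\"older theory of Section~\ref{section:limit}.'' Existence of HN filtrations object-by-object does not give openness in families, and the standard route via \cite{AP} is explicitly unavailable here because $\aA^p$ is not noetherian (Remark~\ref{rmk:noe}). In the paper this is the content of all of Section 3.4: one must first bound the families $\dD_e$, $\dD_m$ of potential destabilizing quotients and subobjects (using Lemma~\ref{lem:set of objects}), represent the strict epimorphisms and monomorphisms by finite-type $S$-schemes $Q_e$, $Q_m$ (Proposition~\ref{prop:construct}), and then show the stable point $s$ is not in the \emph{closure} of $\pi_m(Q_m)\cup\pi_e(Q_e)$ by a degeneration argument over a DVR, extending a family of $(B,\omega)$-twisted semistable destabilizers across the special fibre (Lemmas~\ref{extension}, \ref{extension2}) and deriving a contradiction with stability of $\eE_s$. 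Without some version of this argument your open substack $\mathfrak{L}$ is not established.

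A secondary caution: your fallback construction via a $\Quot$-type scheme with a $\GL_N$-action and a quotient is precisely what the author says he does not know how to carry out (``We do not [know] how to construct the moduli space as a GIT quotient''), so it should not be offered as an alternative route. The primary route through a moduli stack of complexes with $\Ext^{<0}(E,E)=0$ plus $\mathbb{G}_m$-rigidification is viable and essentially equivalent to using Inaba's space of simple complexes, and your separatedness argument (clearing denominators to get a nonzero map that is nonzero on the closed fibre, hence an isomorphism between limit stable objects of equal phase and numerical class) is the paper's argument.
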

It will turn out that the moduli space $\lL_n^{\sigma}(X, \beta)$
could be non-empty only if $\beta$ is the Poincar\'e dual 
of the homology class of an effective one
 cycle on $X$, and $n\in \mathbb{Z}$. 
(cf.~Remark~\ref{turnout}.)
By Theorem~\ref{main1}
and using Behrend's constructible function~\cite{Beh}, 
$\nu_{L}\colon \lL_n^{\sigma}(X, \beta) \to \mathbb{Z}$, we 
are able to define the counting 
invariant of limit stable objects,
$$L_{n, \beta}(\sigma)\cneq \sum_{n\in \mathbb{Z}}ne(\nu_{L}^{-1}(n))
 \in \mathbb{Z}.$$
We next show the relationship between the integers
$L_{n, \beta}(\sigma)$ and $P_{n, \beta}$, where 
$P_{n, \beta}$ is the PT-invariant 
counting stable pairs 
$(F, s)$ with 
$$\ch_2(F)=\beta, \quad \ch_3(F)=n.$$
See Definition~\ref{PTinv} for the detail. 
We show the following theorem in Section~\ref{section:count}. 
\begin{thm}\label{main2}
Let $\sigma=k\omega +i\omega$ for $k\in \mathbb{R}$.
We have, 
$$L_{n, \beta}(\sigma)=P_{n, \beta}, \quad (k\ll 0), \qquad 
L_{n, \beta}(\sigma)=P_{-n, \beta}, \quad (k\gg 0).
$$
\end{thm}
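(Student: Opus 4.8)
The plan is to reduce the statement to isomorphisms of moduli spaces. Both $L_{n,\beta}(\sigma)$ and $P_{n,\beta}$ are weighted Euler characteristics taken against Behrend's canonical constructible function, and that function is intrinsic — preserved under isomorphisms of schemes (or separated algebraic spaces). So it is enough to construct isomorphisms
\[
\lL^{\sigma}_n(X,\beta)\;\cong\;P_n(X,\beta)\quad(k\ll 0),\qquad
\lL^{\sigma}_n(X,\beta)\;\cong\;P_{-n}(X,\beta)\quad(k\gg 0),
\]
where $P_n(X,\beta)$ denotes the Pandharipande--Thomas moduli space of stable pairs $(F,s)$ with $\ch_2(F)=\beta$ and $\ch_3(F)=n$; these force $\nu_L\leftrightarrow\nu_P$, hence $L_{n,\beta}(\sigma)=P_{n,\beta}$ for $k\ll 0$ and $L_{n,\beta}(\sigma)=P_{-n,\beta}$ for $k\gg 0$.

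For the chamber $k\ll 0$, I would first record that, after rescaling $\omega$, the central charge $Z_{\sigma}(E)$ becomes a polynomial in the rescaling parameter whose leading coefficient is governed by $\ch_0(E)$, the next by $\ch_1(E)$, and so on; thus $\sigma$-limit (semi)stability amounts to the asymptotic, lexicographic comparison of these polynomial phases. The heart of the matter is then a dictionary: for $E\in\aA^p$ with $\det E=\oO_X$ and $\ch(E)=(-1,0,\beta,n)$, the object $E$ is $\sigma$-limit stable with $k\ll 0$ if and only if $E\cong\Cone(s)$ for a stable pair $(F,s)$ with $\ch_2(F)=\beta$, $\ch_3(F)=n$. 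In one direction, $\sigma$-limit semistability forces $\hH^{-1}(E)$ to be a rank-one torsion-free sheaf with trivial determinant, i.e. an ideal sheaf $I_C$ with $\dim C\le 1$; the hypothesis $k\ll 0$ then rules out $0$-dimensional subobjects of $E$ in $\aA^p$ and forces $C$ to be Cohen--Macaulay and $\hH^0(E)$ to be $0$-dimensional, so that $E$ sits in a triangle $\oO_X\stackrel{s}{\to}F\to E$ with $F$ pure $1$-dimensional and $\dim\Cok(s)=0$. In the other direction, given such a stable pair one sets $E:=\Cone(s)\in\aA^p$ and checks $\sigma$-limit stability for $k\ll 0$ by classifying all subobjects $A\hookrightarrow E$ in $\aA^p$: the long exact cohomology sequence gives $\ch_0(A)\in\{0,-1\}$, the case $\ch_0(A)=0$ (so that $A$ is a sheaf supported in dimension $\le 1$) being excluded as a destabilizer by purity of $F$ together with $\dim\Cok(s)=0$, and the case $\ch_0(A)=-1$ by a direct phase estimate. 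Since this dictionary is compatible with base change, it upgrades to an isomorphism of moduli functors, hence of the algebraic spaces $\lL^{\sigma}_n(X,\beta)$ and $P_n(X,\beta)$ — the deformation--obstruction theories agree because both are those of the underlying object of $D^b(X)$ — and therefore $L_{n,\beta}(\sigma)=P_{n,\beta}$.

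For the chamber $k\gg 0$, I would reduce to the previous case via the derived dual $\mathbb{D}(-):=\dR\HOM(-,\oO_X)[2]$. One has $\ch(\mathbb{D}E)=(\ch_0(E),-\ch_1(E),\ch_2(E),-\ch_3(E))=(-1,0,\beta,-n)$ and $\det(\mathbb{D}E)=\oO_X$, and a direct computation shows that $\mathbb{D}$ intertwines $(k\omega+i\omega)$-limit stability with $(-k\omega+i\omega)$-limit stability after reversing the phase order — the complex conjugation implicit in $\dR\HOM$, consistently with contravariance — so the chamber $k\gg 0$ is carried to the chamber $k\ll 0$. One checks that $\mathbb{D}$ takes a $\sigma$-limit stable $E$ of the given class into $\aA^p$ (even though it need not preserve all of $\aA^p$): such an $E$ has $\hH^{-1}(E)=\oO_X$ and $\hH^0(E)=G$ pure $1$-dimensional with $\ch(G)=(0,0,\beta,n)$, and $\mathbb{D}E\cong\Cone(\oO_X\to G^{D})$ where $G^{D}:=\eE xt^2(G,\oO_X)$ is pure $1$-dimensional with $\ch(G^{D})=(0,0,\beta,-n)$. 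Applying the $k\ll 0$ dictionary to the class $(-1,0,\beta,-n)$, the object $E$ is $\sigma$-limit stable with $k\gg 0$ if and only if $(G^{D},\,\oO_X\to G^{D})$ is a stable pair with $\ch_3=-n$; hence $\mathbb{D}$ induces an isomorphism $\lL^{\sigma}_n(X,\beta)\cong P_{-n}(X,\beta)$ and $L_{n,\beta}(\sigma)=P_{-n,\beta}$.

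The main obstacle is the dictionary in the $k\ll 0$ chamber, i.e. showing that limit semistability cuts out exactly the stable-pair complexes. This needs a complete description of the sub- and quotient objects of the perverse heart $\aA^p$ via cohomology exact sequences, together with the asymptotic comparison of the polynomial central charges; it is precisely here that the sign of $k$ decides which of two competing sub-leading terms dominates, which is what produces the different answers for $k\ll 0$ and $k\gg 0$. A secondary difficulty is verifying, in the $k\gg 0$ step, that the dualizing functor is genuinely compatible with $\aA^p$ on the relevant locus and with the moduli functors, so that it yields an isomorphism of algebraic spaces rather than merely a bijection on closed points.
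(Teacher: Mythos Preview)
Your plan matches the paper's proof almost exactly: for $k\ll 0$ one identifies $\sigma$-limit stable objects of the given numerical type with stable pairs (this is the paper's Theorem~\ref{LiPT}, using Lemma~\ref{stabpair} to recognize stable pairs via $\dim\hH^0(E)=0$ and Lemma~\ref{prop:char} to check stability), and for $k\gg 0$ one applies the dualizing functor $\mathbb{D}$ together with its compatibility with limit stability (Lemma~\ref{compati}). Both spaces are then open in the same mother space $\mM_0^{\rm et}$, so the set-theoretic identification upgrades to an isomorphism of algebraic spaces, and Behrend's function transfers.

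One point where your sketch is imprecise: in the converse direction (stable pair $\Rightarrow$ $\sigma$-limit stable for $k\ll 0$), you write that a subobject $A\hookrightarrow E$ with $\ch_0(A)=0$ is ``excluded as a destabilizer by purity of $F$ together with $\dim\Cok(s)=0$.'' Purity alone is not what does the work. What you actually need is a \emph{uniform upper bound} on $\ch_3(A)$ over all such subobjects, so that $\mu_{i\omega}(A)$ is bounded and hence $\phi_\sigma(A)\prec\phi_\sigma(E)$ once $k$ is sufficiently negative. The paper obtains this bound via Lemma~\ref{asfor} (any such $A$ is an extension of a subsheaf of $\oO_C$ by a subsheaf of $\hH^0(E)$) together with the finiteness of $m(\beta)$ (Lemma~\ref{-infty}), which controls $\ch_3$ of quotients $\oO_{C'}$ of $\oO_C$. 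This is exactly the ``complete description of sub- and quotient objects'' you flag as the main obstacle, but the missing numerical input is the constant $m(\beta)$, not purity. Similarly, the facts that $E$ has no zero-dimensional subobjects and that $\oO_C$ is pure already follow from $E\in\aA^p_{1/2}$ (Lemma~\ref{lem:C}), independently of $k$; the role of $k\ll 0$ is solely to force $\hH^0(E)$ to be zero-dimensional. Finally, for $k\gg 0$ your explicit description of $\hH^{-1}(E)=\oO_X$ and $\hH^0(E)$ pure one-dimensional is correct but unnecessary: the paper simply invokes $\mathbb{D}$ and Lemma~\ref{compati} without unpacking the cohomology of the dual.
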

It seems that Theorem~\ref{main2} is related to the rationality conjecture 
of the generating function of the PT-invariants, 
$$Z_{\beta}^{\rm{PT}}(q)=\sum_{n\in \mathbb{Z}}P_{n, \beta}q^n
\in \mathbb{Q}\db[q\db].$$
It is proposed by Pandharipande and Thomas
in~\cite[Conjecture 3.2]{PT} and they conjecture that 
$Z_{\beta}^{\rm{PT}}(q)$ is a rational function of $q$, invariant 
under $q \mapsto 1/q$. 
This conjecture is solved when $\beta$ is an irreducible curve class
in~\cite{PT3}
by comparing $P_{n, \beta}$ and $P_{-n, \beta}$. 
In the following, we propose a conjectural wall-crossing 
formula of our invariants 
$L_{n, \beta}(\sigma)$, which combined with Theorem~\ref{main2}
 provides a relationship between 
$P_{n, \beta}$ and $P_{-n, \beta}$ in a general situation.
For $\mu\in \mathbb{Q}$, let $k_0=-\mu/2$
and $k_{-}<k_0$, $k_{+}>k_0$ are sufficiently close to $k_0$. 
We set $\sigma_{\ast}=k_{\ast}\omega +i\omega$ for 
$\ast=0, \pm$. 
\begin{conj}\label{conj:intro}
There is a virtual counting of one dimensional 
$\omega$-Gieseker semistable sheaves $F$ 
with $(\ch_2(F), \ch_3(F))=(\beta', n')$, denoted by 
$N_{n', \beta'}\in \mathbb{Q}$, such that 
\begin{align}\label{expect2}L_{n, \beta}(\sigma_{-})-L_{n, \beta}(\sigma_{+})=
\sum (-1)^{n'-1}n'N_{n', \beta'}L_{n'', \beta''}(\sigma_0).\end{align}
Here in the above sum, $(\beta', n')$, $(\beta'', n'')$ must
satisfy 
$\beta'+\beta''=\beta$, 
 $n'+n''=n$ and $n'/\omega\beta'=\mu$. 
\end{conj}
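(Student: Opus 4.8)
The plan is to derive (\ref{expect2}) from a wall-crossing identity in a Hall-algebra-type Lie algebra, following Joyce's theory of motivic and stack-function invariants together with the Joyce--Song refinement of the Kontsevich--Soibelman wall-crossing formula, adapted from $\Coh(X)$ to the category $\aA^p$ of perverse coherent sheaves. The first task is foundational: one needs the moduli stack of objects in $\aA^p$ to be an algebraic stack, locally of finite type with affine geometric stabilisers, and the substacks of $\sigma$-limit semistable objects of fixed numerical class to be of finite type (the latter boundedness should follow from the arguments proving Theorem~\ref{main1}). Since $X$ is a Calabi--Yau $3$-fold, the category $\aA^p$ has a Serre-duality pairing, $\Ext^i(E,F)^{\vee}\cong\Ext^{3-i}(F,E)$, so the Euler form $\chi(E,F)=\sum_i(-1)^i\dim\Ext^i(E,F)$ descends to an antisymmetric form on numerical classes; this is what upgrades the ``semiclassical'' structure on stack functions, twisted by Behrend's constructible function $\nu_L$, to a genuine Lie algebra on which the integration map is a homomorphism.

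Next I would establish the wall structure. For fixed $(\beta,n)$ and fixed $\mu$, the values $k_-<k_0=-\mu/2<k_+$ lie on opposite sides of a single wall, and the only change in limit (semi)stability of objects of class $(-1,0,\beta,n)$ across it is caused by pure one-dimensional $\omega$-Gieseker semistable sheaves $F$ of slope $\mu$ splitting off. Concretely, an $E$ that is $\sigma_-$-limit stable but not $\sigma_+$-limit semistable must be $\sigma_0$-limit semistable and fit into a non-split exact sequence in $\aA^p$ of the form $0\to E''\to E\to\bigoplus_j F_j\to 0$ (with the roles reversed on the other side), where each $F_j$ is $\omega$-Gieseker semistable one-dimensional of slope $\mu$ and $E''$ is $\sigma_0$-limit semistable of a class $(-1,0,\beta'',n'')$ whose $\sigma_0$-slope equals that of the $F_j$. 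Encoding the families of such $E$, $E''$ and $F_j$ as stack functions, I would derive an identity expressing the difference of the characteristic elements attached to $\sigma_-$- and $\sigma_+$-limit semistable objects of class $(-1,0,\beta,n)$ as a finite sum (finiteness by boundedness) of iterated Hall products of the characteristic elements of one-dimensional Gieseker semistable sheaves and of $\sigma_0$-limit semistable objects, with the universal combinatorial coefficients of Joyce's transformation law.

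Finally I would apply the integration map $\Psi$ from stack functions to the $\nu_L$-twisted Lie algebra. Applied to the one-dimensional-sheaf factors, $\Psi$ produces the generalised Donaldson--Thomas invariants $N_{n',\beta'}\in\mathbb{Q}$ of $\omega$-Gieseker semistable sheaves, defined via a logarithm of the generating series of the stacky invariants of semistable sheaves; it is exactly this logarithm that collapses the multi-sheaf terms $\bigoplus_j F_j$ into the single factor $N_{n',\beta'}$ in (\ref{expect2}). Each Lie bracket contributes a factor $(-1)^{\chi(F,E'')-1}\chi(F,E'')$, and a Hirzebruch--Riemann--Roch computation, using $c_1(X)=0$, gives $\chi(F,E'')=\ch_3(F)=n'$ for $F$ of class $(0,0,\beta',n')$ and $E''$ of class $(-1,0,\beta'',n'')$; this reproduces the coefficient $(-1)^{n'-1}n'$. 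Restricting to the components with $\det=\oO_X$ and imposing $\beta'+\beta''=\beta$, $n'+n''=n$ and $n'/\omega\beta'=\mu$ then yields (\ref{expect2}).

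The main obstacle is the foundational input on the left-hand side: one must prove the Behrend-function identities of Joyce--Song --- that $\nu_L$ is suitably multiplicative and behaves well in families of extensions --- for objects of $\aA^p$, and not merely for coherent sheaves, so that $\Psi$ is a Lie algebra homomorphism in this context. This requires presenting the moduli stack of objects in $\aA^p$ locally as a critical locus (a Chern--Simons, i.e.\ $(-1)$-shifted symplectic, description), which is understood for $\Coh(X)$ but not yet for perverse coherent sheaves; this gap is why the result is recorded as a conjecture rather than a theorem. A further, more technical difficulty is to confirm that the relevant wall is genuinely isolated and governed by the single clean splitting above --- ruling out interference between distinct slopes and controlling $\sigma_0$-strictly-semistable objects --- and to verify that every moduli stack entering the Hall-algebra manipulation is of finite type, so that all the sums involved are finite.
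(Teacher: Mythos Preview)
The statement you are addressing is a \emph{conjecture}, and the paper does not supply a proof. What the paper does provide is a heuristic derivation in Section~\ref{sub:wall}: using the destabilising sequences of Proposition~\ref{wall-cross}, it sets up the diagram over $M_{n'}(X,\beta')\times\lL^{\sigma_0}_{n''}(X,\beta'')$, computes $\chi(F,G)=n'$ to motivate the coefficient $(-1)^{n'-1}n'$, and then points to Joyce's motivic invariants and the Joyce--Song/Kontsevich--Soibelman machinery as the expected framework for a rigorous proof, while noting that the Behrend-function identities for objects of $\aA^p$ are not yet established. The conjecture is then checked in the examples of Section~\ref{section:Ex}.

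Your proposal is therefore not to be compared against a proof in the paper --- there is none --- but against this heuristic discussion, and with it your outline is entirely consonant. You invoke exactly the Hall-algebra/integration-map formalism the paper anticipates, you recover the coefficient $(-1)^{n'-1}n'$ by the same Riemann--Roch computation, and you correctly isolate the foundational gap (Behrend-function identities and a critical-locus presentation for moduli of objects in $\aA^p$ rather than $\Coh(X)$) that is precisely why the statement is recorded as a conjecture. Your caveat about isolating the wall and ensuring finite-type hypotheses is also appropriate; the paper handles the former only at the level of the two-step Harder--Narasimhan filtrations in Proposition~\ref{wall-cross} and does not attempt the Hall-algebra bookkeeping for strictly semistable $F$.
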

See Paragraph~\ref{sub:wall} for the explanation of the above conjecture. 
In Section~\ref{section:Ex}, we investigate the wall-crossing 
phenomena of limit stable objects and study Conjecture~\ref{conj:intro}
in some examples.

\subsection{Acknowledgement}
The author thanks J.~Li and R.~Pandharipande 
for useful conversations at the conference 
``Integrable Systems and Mirror symmetry"
on January 2008 at Kyoto.
He also thanks R.~Thomas for nice comments, 
and the referee for pointing out an easier 
proof of Lemma~\ref{-infty}. 
While the author was preparing the manuscript, 
the notion of stability conditions on perverse 
coherent sheaves was introduced by
A.~Bayer~\cite{Bay} in more general situation independently. 
He thanks E.~Macr\`i for the information of 
Bayer's work.  
He is partially supported by Japan Society for the 
Promotion of Sciences Research Fellowship for Young Scientists, 
No. 198007.

\subsection{Notation and convention}
We work over varieties over $\mathbb{C}$. 
For a variety $X$, 
we denote by $D^b(X)$, $K(X)$ the bounded derived category of 
coherent sheaves on $X$, the Grothendieck group of coherent 
sheaves respectively. 
For a triangulated category $\dD$ and a set of
subobjects $S\subset \dD$, we denote by 
$\langle S \rangle \subset \dD$
 the smallest extension closed subcategory 
 which contains $S$. 
 If $S$ is a set of subobjects in an abelian category $\aA$, 
 we also use the same notation $\langle S \rangle \subset \aA$.

\section{Limit stability}\label{section:limit}
\subsection{Bridgeland's stability conditions}
Here we briefly review the definition of Bridgeland's stability 
conditions~\cite{Brs1}.  
Let us begin with the stability conditions on abelian categories. 
\begin{defi}\emph{\bf{\cite{Brs1}}}\emph{
Let $\aA$ be an abelian category. A \textit{stability function} on 
$\aA$ is a group homomorphism, 
$$Z\colon K(\aA) \lr \mathbb{C}, $$
such that for any non-zero $E\in \aA$, we have 
$$Z(E) \in 
\mathbb{H} \cneq \{ r\exp(i\pi \phi) \mid r>0, 0<\phi \le 1 \}.$$}
\end{defi}
Given a non-zero object $E\in \aA$ and a 
 stability function $Z\colon K(\aA) \to \mathbb{C}$, 
we can uniquely determine the \textit{phase} of $E$ by 
$$\phi(E) = \frac{1}{\pi}\Imm \log Z(E) \in (0, 1].$$
We say $E\in \aA$ is $Z$-\textit{semistable} if for any non-zero 
subobject $F\subset E$ in $\aA$, we have 
$$\phi(F) \le \phi(E).$$
\begin{defi}\emph{\bf{\cite{Brs1}}}\emph{
A stability function $Z\colon K(\aA) \to \mathbb{C}$ is called 
a \textit{stability condition} on $\aA$ if for any $E\in \aA$, there 
exists a filtration 
$$0=E_0 \subset E_1 \subset \cdots \subset E_n=E, $$
such that each $F_i=E_i/E_{i-1}$ is $Z$-semistable with 
$$\phi(F_1)>\phi(F_2) > \cdots > \phi(F_n).$$
The above filtration is called a \textit{Harder-Narasimhan filtration}.}
\end{defi}
It is easy to construct examples of stability conditions
if $\aA$ has finite number of simple objects 
$S_1, \cdots, S_N \in \aA$
such that 
$$\aA=\langle S_1, \cdots, S_N \rangle.$$
e.g. $\aA=\modu A$ for a finite dimensional $k$-algebra $A$. 
In this case $K(\aA)$ is generated by $[S_i]\in K(\aA)$, and 
$Z\colon K(\aA) \to \mathbb{C}$ is a stability condition if 
and only if $Z(S_i) \in \mathbb{H}$ for all $i$.

In general, a sufficient condition for 
a stability function 
to be a stability condition is provided in~\cite[Proposition 2.4]{Brs1}. 
\begin{prop}\emph{\bf{\cite[Proposition 2.4]{Brs1}}}\label{suff}
Let $Z\colon K(\aA)\to \mathbb{C}$ be a stability function. 
Assume that 
\begin{itemize}
\item there is no infinite sequence of inclusions in $\aA$, 
\begin{align}\label{inf1}
 \cdots \hookrightarrow E_{n} \hookrightarrow 
\cdots \hookrightarrow E_1 \hookrightarrow E_0, \end{align}
with $\phi(E_{i+1})>\phi(E_i)$ for all $i$.
\item there is no infinite sequence of 
surjections in $\aA$, 
\begin{align}\label{inf2}
E_0 \twoheadrightarrow E_1 
\twoheadrightarrow \cdots \twoheadrightarrow 
E_n \twoheadrightarrow \cdots, \end{align}
with $\phi(E_i)>\phi(E_{i+1})$ for all $i$. 
\end{itemize}
Then $Z$ is a stability condition. 
\end{prop}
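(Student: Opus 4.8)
The plan is to prove Bridgeland's sufficiency criterion (Proposition~2.4) by constructing a Harder--Narasimhan filtration for an arbitrary nonzero $E\in\aA$ directly from the hypotheses. First I would isolate the notion of a \emph{maximally destabilizing subobject}: given nonzero $E\in\aA$, call a subobject $A\subset E$ maximally destabilizing if (i) $\phi(B)\le\phi(A)$ for every nonzero subobject $B\subset E$, and (ii) among all subobjects achieving the maximal phase $\phi(A)$, $A$ is the largest, i.e. any $B\subset E$ with $\phi(B)=\phi(A)$ satisfies $B\subset A$. If such an $A$ always exists, the HN filtration follows by a standard induction: $A$ is itself $Z$-semistable (any subobject of $A$ is a subobject of $E$ of phase $\le\phi(A)$), the quotient $E/A$ has strictly smaller maximal phase than $\phi(A)$ by a short argument on the phases of extensions, and iterating on $E/A$ terminates because the sequence of maximal phases is strictly decreasing and — here is where finiteness enters — cannot decrease infinitely often, which one extracts from the no-infinite-surjections hypothesis \eqref{inf2}.

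So the real content is the existence of a maximally destabilizing subobject, and I expect \emph{that} to be the main obstacle. The approach is a two-step extraction. Step one: show that the set of phases $\{\phi(B): 0\ne B\subset E\}$ attains a maximum. If not, pick $B_1\subset E$, then $B_2\subset E$ with $\phi(B_2)>\phi(B_1)$, and so on; but to feed hypothesis \eqref{inf1} I need an \emph{increasing chain}, so I would instead replace $B_{i}$ by $B_{i-1}+B_i'$ where $\phi(B_i')>\phi(B_{i-1})$, using the seesaw-type inequality that if $\phi(B')>\phi(B_{i-1})$ then $\phi(B_{i-1}+B_i')>\phi(B_{i-1})$ (this comes from the sum formula for $Z$ on the short exact sequences relating $B_{i-1}$, $B_i'$ and $B_{i-1}+B_i'$ via $B_{i-1}\cap B_i'$, together with the fact that $Z$ takes values in the upper half plane so phases of extensions lie between those of sub and quotient). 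That produces a strictly increasing chain of subobjects of $E$ with strictly increasing phases, contradicting \eqref{inf1}. Let $\phi_{\max}$ be the maximal phase. Step two: among subobjects $B\subset E$ with $\phi(B)=\phi_{\max}$, produce a maximal one. If $B,B'$ both have phase $\phi_{\max}$, then $B+B'$ also has phase $\phi_{\max}$ (again the seesaw inequality forces $\phi(B+B')=\phi_{\max}$, since it is $\ge\phi_{\max}$ by the extension argument and $\le\phi_{\max}$ by maximality). So the family of phase-$\phi_{\max}$ subobjects is closed under finite sums; if it had no maximal element we could build a strictly increasing chain inside it, once more contradicting \eqref{inf1}. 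The maximal element is the desired $A$.

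Then I would run the induction carefully. Having built $A\subset E$ maximally destabilizing with $\phi(A)=\phi_{\max}(E)$, set $E':=E/A$ and first check $\phi_{\max}(E')<\phi_{\max}(E)$: if $\bar B\subset E'$ had $\phi(\bar B)\ge\phi_{\max}(E)$, its preimage $B\subset E$ sits in $0\to A\to B\to\bar B\to 0$, whence $\phi(B)\ge\min(\phi(A),\phi(\bar B))=\phi_{\max}(E)$ and $B\supsetneq A$ has phase $\phi_{\max}(E)$, contradicting maximality of $A$ unless $\bar B=0$. Now apply the construction to $E'$, getting $A'\subset E'$; let $E_2\subset E$ be its preimage, so $A=E_1\subset E_2\subset E$. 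Iterating yields $0=E_0\subset E_1\subset E_2\subset\cdots$ with $E_i/E_{i-1}$ $Z$-semistable and $\phi(E_1/E_0)>\phi(E_2/E_1)>\cdots$. The last thing to verify is termination: the chain of quotients $E/E_1\twoheadrightarrow E/E_2\twoheadrightarrow\cdots$ is a chain of surjections, and the phases $\phi_{\max}(E/E_i)$ are strictly decreasing; if the process never stopped, a standard argument shows one can refine to an infinite strictly-phase-decreasing chain of surjections as in \eqref{inf2} — the cleanest route is to observe that the induced maps $E/E_i\to E/E_{i+1}$ are strict epimorphisms with $\phi_{\max}$ strictly dropping, which is exactly the forbidden configuration after passing to the maximal destabilizing quotients. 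Hence the filtration is finite, it is by definition a Harder--Narasimhan filtration, and $Z$ is a stability condition. The two delicate points to write out in full are the seesaw inequality for phases of sub/quotient (elementary but needs the $\mathbb H$-valued hypothesis used correctly, noting $\phi\in(0,1]$) and the termination argument, which is where \eqref{inf2} is genuinely needed — \eqref{inf1} alone gives existence of maximal destabilizers but not finiteness of the filtration.
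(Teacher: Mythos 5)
Your overall architecture (maximal destabilizing subobject, induction on the quotient, termination via the quotient chain condition) is the standard one, and the induction and termination steps are fine; but the existence argument for the maximal destabilizing subobject --- which you rightly identify as the real content --- has two genuine gaps. First, the ``seesaw inequality'' you use in Step one is false: for subobjects $B,B'\subset E$, $\phi(B')>\phi(B)$ does \emph{not} imply $\phi(B+B')>\phi(B)$. The culprit is the intersection: $(B+B')/B\cong B'/(B\cap B')$, and if $B\cap B'$ has large phase then $B'/(B\cap B')$ can have phase far below $\phi(B)$, dragging $\phi(B+B')$ below $\phi(B)$. Concretely, for the quiver $1\to 2\leftarrow 3$, the representation $E=(k\to k\leftarrow k)$ with both maps isomorphisms, $B=(k\to k\leftarrow 0)$, $B'=(0\to k\leftarrow k)$ (so $B+B'=E$, $B\cap B'=S_2$), and $Z(S_1)=2+\tfrac{1}{2}i$, $Z(S_2)=-2+\tfrac{1}{2}i$, $Z(S_3)=1+\tfrac{1}{10}i$, one gets $\phi(B')\approx 0.83>\phi(B)=\tfrac{1}{2}>\phi(B+B')\approx 0.27$. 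Your Step two escapes this because there you also know $\phi(B\cap B')\le\phi_{\max}=\phi(B')$, which forces $\phi\bigl(B'/(B\cap B')\bigr)\ge\phi_{\max}$; in Step one you have no control on $\phi(B\cap B_i')$, and the strictly-increasing-phase chain you want never gets built. The standard repair is to first use (\ref{inf1}) to replace arbitrary subobjects by \emph{semistable} ones of no smaller phase (a descending chain of destabilizing subobjects is exactly what (\ref{inf1}) forbids), so that intersections inside them are controlled; this is in essence what Bridgeland's cited proof does, working dually with semistable quotients of minimal phase.

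Second, you invoke (\ref{inf1}) against a strictly \emph{increasing} chain of subobjects with increasing phases, but (\ref{inf1}) forbids infinite \emph{decreasing} chains $E_0\supset E_1\supset\cdots$ with $\phi(E_0)<\phi(E_1)<\cdots$; an ascending chain with ascending (or constant) phases is not excluded by either hypothesis as stated. In Step two this is repairable: an ascending chain $A_1\subsetneq A_2\subsetneq\cdots$ of subobjects all of phase $\phi_{\max}>\phi(E)$ has $\phi(A_{i+1}/A_i)=\phi_{\max}$ by the equal-phase seesaw, hence yields surjections $E/A_1\twoheadrightarrow E/A_2\twoheadrightarrow\cdots$ with strictly decreasing phases, contradicting (\ref{inf2}) rather than (\ref{inf1}). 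So the correct division of labour is: (\ref{inf1}) produces semistable subobjects dominating any given one, while (\ref{inf2}) gives both the maximality in Step two and the finiteness of the filtration; your closing claim that ``(\ref{inf1}) alone gives existence of maximal destabilizers'' is therefore not right. With these two points corrected the argument can be pushed through, but as written the existence of the maximally destabilizing subobject --- the heart of the proposition --- is not established.
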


Next let $\dD$ be a triangulated category, e.g. 
$\dD=D^b(X)$ for a variety $X$. 
The following is the definition of Bridgeland's stability 
conditions. 
\begin{defi}\emph{\bf{\cite{Brs1}}}\label{def:stab}\emph{
A stability condition on $\dD$ consists of data $(Z, \aA)$, 
where $\aA \subset \dD$ is the heart of a bounded t-structure on 
$\dD$, and $Z$ is a stability condition on $\aA$. 
}\end{defi}

\begin{rmk}\emph{
A stability condition on $\dD$
in~\cite{Brs1} is originally given by 
data $(Z, \pP)$, where 
$Z\colon K(\dD) \to \mathbb{C}$ is a group homomorphism, 
and $\pP(\phi)\subset \dD$ for $\phi \in \mathbb{R}$ are full subcategories, 
satisfying some axioms. However as shown in~\cite[Proposition 4.2]{Brs1},  
this is equivalent to giving data $(Z, \aA)$ as in Definition~\ref{def:stab}.}
\end{rmk}
\begin{rmk}\emph{
Let $Z \colon K(\aA) \to \mathbb{C}$ be a stability 
condition on an abelian category $\aA$. 
Then the pair $(Z, \aA)$ is a stability condition on 
the triangulated category $D^b(\aA)$. }
\end{rmk}
Let $\dD=D^b(X)$ for a smooth projective variety. 
A stability condition $(Z, \aA)$ on $\dD$ is called 
\textit{numerical}
if $Z\colon K(X) \to \mathbb{C}$ factors through the Chern character 
map, 
$$\xymatrix{
K(X) \ar[r]^{Z} \ar[d]_{\ch} & \mathbb{C}. \\
H^{\ast}(X, \mathbb{Q}) \ar[ru] }$$
The set of numerical
 stability conditions 
on $\dD=D^b(X)$ which satisfy the \textit{local finiteness}
(cf.~\cite[Definition 5.7]{Brs1}) is denoted by $\Stab(X)$. 
In~\cite[Theorem 1.2]{Brs1},
Bridgeland shows that $\Stab(X)$ has a
structure of a complex manifold.

The space $\Stab(X)$ is studied when $\dim X=1, 2$ 
in the articles~\cite{Brs1}, \cite{Brs2}. Unfortunately, 
we do not know how to construct examples of stability conditions 
for higher dimensional varieties. It seems that the following 
lemma is well-known, but we put it to emphasize that 
the construction problem is non-trivial. 
\begin{lem}
Let $X$ be a smooth projective variety
with $d=\dim X \ge 2$.  
Then there is no numerical stability condition 
$(Z, \aA)$ on $D^b(X)$ with $\aA=\Coh(X)$. 
\end{lem}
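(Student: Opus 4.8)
The plan is to argue by contradiction, using the fact that a numerical stability function on $\Coh(X)$ must assign values in the upper half plane $\mathbb{H}$ to \emph{every} nonzero sheaf, including all skyscraper sheaves and all line bundles and their twists. First I would suppose $(Z,\Coh(X))$ is a numerical stability condition, so $Z$ factors as $Z(E) = f(\ch(E))$ for a $\mathbb{Q}$-linear map $f\colon H^{\ast}(X,\mathbb{Q}) \to \mathbb{C}$. The key observation is a dimension count on the relevant Chern characters: for a point $x \in X$ the structure sheaf $\oO_x$ has $\ch(\oO_x) = [\mathrm{pt}] \in H^{2d}(X,\mathbb{Q})$, so $Z(\oO_x) = f([\mathrm{pt}])$ is a fixed nonzero element of $\mathbb{H}$, hence $\Imm Z(\oO_x) \ge 0$ with equality only if $Z(\oO_x) \in \mathbb{R}_{<0}$.

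Next I would test $Z$ against a suitably chosen one-parameter family of sheaves whose Chern characters have the top-degree term growing faster than the lower-degree terms, so that the sign of $\Imm Z$ (or the position of the phase) is eventually forced to contradict the sign coming from $\oO_x$. The cleanest choice is to fix a very ample line bundle $\oO_X(1)$ and consider, for $m \gg 0$, either the twists $\oO_X(-m)$ or structure sheaves $\oO_Z$ of zero-dimensional subschemes of length $m$, or more flexibly the sheaves $\oO_X(-m)$ versus $\oO_x$. Since $\ch(\oO_X(-m)) = e^{-mH}$ with $H = c_1(\oO_X(1))$, the component in $H^{2d}$ is $(-m)^d H^d/d!$, whose sign alternates with the parity of $d$ and whose magnitude dominates all lower Chern character components as $m \to \infty$. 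Feeding this into $Z$, the vector $Z(\oO_X(-m))$ is asymptotically a positive multiple of $(-1)^d f(H^d \cdot [\mathrm{pt}]/d!) = (-1)^d Z(\oO_x)$ up to normalization; for suitable parity of $d$ this lies in the lower half plane, contradicting $Z(\oO_X(-m)) \in \mathbb{H}$, and for the other parity one instead plays $\oO_X(-m)$ against a subsheaf inclusion to violate the phase inequality required for some Harder--Narasimhan piece. Because $d = \dim X \ge 2$, there is always enough room between the top degree $2d$ and degree $2$ to separate these growth rates and run the argument; this is exactly where the hypothesis $d \ge 2$ enters.

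The main obstacle I anticipate is organizing the case analysis cleanly: depending on the parity of $d$ and on the values $f$ takes on the various graded pieces of $H^{\ast}(X,\mathbb{Q})$, one must decide whether the contradiction comes directly from $Z(E) \notin \mathbb{H}$ for a single sheaf $E$, or from comparing phases of a sheaf and a subsheaf (e.g.\ $\oO_X(-m-1) \hookrightarrow \oO_X(-m)$, or an ideal sheaf $I_Z \hookrightarrow \oO_X$). A slicker route that avoids casework is to exhibit, for some $m$, two sheaves or a sub/quotient pair realizing a phase inversion incompatible with the existence of Harder--Narasimhan filtrations; alternatively, one can invoke the standard fact that a numerical central charge on $\Coh(X)$ must, by positivity on points and an asymptotic analysis of Hilbert polynomials, be a positive multiple of $-\int \ch(-) \cdot U$ for some class $U$ with $U_0 > 0$, and then observe that such a $Z$ sends $\oO_X(-m)$ out of $\mathbb{H}$ for $m \gg 0$ once $d \ge 2$. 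I would present the argument in the second form, stating the asymptotic lemma on $Z$ first and then deriving the contradiction in one line, since that keeps the proof short and self-contained.
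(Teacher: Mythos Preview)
Your asymptotic argument with $\oO_X(-m)$ is clean and correct when $d$ is odd: since $\ch(\oO_X(-m))=e^{-mH}$, the top term $\frac{(-m)^d}{d!}H^d$ dominates, so $Z(\oO_X(-m))$ is asymptotically a \emph{negative} real multiple of $Z(\oO_x)\in\mathbb{H}$, hence leaves $\mathbb{H}$ for $m\gg 0$. But for even $d$ (in particular for surfaces, $d=2$) the same computation gives a \emph{positive} multiple of $Z(\oO_x)$, and there is no contradiction. You acknowledge this, but neither of your proposed fixes actually closes the gap. The phase comparison $\oO_X(-m-1)\hookrightarrow\oO_X(-m)$ does not obviously produce an illegal phase inversion, since both values tend to the same ray; and the ``standard fact'' you invoke about the shape of numerical central charges on $\Coh(X)$ is not a standard fact---it is essentially what one is trying to establish, and in any case the claimed consequence (that $Z(\oO_X(-m))\notin\mathbb{H}$ for $m\gg 0$) is exactly what fails for even $d$.

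The paper's proof avoids the parity issue entirely by first \emph{reducing to $d=2$}: choose a smooth surface $S\hookrightarrow X$ and note that $K(S)\to K(X)\xrightarrow{Z}\mathbb{C}$ is again a numerical stability function on $\Coh(S)$. Then on a surface one uses sheaves supported on curves rather than line bundles on $X$: for a smooth curve $C\subset X$ and a divisor $D$ on $C$ of arbitrary degree, the constraint $\Imm Z(\oO_C(D))\ge 0$ for all $\deg D$ forces $v_2=0$ (in the notation $Z=\sum(u_j+iv_j)\ch_j$); then $\Imm Z(\oO_X(mC))\ge 0$ for all $m$ forces $v_1\cdot[C]=0$; now $\Imm Z(\oO_C(D))=0$, so $\Ree Z(\oO_C(D))<0$ for all $\deg D$, which similarly forces $u_2=0$. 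This yields $Z(\oO_x)=u_2+iv_2=0$, the contradiction. The key move you are missing is to test $Z$ on sheaves $\oO_C(D)$ whose Chern character has only \emph{two} nonzero graded pieces (in degrees $2d-2$ and $2d$), so that varying $\deg D$ in both directions pins down the top coefficient directly, with no parity obstruction. Your line bundles $\oO_X(-m)$ have all graded pieces nonzero, and the even-degree leading term cannot be killed this way.
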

\begin{proof}
It is enough to show that there is no stability 
function $Z\colon K(X) \to \mathbb{C}$ on $\Coh(X)$ of the 
following form, 
$$Z(E)=\sum_{j=0}^d (u_j+iv_j)\ch_j(E),$$
for $u_j+iv_j \in H^{2d-2j}(X, \mathbb{C})$. 
Suppose that such a stability function $Z$ exists. 
Since $d\ge 2$, there is a smooth subvariety 
$S\stackrel{i}{\hookrightarrow} X$ with $\dim S=2$. Then the composition 
$$K(S) \stackrel{i_{\ast}}{\lr} K(X) \stackrel{Z}{\lr} \mathbb{C}, $$
is a stability function on $\Coh(S)$, hence we may 
assume $d=2$. Let $C\subset X$ be a smooth curve
and take a divisor $D$ on $C$.  
Since $\Imm Z(E) \ge 0$ for any $E\in \Coh(X)$, we have 
$$\Imm Z(\oO_C(D))=v_2(\deg D +\ch_2(\oO_C))+v_1 \cdot [C] \ge 0.$$
Since we can take $D$ with an arbitrary degree, 
we must have $v_2=0$. Similarly 
we have 
$$\Imm Z(\oO_X(mC))=mv_1 \cdot [C] +v_0 \ge 0,$$
for any $m\in \mathbb{Z}$, hence $v_1 \cdot [C]=0$. 
Therefore $\Imm Z(\oO_C(D))=0$, and this implies
$$\Ree Z(\oO_C(D))=
u_2(\deg D +\ch_2(\oO_C))+u_1 \cdot [C] \le 0,$$
since $Z(\oO_C(D)) \in \mathbb{H}$. 
Then the same argument shows that $u_2=0$, and this implies
$$Z(\oO_x)=u_2+iv_2 =0,$$
for any closed point $x\in X$. This contradicts that 
$Z(\oO_x) \in \mathbb{H}$. 
\end{proof}

\begin{rmk}\label{rmk:find}\emph{
In the case of $\dim X=2$, the examples of 
stability conditions $(Z, \aA)$ are constructed in~\cite{Brs2}, \cite{AB}
by setting $\aA$ to be the tilting of $\Coh(X)$ with respect
to certain torsion pairs.
When $X$ is a K3 surface, the stability function $Z$ is given by 
$$Z_{(B, \omega)}(E)=-\int e^{-(B+i\omega)}\ch(E) \sqrt{\td_X}, $$
for $B+i\omega \in H^2(X, \mathbb{C})$ with $\omega$ an ample
class. When $\dim X\ge 3$ we expect that for
 $\omega \gg 0$, there are hearts of 
bounded t-structures $\aA_{(B, \omega)}$ such that the pairs 
$(Z_{(B, \omega)}, \aA_{(B, \omega)})$ determine stability conditions, 
giving the neighborhood of the large volume limits.  
However at this time, we are not able to find such 
$\aA_{(B, \omega)}$. 
}
\end{rmk}

\subsection{Perverse coherent sheaves on Calabi-Yau 3-folds}
From this paragraph, we focus on the case that 
$X$ is a Calabi-Yau 3-fold, i.e. $X$ is a smooth projective 
3-fold with a trivial canonical class. 
Here we study the heart of a bounded t-structure $\aA^p$, 
constructed as one of the perverse t-structures introduced by 
Bezrukavnikov~\cite{Bez} and Kashiwara~\cite{Kashi}.
In the notation of Remark~\ref{rmk:find}, the desired category 
$\aA_{(B, \omega)}$ should be constructed as an approximation of 
our category $\aA^p$, so we hope that
studying $\aA^p$ in detail will solve the construction problem in a 
future. Let us recall the notion of 
torsion pairs and their tilting for the construction of $\aA^p$. 

\begin{defi}\emph{Let $\aA$ be an abelian category. 
A \textit{torsion pair} on $\aA$ is a pair of full subcategories 
$(\tT, \fF)$ such that
\begin{itemize}
\item For $T\in \tT$ and $F\in \fF$, we have $\Hom(T, F)=0$. 
\item For any $E\in \aA$, there is an exact sequence 
$0 \to T \to E \to F \to 0$ in $\aA$ such that 
$T\in \tT$, $F\in \fF$. 
\end{itemize}
}
\end{defi}
Given a torsion pair $(\tT, \fF)$ on $\aA$, 
the following subcategory of $D^b(\aA)$,  
\begin{align*}
\aA^{\dag} 
&=\langle \fF[1], \tT \rangle \\
&=\{ E\in D^b(\aA) \mid \hH^{-1}(E)\in \fF, \hH^0(E) \in \tT, 
\hH^i(E)=0 \mbox{ for }i\neq -1, 0\}, 
\end{align*}
is known to be the heart of a bounded t-structure on $D^b(\aA)$, 
and it is called a \textit{tilting}
with respect to the torsion pair $(\tT, \fF)$. (cf.~\cite{HRS}.)
For a Calabi-Yau 3-fold $X$, we have the 
following torsion pair.  

\begin{lem}\emph{
The pair $(\Coh_{\le 1}(X), \Coh_{\ge 2}(X))$,
\begin{align*}
\Coh_{\le 1}(X) & \cneq \{ E\in \Coh(X) \mid \dim \Supp(E) \le 1\}, \\
\Coh_{\ge 2}(X) & \cneq \{ E\in \Coh(X) \mid \Hom(F, E)=0 
\mbox{ for any }
F\in \Coh_{\le 1}(X)\},
\end{align*}
is a torsion pair of $\Coh(X)$.}
\end{lem}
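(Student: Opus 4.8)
The plan is to verify the two torsion-pair axioms directly from the definitions. First I would check the Hom-vanishing: given $T\in\Coh_{\le 1}(X)$ and $F\in\Coh_{\ge 2}(X)$, the equality $\Hom(T,F)=0$ is literally the defining condition of $\Coh_{\ge 2}(X)$, so this axiom is essentially free once the definition is unwound. The only thing worth remarking is that $\Coh_{\le 1}(X)$ is closed under quotients (a quotient of a sheaf supported in dimension $\le 1$ is again supported in dimension $\le 1$), which guarantees that $\Coh_{\ge 2}(X)$ as defined is the genuine ``right orthogonal'' torsion-free part rather than something smaller.

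The substantive point is the existence of the torsion sequence: for any $E\in\Coh(X)$ I must produce an exact sequence $0\to T\to E\to F\to 0$ in $\Coh(X)$ with $T\in\Coh_{\le 1}(X)$ and $F\in\Coh_{\ge 2}(X)$. The natural candidate for $T$ is the maximal subsheaf of $E$ supported in dimension $\le 1$. I would construct it as follows: since $X$ is Noetherian and $E$ is coherent, the set of subsheaves of $E$ lying in $\Coh_{\le 1}(X)$ has a maximal element $T$ (any ascending chain of such subsheaves stabilizes, being a chain of subsheaves of the Noetherian sheaf $E$, and the union of two subsheaves in $\Coh_{\le 1}(X)$ is again in $\Coh_{\le 1}(X)$ since it is a quotient of their direct sum). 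Set $F=E/T$. It remains to show $F\in\Coh_{\ge 2}(X)$, i.e. $\Hom(G,F)=0$ for every $G\in\Coh_{\le 1}(X)$. Given a map $G\to F$, its image $G'\subset F$ lies in $\Coh_{\le 1}(X)$ (quotient of $G$), and its preimage $\widetilde{T}\subset E$ under $E\twoheadrightarrow F$ sits in an extension $0\to T\to\widetilde T\to G'\to 0$; since $\Coh_{\le 1}(X)$ is closed under extensions, $\widetilde T\in\Coh_{\le 1}(X)$, so by maximality of $T$ we get $\widetilde T=T$, hence $G'=0$ and the map $G\to F$ was zero.

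The main obstacle, such as it is, is organizing the maximality argument cleanly — in particular observing that $\Coh_{\le 1}(X)$ is a Serre subcategory of $\Coh(X)$ (closed under subobjects, quotients, and extensions), which is what makes both the existence of the maximal torsion subsheaf and the torsion-freeness of the quotient go through. This is standard dimension-of-support bookkeeping: for an extension $0\to A\to B\to C\to 0$ one has $\Supp(B)=\Supp(A)\cup\Supp(C)$, and for a subquotient the support only shrinks. No genuinely hard input is needed; the content is entirely formal once these closure properties are in hand, so I would present it compactly rather than belaboring the support computations.
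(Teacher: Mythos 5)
Your proof is correct and follows essentially the same route as the paper: the first axiom is definitional, and the torsion sequence is obtained by taking the maximal subsheaf $T\subset E$ lying in $\Coh_{\le 1}(X)$ (which exists by noetherianity) and checking that $E/T$ lands in $\Coh_{\ge 2}(X)$. You spell out the maximality-plus-preimage argument that the paper leaves as ``it is easy to see,'' but there is no difference in substance.
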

\begin{proof}
For an object $E\in \Coh(X)$
there is an exact sequence, 
$$0 \lr T \lr E \lr F\lr 0, $$
such that 
$T\in \Coh_{\le 1}(X)$ and $\dim \Supp(F)\ge 2$. Since $\Coh(X)$ is 
a noetherian abelian category, we can take $T$ to be maximum, i.e. 
there is no $T' \in \Coh_{\le 1}(X)$
with $T\subsetneq T' \subset E$. 
Then it is easy to see that $F\in \Coh_{\ge 2}(X)$. 
\end{proof}
Our abelian category $\aA^p$ is constructed as a tilting. 
\begin{defi}\emph{
We define the \textit{heart of a perverse t-structure}
$\aA^p \subset D^b(X)$ to be the tilting with respect to the torsion pair
$(\Coh_{\le 1}(X), \Coh_{\ge 2}(X))$, i.e. 
$$\aA^p =\langle \Coh_{\ge 2}(X)[1], \Coh_{\le 1}(X) \rangle.$$
}
\end{defi}
\begin{rmk}\emph{
In general, a perverse t-structure introduced in~\cite{Bez}, \cite{Kashi}
is
determined by choosing a perversity function, which is 
a map 
$p\colon X^{\rm top} \to \mathbb{Z}$ satisfying 
a certain condition. One can 
easily check that our category $\aA^p$ corresponds to 
 the following perversity function, 
\begin{align*}
p(x)= \left\{ \begin{array}{cc}
-1 & \quad \dim \oO _{X, x}\le 1, \\
0 & \quad \dim \oO_{X, x} \ge 2. \end{array}
 \right. 
\end{align*}}
\end{rmk}
\begin{rmk}\label{rmk:art}\emph{
The subcategory $\Coh_{\le 1}(X) \subset \aA^p$ is
easily seen to be closed under quotients and subobjects, 
hence it is an abelian subcategory.
Since $\Coh_{\le 1}(X)$ is not artinian, the abelian category 
$\aA^p$ is also not artinian. 
 }
\end{rmk}
\begin{rmk}\label{rmk:noe}\emph{
The abelian category $\aA^p$ is also not noetherian. 
In fact let us take a divisor $H\subset X$ and a curve 
$C\subset H$. Then there exists an infinite chain of surjections 
in $\aA^p$, 
$$\oO_H[1] \twoheadrightarrow \oO_H(C)[1] \twoheadrightarrow \oO_H(2C)[1] \twoheadrightarrow \cdots.$$
}
\end{rmk}

\subsection{Torsion pair on $\aA^p$ and the dualizing functor}
As we have seen in Remark~\ref{rmk:noe}, 
the abelian category $\aA^p$
is worse than $\Coh(X)$, and this fact sometimes causes
difficulty to handle $\aA^p$.
In this paragraph, we introduce a certain torsion pair on $\aA^p$
which makes $\aA^p$ much more amenable.  
Let us set $\aA^p_1$, $\aA^p_{1/2}$ to be the subcategories of $\aA^p$, 
\begin{align*}
\aA^p_1 & \cneq \langle F[1], \oO_x \mid F \mbox{ is a pure two dimensional 
sheaf and }x\in X\rangle, \\
\aA^p_{1/2} & \cneq \{ E\in \aA^p \mid \Hom(F, E)=0
\mbox{ for any }F\in \aA_{1}^p \}.
\end{align*}
The meaning of the subscript will be clear in Lemma~\ref{infty}.
\begin{rmk}\label{rmk:ob}\emph{
For $E\in \aA^p$, it is obvious that $E\in \aA_1^p$ if and only 
if $\hH^0(E)$ is zero dimensional and $\hH^{-1}(E)$ is 
a torsion sheaf. Also $E\in \aA_{1/2}^p$ if and only if 
$\hH^{-1}(E)$ is torsion free and $\Hom(\oO_x, E)=0$ for 
any $x\in X$. In particular, we have 
$$\aA_{1/2}^p \cap \Coh_{\le 1}(X) =\{\mbox{\rm{pure one dimensional 
sheaves}}\}.$$} 
\end{rmk}
We show the following lemma. 
\begin{lem}\label{tor}
The pair $(\aA^p_{1}, \aA^p_{1/2})$ is a torsion pair of $\aA^p$. 
\end{lem}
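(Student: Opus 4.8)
The plan is to verify the two defining axioms of a torsion pair: vanishing of $\Hom(\aA_1^p, \aA_{1/2}^p)$, and existence, for every $E \in \aA^p$, of a short exact sequence $0 \to T \to E \to F \to 0$ in $\aA^p$ with $T \in \aA_1^p$ and $F \in \aA_{1/2}^p$. The first axiom is immediate from the very definition of $\aA_{1/2}^p$ as the right-orthogonal of $\aA_1^p$ inside $\aA^p$, so the entire content is the existence of the torsion sequence. The key auxiliary fact I would isolate first is that $\aA_1^p$ is closed under subobjects and quotients in $\aA^p$ — equivalently, that $\aA_1^p$ is an abelian subcategory. Using the cohomological description in Remark~\ref{rmk:ob} (an object $E\in\aA^p$ lies in $\aA_1^p$ iff $\hH^0(E)$ is zero-dimensional and $\hH^{-1}(E)$ is torsion), together with the long exact cohomology sequence of a short exact sequence in $\aA^p$, this closure property follows: a subobject of $E\in\aA_1^p$ has $\hH^{-1}$ a subsheaf of a torsion sheaf (hence torsion) and $\hH^0$ a subquotient built from zero-dimensional pieces, and dually for quotients. (One also checks $\aA_1^p$ is closed under extensions, which is clear from the definition as $\langle F[1], \oO_x\rangle$.)

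Next I would construct the torsion subobject $T\subset E$. Since $\aA^p$ is not noetherian (Remark~\ref{rmk:noe}), one cannot simply take "the maximal subobject in $\aA_1^p$" without an argument, so the construction must be more explicit. Given $E\in\aA^p$, consider its cohomology sheaves $\hH^{-1}(E)\in\Coh_{\ge 2}(X)$ and $\hH^0(E)\in\Coh_{\le 1}(X)$. Let $T_1 \subset \hH^{-1}(E)$ be the torsion subsheaf and let $T_0\subset \hH^0(E)$ be the maximal zero-dimensional subsheaf; both exist because $\Coh(X)$ is noetherian. I claim these assemble into a subobject $T\subset E$ in $\aA^p$ with $T\in\aA_1^p$. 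Concretely: the inclusion $T_1\hookrightarrow\hH^{-1}(E)$ and the triangle defining $E$ produce, via a diagram chase / octahedron, a canonical object $T$ fitting in $0\to T_1[1]\to T\to T_0\to 0$ in $\aA^p$ together with a monomorphism $T\hookrightarrow E$; one needs to check this monomorphism exists and that the quotient $F=E/T$ has $\hH^{-1}(F)=\hH^{-1}(E)/T_1$ torsion-free and $\hH^0(F)$ containing no zero-dimensional subsheaf, i.e. $\Hom(\oO_x,F)=0$ for all $x$ — which by Remark~\ref{rmk:ob} is exactly $F\in\aA_{1/2}^p$. Here one must be slightly careful that the two "pieces" $T_1$ and $T_0$ genuinely combine: a priori one only knows $\Hom(T_0[0], T_1[2])$-type obstruction groups vanish or not, but since $T_1[1]$ and $T_0$ already sit inside $E$ compatibly, the relevant extension is induced from $E$ itself and no choice is needed.

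The remaining verifications are the identifications $F\in\aA_{1/2}^p$ and $T\in\aA_1^p$. That $T\in\aA_1^p$ is immediate once $T$ is built, since $\hH^{-1}(T)=T_1$ is torsion and $\hH^0(T)=T_0$ is zero-dimensional. For $F\in\aA_{1/2}^p$: $\hH^{-1}(F)=\hH^{-1}(E)/T_1$ is torsion-free by maximality of the torsion subsheaf; and if $\oO_x\hookrightarrow F$ for some $x$, pulling back along $E\twoheadrightarrow F$ and using that $\hH^{-1}(E)\to\hH^{-1}(F)$ has torsion kernel $T_1$, one produces a zero-dimensional subsheaf of $E$ not contained in $T_0$, or a destabilizing sub in $\Coh_{\le1}$, contradicting the maximality of $T_0$ — a short diagram chase on the cohomology long exact sequence. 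I expect the main obstacle to be precisely this gluing step: making rigorous that the torsion data of $\hH^{-1}(E)$ and the zero-dimensional part of $\hH^0(E)$ fit together into an honest subobject $T$ of $E$ in the tilted category, rather than merely a subobject "up to extension". The cleanest route is probably to first dispose of the two extreme cases ($E\in\Coh_{\ge2}(X)[1]$ and $E\in\Coh_{\le1}(X)$ separately, where the statement reduces to the classical torsion subsheaf and to the zero-dimensional torsion subsheaf respectively) and then glue using the short exact sequence $0\to\hH^{-1}(E)[1]\to E\to\hH^0(E)\to 0$ in $\aA^p$ and the closure properties of $\aA_1^p$ established at the outset.
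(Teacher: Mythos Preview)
Your proposal has a genuine gap: the object $T$ you construct is in general too small, so the quotient $F=E/T$ need not lie in $\aA_{1/2}^p$. The point is that a zero-dimensional subobject $\oO_x \hookrightarrow E$ in the tilted category $\aA^p$ is \emph{not} the same thing as a zero-dimensional subsheaf of $\hH^0(E)$. Concretely, take $E = I_x[1]$ for a closed point $x\in X$ (cf.\ Example~\ref{exam:lim}(iii)). Then $\hH^{-1}(E)=I_x$ is torsion free and $\hH^0(E)=0$, so your $T_1=0$ and $T_0=0$, giving $T=0$ and $F=I_x[1]$. But the short exact sequence $0\to I_x\to\oO_X\to\oO_x\to 0$ becomes, after shifting, an exact sequence $0\to\oO_x\to I_x[1]\to\oO_X[1]\to 0$ in $\aA^p$; hence $\Hom(\oO_x,I_x[1])\neq 0$ and $I_x[1]\notin\aA_{1/2}^p$. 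The correct torsion part of $I_x[1]$ is $\oO_x$, which your cohomology-sheaf recipe does not see.

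The paper's proof addresses exactly this: after first quotienting out the torsion subsheaf of $\hH^{-1}(E)$ to obtain $E'$ with $\hH^{-1}(E')$ torsion free, it then seeks the maximal zero-dimensional \emph{subobject} $\uU'\subset E'$ in $\aA^p$, not merely the maximal zero-dimensional subsheaf of $\hH^0(E')$. Since $\aA^p$ is neither noetherian nor artinian, the existence of such a maximal $\uU'$ requires an argument: an ascending chain $\uU_1\subset\uU_2\subset\cdots\subset E'$ of zero-dimensional subobjects forces the $\hH^{-1}$ of the successive quotients to sit between $\hH^{-1}(E')$ and its double dual $\hH^{-1}(E')^{\vee\vee}$, and this bounds the chain. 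This double-dual bound is the missing idea in your sketch; your diagram chase ``pulling back along $E\twoheadrightarrow F$'' cannot produce a contradiction with the maximality of $T_0$, because the offending $\oO_x$ simply was never a subsheaf of $\hH^0$ to begin with.
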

\begin{proof}
It is enough to show that for any 
$E\in \aA^p$, 
there is an exact sequence in $\aA^p$
$$0 \lr E_{1} \lr E \lr E_{1/2} \lr 0, $$
with $E_i \in \aA_{i}^p$ for $i=1, 1/2$. 
Let $F\subset \hH^{-1}(E)$ be the 
maximum torsion subsheaf. We have the exact sequence in $\aA^p$, 
$$0 \lr F[1] \lr E \lr E' \lr 0.$$
Note that $F[1] \in \aA_{1}^p$
and $\hH^{-1}(E')=\hH^{-1}(E)/F$ is torsion free. 
Hence for any pure two dimensional sheaf 
$F'$, we have
$$\Hom(F'[1], E')=\Hom(F', \hH^{-1}(E'))=0.$$
Therefore
if $E'$ is not contained in $\aA_{1/2}^p$, there 
is a zero dimensional sheaf $\uU$ such that 
$\Hom(\uU, E')\neq 0$. By Remark~\ref{rmk:art}, 
this means that there is a subobject $\uU' \subset E'$ 
in $\aA^p$ such that $\uU'$ is a zero dimensional sheaf. 
Moreover we can take 
$\uU'\subset E$ to be maximum, 
i.e. there is no zero dimensional sheaf $\uU''$ with
$\uU' \subsetneq \uU'' \subset E'$ in $\aA^p$. 
To show this, it is enough to check 
that any sequence of 
 subobjects, 
\begin{align}\label{chain}
\uU_1 \subset \uU_2 \subset \cdots \subset \uU_n \subset \cdots \subset E', 
\end{align}
where $\uU_i$ are zero dimensional sheaves, terminates.  
Let $G_i=E'/\uU_i \in \aA^p$. We have the exact 
sequence in $\aA^p$, 
$$0 \lr \uU_i/\uU_{i+1} \lr G_i \lr G_{i+1} \lr 0.$$
Taking cohomology and noting that $\hH^{-1}(E')$ is torsion free, we see that 
\begin{align}\label{inf}
\hH^{-1}(E') \subset \hH^{-1}(G_1) \subset \hH^{-1}(G_2) 
\subset \cdots \subset \hH^{-1}(G_n) \subset \cdots \subset \hH^{-1}(E')^{\vee 
\vee},\end{align}
in $\Coh(X)$. 
The sequence (\ref{inf}) must terminate, 
say $\hH^{-1}(G_j)=\hH^{-1}(G_{j+1})=\cdots$.
Replacing $E'$ by $G_j$, we may assume that
$\hH^{-1}(E') = \hH^{-1}(G_{i})$ for any $i$. 
Then each $\uU_i$ are subsheaves of $\hH^0(E')$, thus (\ref{chain}) 
must terminate. Therefore there is a maximum zero dimensional sheaf 
$\uU' \subset E'$.  

Now let $E''=E'/\uU'$, and consider the exact sequences in $\aA^p$, 
\begin{align*}0 \lr F' \lr E \lr E'' \lr 0, \\
0 \lr F[1] \lr F' \lr \uU' \lr 0.
\end{align*}
Here $E \twoheadrightarrow
 E''$ is obtained as the composition of the quotients in $\aA^p$, 
$E \twoheadrightarrow E' \twoheadrightarrow E''$. 
The bottom sequence shows $F' \in \aA_1^p$. 
By the construction, we also have 
$E'' \in \aA^p_{1/2}$.
\end{proof}
Let $\mathbb{D}\colon D^b(X) \to D^b(X)^{\mathrm{op}}$ be 
the dualizing functor, 
$$\mathbb{D}(E)=\dR \hH om(E, \oO_X[2]).$$
In the following lemma, we see the compatibility 
of the torsion pair $(\aA_{1}^p, \aA_{1/2}^p)$ with
the dualizing functor $\mathbb{D}$. 
\begin{lem}\label{dual}
We have 
\begin{align*}
 E\in \aA^p_1 & \Rightarrow \mathbb{D}(E) \in \aA^p_1[-1], \\
 E\in \aA^p_{1/2} & \Rightarrow \mathbb{D}(E) \in \aA^p_{1/2}.
\end{align*}
\end{lem}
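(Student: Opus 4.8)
The plan is to use that $\mathbb{D}$ is a contravariant exact autoequivalence of $D^b(X)$ with $\mathbb{D}^2\cong\idd$ (since $\omega_X\cong\oO_X$), that every object of $D^b(X)$ is perfect ($X$ being smooth), and the standard duality facts for coherent sheaves on the smooth projective $3$-fold $X$: $\mathcal{E}xt^i(\fF,\oO_X)=0$ for $i<3-\dim\fF$ or $i>3$; $\dim\mathcal{E}xt^i(\fF,\oO_X)\le 3-i$; if $\fF$ is pure of codimension $c$ then $\mathcal{E}xt^c(\fF,\oO_X)$ is pure of codimension $c$ and $\dim\mathcal{E}xt^i(\fF,\oO_X)<3-i$ for $i>c$; and $\mathcal{E}xt^i(\oO_x,\oO_X)=0$ for $i\ne 3$ with $\mathcal{E}xt^3(\oO_x,\oO_X)\cong\oO_x$.

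For the first implication, $\aA^p_1$ is by definition the smallest extension-closed subcategory of $D^b(X)$ containing the objects $F[1]$ ($F$ a pure two-dimensional sheaf) and $\oO_x$ ($x\in X$), so each of its objects is a finite iterated extension of these. Since $\mathbb{D}$ takes distinguished triangles to distinguished triangles and $\aA^p_1[-1]$ is again extension-closed, it suffices to treat the generators. One has $\mathbb{D}(\oO_x)\cong\oO_x[-1]\in\aA^p_1[-1]$; and for $F$ pure two-dimensional, $\dR\HOM(F,\oO_X)$ has cohomology only in degrees $1,2$, namely the pure two-dimensional sheaf $\mathcal{E}xt^1(F,\oO_X)$ and the zero-dimensional sheaf $\mathcal{E}xt^2(F,\oO_X)$, so $\mathbb{D}(F)=\dR\HOM(F,\oO_X)[2]$ has $\hH^{-1}$ pure two-dimensional and $\hH^0$ zero-dimensional, whence $\mathbb{D}(F)\in\aA^p_1$ and $\mathbb{D}(F[1])=\mathbb{D}(F)[-1]\in\aA^p_1[-1]$. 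Applying $\mathbb{D}$ again even gives $\mathbb{D}(\aA^p_1)=\aA^p_1[-1]$.

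For the second implication, let $E\in\aA^p_{1/2}$; by Remark~\ref{rmk:ob}, $V\cneq\hH^{-1}(E)$ is torsion free, $Q\cneq\hH^0(E)\in\Coh_{\le 1}(X)$, and $\Hom(\oO_x,E)=0$ for all $x$. I first show $\mathbb{D}(E)=\dR\HOM(E,\oO_X)[2]\in\aA^p$ by reading off its cohomology sheaves from the spectral sequence $E_2^{p,q}=\mathcal{E}xt^p(\hH^{-q}(E),\oO_X)\Rightarrow\mathcal{E}xt^{p+q}(E,\oO_X)$. Since $\mathcal{E}xt^p(Q,\oO_X)=0$ for $p\le 1$, $\HOM(V,\oO_X)$ is torsion free, $\mathcal{E}xt^1(V,\oO_X)$ has dimension $\le 1$, $\mathcal{E}xt^2(V,\oO_X)$ is zero-dimensional and $\mathcal{E}xt^3(V,\oO_X)=0$, the only possibly non-zero $E_2$-terms lie in total degrees $1,2,3$; hence $\hH^i(\mathbb{D}(E))=0$ for $i\notin\{-1,0,1\}$, $\hH^{-1}(\mathbb{D}(E))=\mathcal{E}xt^1(E,\oO_X)$ is a subsheaf of $\HOM(V,\oO_X)$ (so torsion free, in $\Coh_{\ge 2}(X)$), and $\hH^0(\mathbb{D}(E))=\mathcal{E}xt^2(E,\oO_X)$ is an extension of subquotients of $\mathcal{E}xt^1(V,\oO_X)$ and $\mathcal{E}xt^2(Q,\oO_X)$, both of dimension $\le 1$ (so in $\Coh_{\le 1}(X)$). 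It remains to kill the zero-dimensional sheaf $\hH^1(\mathbb{D}(E))=\mathcal{E}xt^3(E,\oO_X)$. Here $E$ perfect gives $\dR\HOM(E,\oO_X)\dotimes\oO_x\cong\dR\HOM(E,\oO_x)$, so taking $\hH^3$ gives $\mathcal{E}xt^3(E,\oO_X)\otimes\oO_x\cong\Ext^3_X(E,\oO_x)\cong\Hom_X(\oO_x,E)^\vee=0$ by Serre duality on the Calabi--Yau $3$-fold; Nakayama then forces $\mathcal{E}xt^3(E,\oO_X)=0$. Thus $\mathbb{D}(E)\in\aA^p$.

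Finally, to get $\mathbb{D}(E)\in\aA^p_{1/2}$ it suffices to show $\Hom(F,\mathbb{D}(E))=0$ for every $F\in\aA^p_1$. By the tensor--hom adjunction $\Hom(F,\mathbb{D}(E))\cong\Hom(E,\mathbb{D}(F))$, and by the first implication $\mathbb{D}(F)\cong G[-1]$ with $G\in\aA^p_1$, so $\hH^{-1}(G)$ is a pure two-dimensional sheaf and $\hH^0(G)$ is zero-dimensional. Since $E$ has cohomology only in degrees $-1,0$, a truncation argument reduces this to $\Hom(\hH^0(E),\hH^{-1}(G))$, which vanishes because $\hH^0(E)\in\Coh_{\le 1}(X)$ has no non-zero morphism into the pure two-dimensional sheaf $\hH^{-1}(G)$. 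Hence $\mathbb{D}(E)\in\aA^p_{1/2}$. I expect the main technical point to be the vanishing $\hH^1(\mathbb{D}(E))=\mathcal{E}xt^3(E,\oO_X)=0$, which is exactly where the subtle defining property $\Hom(\oO_x,E)=0$ of $\aA^p_{1/2}$ enters (torsion-freeness of $\hH^{-1}(E)$ alone would not be enough), via Serre duality on the Calabi--Yau $3$-fold.
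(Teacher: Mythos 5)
Your proof is correct, and its overall architecture matches the paper's: check the first implication on the generators of $\aA^p_1$, then for $E\in \aA^p_{1/2}$ verify the cohomological conditions forcing $\mathbb{D}(E)\in \aA^p$, and finish with the duality $\Hom(F,\mathbb{D}(E))\cong \Hom(E,\mathbb{D}(F))$ for $F\in\aA_1^p$. The middle verifications are, however, carried out by a genuinely different route. The paper bounds the amplitude of $\mathbb{D}(E)$ and proves $\hH^{-1}(\mathbb{D}(E))\in\Coh_{\ge 2}(X)$ through Hom-vanishing identities such as $\Hom(\mathbb{D}(E),\oO_x[-k])\cong\Hom(\oO_x[k-1],E)$ and $\Hom(F[1],\mathbb{D}(E))\cong\Hom(E,\mathbb{D}(F[1]))$, and obtains $\hH^0(\mathbb{D}(E))\in\Coh_{\le 1}(X)$ by localizing at codimension-one points; you instead read all the cohomology sheaves of $\mathbb{D}(E)$ off the local-to-global $\eE xt$ spectral sequence together with the standard codimension bounds for $\eE xt^i(-,\oO_X)$ of pure and torsion-free sheaves. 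Your treatment of the crux, $\eE xt^3(E,\oO_X)=0$ via restriction to fibres, Serre duality and Nakayama, is the paper's step $\Hom(\mathbb{D}(E),\oO_x[-k])\cong\Hom(\oO_x[k-1],E)=0$ in different clothing: both reduce exactly to $\Hom(\oO_x,E)=0$, as you correctly anticipated. One point where your version is actually more careful: for a pure two-dimensional sheaf $G$ the paper asserts $\eE xt^i_X(G,\oO_X)=0$ for all $i\neq 1$, which need not hold ($\eE xt^2_X(G,\oO_X)$ can be a non-zero zero-dimensional sheaf); your argument keeps this zero-dimensional piece and absorbs it into $\aA^p_1$ via the generators $\oO_x$, which is the accurate statement, and the conclusion $\mathbb{D}(G[1])\in\aA^p_1[-1]$ is unaffected either way.
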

\begin{proof}
First we show that $\mathbb{D}(E) \in \aA^p_1[-1]$ 
for $E\in \aA^p_1$. It is enough to check this 
for $E=G[1]$ and $E=\oO_x$, where 
$G$ is a pure two dimensional sheaf and
$x\in X$ is a closed point. 
Since $G$ is pure, we have 
$$\eE xt^i_X(G, \oO_X)=0, \quad \mbox{ for }i\neq 1,$$ and 
$\eE xt^1_X(G, \oO_X)$ is a pure two dimensional sheaf. 
(cf.~\cite[Section 1.1]{Hu}.)
Therefore $\mathbb{D}(G[1])\in \aA^p_1[-1]$. 
Also we have $\mathbb{D}(\oO_x)=\oO_x[-1] \in \aA^p_1[-1]$. 

Next let us take $E\in \aA^p_{1/2}$ and check 
$\mathbb{D}(E)\in \aA^p_{1/2}$.  
Since 
$\hH^0(E)$ is a torsion sheaf and 
$E$ is concentrated on $[-1,0]$, 
we can easily see $\hH^i(\mathbb{D}(E))=0$ for $i\le -2$.
Suppose that $\hH^k(\mathbb{D}(E)) \neq 0$
and $\hH^i(\mathbb{D}(E))=0$ for any $i<k$. 
Then there is a closed point $x\in X$ such that 
$$0 \neq \Hom(\mathbb{D}(E), \oO_x[-k]) =
\Hom(\oO_x[k-1], E).$$
Therefore we have $k\le 0$, and $\mathbb{D}(E)$ is concentrated on
$[-1,0]$. Let us take $F\in \Coh_{\le 1}(X)$. Since we have 
$$\hH^i(\mathbb{D}(F[1])) =0 \quad \mbox{ for }
i \le 0, $$
it follows that 
\begin{align*}
\Hom(F, \hH^{-1}(\mathbb{D}(E))) &=\Hom(F[1], \mathbb{D}(E)) \\
&= \Hom(E, \mathbb{D}(F[1])) \\
&=0.\end{align*}
Hence $\hH^{-1}(\mathbb{D}(E)) \in \Coh_{\ge 2}(X)$. 
Let us take a codimension one point
$p\in X$. Since $\hH^{-1}(E)$ is 
torsion free, we 
have $E_p \cong \oO_{X,p}^{\oplus r}[1]$ for some $r$. 
Therefore $\mathbb{D}(E)_p \cong \oO_{X,p}^{\oplus r}[1]$, and 
this implies $\hH^0(\mathbb{D}(E)) \in \Coh_{\le 1}(X)$, i.e. 
$\mathbb{D}(E) \in \aA^p$. Moreover for any object $E' \in \aA^p_{1}$,
we have
$$ \Hom(E', E) \cong \Hom(\mathbb{D}(E), \mathbb{D}(E'))=0,$$
since $\mathbb{D}(E') \in \aA^p_{1}[-1]$. Therefore we can conclude
$\mathbb{D}(E)\in \aA^p_{1/2}$. 
\end{proof}
\begin{rmk}\emph{
According to~\cite{Kashi}, the abelian category 
$\mathbb{D}(\aA^p)$ corresponds to the 
heart of a perverse t-structure $\aA^{p^{\ast}}$ (up to shift)
with the dual 
perversity function $p^{\ast}\colon X^{\rm top} \to \mathbb{Z}$. 
Lemma~\ref{dual} implies that $\aA^{p^{\ast}}$ is obtained as 
a tilting with respect to the torsion pair $(\aA_{1}^p, \aA_{1/2}^p)$.} 
\end{rmk}

Let us take $E, F\in \aA^p_i$ and a morphism  
$f\colon E\to F$. The morphism $f$ is called a \textit{strict monomorphism}
if $f$ is injective in $\aA^p$ and $\Cok(f) \in \aA^p_i$. 
Similarly $f$ is called a \textit{strict epimorphism} if 
$f$ is surjective in $\aA^p$ and $\Ker(f) \in \aA^p_i$. 
Although the category $\aA^p$ is not artinian nor noetherian, 
each subcategories $\aA^p_i$ have such properties. 

\begin{lem}\label{finlen}
For $i=1, 1/2$, the category $\aA^p_i$ is of finite length
with respect to strict monomorphisms, and strict epimorphisms, i.e. 
any infinite chains of strict monomorphisms, strict 
epimorphisms in $\aA^p_i$, 
\begin{align}\label{mono}
&\cdots \hookrightarrow E_n  \hookrightarrow 
\cdots \hookrightarrow E_1 \hookrightarrow E_0, \\
\label{epi}
& E_0 \twoheadrightarrow E_1 \twoheadrightarrow \cdots \twoheadrightarrow E_n 
\twoheadrightarrow \cdots.
\end{align}
must terminate.  
\end{lem}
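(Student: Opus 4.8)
The plan is to reduce the finite-length statements for $\aA^p_i$ to noetherian/artinian-type properties of $\Coh(X)$ by tracking cohomology sheaves. The point is that a strict monomorphism $E\hookrightarrow F$ in $\aA^p_i$ is, by definition, constrained so that $\Cok$ also lies in $\aA^p_i$; this extra constraint (which is absent for arbitrary subobjects, cf.~Remark~\ref{rmk:noe}) is exactly what allows us to control the cohomology sheaves $\hH^{-1}$ and $\hH^0$ separately. First I would treat the two values $i=1$ and $i=1/2$ separately, and within each case exploit the compatibility with the dualizing functor $\mathbb{D}$ from Lemma~\ref{dual}: since $\mathbb{D}$ is an (anti-)equivalence interchanging injections and surjections, and by Lemma~\ref{dual} it preserves $\aA^p_{1/2}$ (resp.\ sends $\aA^p_1$ to $\aA^p_1[-1]$), it suffices to prove that chains of strict monomorphisms (\ref{mono}) terminate; the statement for strict epimorphisms (\ref{epi}) then follows by applying $\mathbb{D}$.

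For $i=1/2$: given a chain of strict monomorphisms $\cdots\hookrightarrow E_n\hookrightarrow\cdots\hookrightarrow E_0$ in $\aA^p_{1/2}$, each $\hH^{-1}(E_k)$ is torsion free and $\Hom(\oO_x,E_k)=0$ for all $x$, by Remark~\ref{rmk:ob}. From the long exact cohomology sequence applied to $0\to E_{k+1}\to E_k\to E_k/E_{k+1}\to 0$ (with $E_k/E_{k+1}\in\aA^p_{1/2}$), one gets injections $\hH^{-1}(E_{k+1})\hookrightarrow\hH^{-1}(E_k)$ that are moreover saturated (the quotient $\hH^{-1}(E_k/E_{k+1})$ is again torsion free), so the ranks are non-increasing and the chain $\{\hH^{-1}(E_k)\}$ stabilizes after finitely many steps because a strictly decreasing chain of saturated subsheaves of a fixed torsion-free sheaf of bounded rank cannot be infinite --- once the rank is constant, the generic quotient is torsion, forcing equality by torsion-freeness. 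After discarding finitely many terms we may assume $\hH^{-1}(E_k)$ is constant; then from the same exact sequence $\hH^0(E_{k+1})$ becomes a subsheaf of $\hH^0(E_k)$, and these are objects of $\Coh(X)$, which is noetherian, so the chain of subsheaves $\hH^0(E_{k+1})\subset\hH^0(E_k)$ stabilizes too. Hence the chain (\ref{mono}) terminates.

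For $i=1$: here $E\in\aA^p_1$ means $\hH^0(E)$ is zero-dimensional and $\hH^{-1}(E)$ is a torsion sheaf (Remark~\ref{rmk:ob}). A chain of strict monomorphisms gives, via the long exact sequence, injections $\hH^{-1}(E_{k+1})\hookrightarrow\hH^{-1}(E_k)$ of torsion sheaves and, once these stabilize, injections $\hH^0(E_{k+1})\hookrightarrow\hH^0(E_k)$ of finite-length sheaves; noetherianity of $\Coh(X)$ handles both, so again the chain terminates. Alternatively, and perhaps more cleanly, one may first apply $\mathbb{D}$ (using Lemma~\ref{dual}, $\mathbb{D}(\aA^p_1)=\aA^p_1[-1]$) to convert the problem into an artinian-type statement about cohomology sheaves of the duals, where the zero-dimensional part is handled by finite length and the torsion part by the ACC in $\Coh(X)$.

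The main obstacle I expect is the bookkeeping in the $i=1/2$ case: one must be careful that the subsheaf inclusions $\hH^{-1}(E_{k+1})\subset\hH^{-1}(E_k)$ produced by the cohomology sequence are genuinely \emph{saturated} (otherwise the rank argument does not close), and this is precisely where the hypothesis $\Cok(f)\in\aA^p_{1/2}$ --- i.e.\ that $f$ is a \emph{strict} monomorphism rather than an arbitrary one --- is used in an essential way; the infinite chain in Remark~\ref{rmk:noe} shows it really is needed. A secondary subtlety is the reduction step ``discard finitely many terms'': one should phrase the two stabilizations (first for $\hH^{-1}$, then for $\hH^0$) as happening in sequence, noting that truncating the chain does not affect whether it terminates.
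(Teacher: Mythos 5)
Your overall strategy (dualize via $\mathbb{D}$ to reduce strict epimorphisms to strict monomorphisms, then control $\hH^{-1}$ and $\hH^0$ separately through the long exact sequence, using strictness to constrain the successive quotients) is exactly the paper's, and your treatment of the $\hH^{-1}$-chain in the $i=1/2$ case (rank non-increasing; saturation forces equality once the rank is constant) is a correct variant of the paper's $\ch_0$ argument. But there is a genuine gap where you invoke noetherianity of $\Coh(X)$ to terminate the chains of subsheaves $\cdots\subset\hH^0(E_{k+1})\subset\hH^0(E_k)\subset\cdots$: these chains are \emph{descending}, and noetherianity is the ascending chain condition. $\Coh(X)$ is not artinian, and a descending chain of subsheaves of a one-dimensional sheaf need not terminate (e.g.\ $\oO_C\supset\oO_C(-p)\supset\oO_C(-2p)\supset\cdots$, the same phenomenon as in Remark~\ref{rmk:noe}). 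In the $i=1/2$ case the sheaves $\hH^0(E_k)$ lie in $\Coh_{\le 1}(X)$ and may well be one-dimensional, so your argument does not close. The needed fix is a second numerical reduction: $\ch_2(\hH^0(E_k))\cdot\omega$ is non-increasing and non-negative, so may be assumed constant, whence $G_k=E_k/E_{k+1}=\hH^0(G_k)$ is zero-dimensional; but $G_k\in\Coh_{\le 1}(X)\cap\aA^p_{1/2}$ forces $G_k$ to be pure one-dimensional, hence $G_k=0$. Strictness is thus used essentially a second time here, not only for the saturation of the $\hH^{-1}$ inclusions.

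The same confusion affects your $i=1$ case: the descending chain of \emph{torsion} sheaves $\hH^{-1}(E_{k+1})\subset\hH^{-1}(E_k)$ is not terminated by noetherianity either, since these torsion sheaves can be two-dimensional and of infinite length. The paper first observes that $-\ch_1(E_k)\cdot\omega^2$ is non-increasing and non-negative, hence may be assumed constant, and then uses $\hH^{-1}(G_k)\in\Coh_{\ge 2}(X)$ to conclude $\hH^{-1}(G_k)=0$. Only the final descending chain of the $\hH^0(E_k)$ in the $i=1$ case, consisting of zero-dimensional (finite length) sheaves, terminates for free --- and that is an artinian statement, not a noetherian one. So the missing idea throughout is the use of the degrees $\ch_1\cdot\omega^2$ and $\ch_2\cdot\omega$ with respect to the polarization to bound the number of nonzero successive quotients; once those reductions are inserted, your outline becomes the paper's proof.
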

\begin{proof}
By applying the dualizing functor $\mathbb{D}$ and using 
Lemma~\ref{dual}, it is enough to 
show that a chain (\ref{mono}) terminates. 
Let us take an infinite chain 
(\ref{mono}) in $\aA^p_1$ with each $E_i \in \aA^p_1$. 
Let $\omega$ be an ample divisor on $X$. 
Since $-\ch_1(E) \cdot \omega ^2 \ge 0$ for $E\in \aA^p_1$, 
we have 
$$-\ch_1(E_i)\cdot \omega ^2 \ge -\ch_1(E_{i+1})\cdot \omega ^2 \ge 0.$$ 
Hence we may assume that $\ch_1(E_i)\cdot \omega ^2 = 
\ch_1(E_{i+1})\cdot \omega ^2$ for any $i$,  and this implies that 
the induced morphism 
$$\hH^{-1}(E_i) \lr \hH^{-1}(E_{i+1}),$$
 is an isomorphism in 
codimension one. Let us take the exact sequence in $\aA^p$, 
\begin{align}\label{G}
0 \lr E_i \lr E_{i+1} \lr G_i \lr 0.\end{align}
Then $\hH^{-1}(G_i)=0$ since otherwise $\hH^{-1}(G_i)$ 
is one or zero dimensional, and contradicts that
$\hH^{-1}(G_i) \in \Coh_{\ge 2}(X)$.
Taking the cohomology of (\ref{G}), 
we have the chain of inclusions of sheaves, 
\begin{align}
\label{mono2}
 \cdots \subset \hH^0(E_n) \subset \cdots \subset \hH^0(E_1) \subset
\hH^0(E_0).\end{align}
The sequence (\ref{mono2}) must terminate since each 
$\hH^0(E_j)$ is a zero dimensional sheaf by the definition of $\aA^p_1$. 
Hence the chain (\ref{mono}) also terminates. 

Similarly let us take a chain (\ref{mono})
with each $E_i \in \aA^p_{1/2}$. Then we have 
$$-\ch_0(E_i) \ge -\ch_0(E_{i+1}) \ge 0,$$
 hence we may assume $-\ch_0(E_i)=-\ch_0(E_{i+1})$ for any $i$. 
Let us consider the exact sequence as in (\ref{G}). Again 
$\hH^{-1}(G_i)=0$ since otherwise it 
is a two dimensional sheaf, which contradicts that $G_i \in \aA^p_{1/2}$. 
Taking the cohomology of (\ref{G}), 
we obtain the sequence (\ref{mono2}). In this 
case, we have 
$$\ch_2(\hH^0(E_i)) \cdot \omega \ge \ch_2(\hH^0(E_{i+1})) \cdot \omega
\ge 0,$$
hence we may assume $\ch_2(\hH^0(E_i))\cdot \omega =
\ch_2(\hH^0(E_{i+1}))\cdot \omega$. Then $G_i=\hH^0(G_i)$ is zero 
dimensional, thus $G_i=0$ by the definition of $\aA^p_{1/2}$. 
Therefore (\ref{mono}) must terminate. 
\end{proof}

\subsection{Limit stability on $\aA^p$}
Here we introduce the notion of limit stability
on $\aA^p$. Let
$A(X)_{\mathbb{C}}$ be the complexified ample cone, 
$$A(X)_{\mathbb{C}}\cneq \{
B+i\omega \in H^2(X, \mathbb{C}) \mid \omega \mbox{ is an ample 
class }\}.$$
For $\sigma=B+i\omega \in A(X)_{\mathbb{C}}$, we 
consider the group homomorphism $Z_{\sigma}\colon K(X) \to 
\mathbb{C}$, 
\begin{align}\label{charge}
Z_{\sigma}(E)=-\int e^{-(B+i\omega)}\ch(E) \sqrt{\td_X}.\end{align}
The above function does not give a stability function on 
$\aA^p$. However if we replace $\sigma$ by 
$$\sigma_{m}=B+mi\omega, \quad \mbox{ for } m\gg 0, $$
then we can define the well-defined argument 
 of $Z_{\sigma_m}(E)$ for any non-zero $E\in \aA^p$, 
 which defines the set of (semi)-stable objects in $\aA^p$.
 To see this in more detail, let us introduce the 
 (twisted) Mukai vector, 
 $$v^B \colon K(X) \ni E \longmapsto
 e^{-B}\ch(E) \sqrt{\td_X} \in H^{\ast}(X, \mathbb{R}).$$
 Let $v_i^B(E) \in H^{2i}(X, \mathbb{R})$
 be the $H^{2i}$-component of $v^B(E)$. Then one
 can expand (\ref{charge}) and give the following formula, 
 \begin{align}\label{formula}
Z_{\sigma_m}(E)&=-\int e^{-mi\omega}v^B(E) \\
\label{formula1}&=\left( -v^B_{3}(E)+\frac{1}{2}m^2 \omega ^2 v^B_1(E) \right) 
+ \left( m\omega v_2^{B}(E) -\frac{1}{6}m^3 \omega^3 v^B_0(E)\right)i.
\end{align}
We have the following lemma. 
\begin{lem}\label{lem:asy}
For a non-zero object $E\in \aA^p$, we have 
\begin{align}\label{asy}Z_{\sigma_m}(E) \in
\left\{ r\exp(i\pi \phi) : r>0, \frac{1}{4}
< \phi < \frac{5}{4}\right\},\end{align}
for $m\gg 0$. 
\end{lem}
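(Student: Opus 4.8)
The plan is to use the torsion pair $(\aA^p_1, \aA^p_{1/2})$ from Lemma~\ref{tor} to reduce the claim to the two ``extreme'' subcategories, and in each of them to read off the dominant term of the expansion (\ref{formula1}) as $m\to\infty$. First I would observe that since $(\aA^p_1,\aA^p_{1/2})$ is a torsion pair, every nonzero $E\in\aA^p$ sits in an exact sequence $0\to E_1\to E\to E_{1/2}\to 0$ with $E_i\in\aA^p_i$, and $Z_{\sigma_m}(E)=Z_{\sigma_m}(E_1)+Z_{\sigma_m}(E_{1/2})$. The set $\{r\exp(i\pi\phi):r>0,\ 1/4<\phi<5/4\}$ is a (strictly convex) open half-plane-like cone — more precisely it is the open cone of vectors making angle strictly between $\pi/4$ and $5\pi/4$ with the positive real axis, i.e.\ the open half-plane $\{\Ree(\cdot)<\Imm(\cdot)\}$ minus nothing; being convex and a cone, it suffices to show each summand lies in it for $m\gg0$, provided we also handle the degenerate case where one summand vanishes. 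So I would reduce to: (i) $E\in\aA^p_1$ nonzero; (ii) $E\in\aA^p_{1/2}$ nonzero.

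For case (i), an object $E\in\aA^p_1$ has $\hH^{-1}(E)$ torsion and $\hH^0(E)$ zero-dimensional, so $\ch_0(E)=0$ and $-\ch_1(E)\cdot\omega^2\ge 0$. From (\ref{formula1}) the imaginary part is $m\omega v_2^B(E)-\tfrac16 m^3\omega^3 v_0^B(E)=m\omega v_2^B(E)$ (since $v_0^B(E)=\ch_0(E)=0$), and the real part is $-v_3^B(E)+\tfrac12 m^2\omega^2 v_1^B(E)$ with $v_1^B(E)=\ch_1(E)$. Thus for large $m$ the real part is dominated by $\tfrac12 m^2\omega^2\ch_1(E)\le 0$ and grows like $-m^2$, while the imaginary part grows at most like $m$; hence $\Ree Z_{\sigma_m}(E)<\Imm Z_{\sigma_m}(E)$ once $m$ is large, i.e.\ the phase is in $(1/4,5/4)$ — unless $\ch_1(E)\cdot\omega^2=0$, in which case $\hH^{-1}(E)$ is supported in codimension $\ge 2$, forcing $E\in\Coh_{\le 1}(X)$ with $\dim\Supp E\le 1$ (actually $E$ is an honest sheaf in $\Coh_{\le 1}$), and then one checks directly via $Z_{\sigma_m}(E)=-v_3^B(E)+m\omega v_2^B(E)\,i$ that $\Imm\ge 0$ and, when $\Imm=0$, $\Ree\le 0$ (here I use $v_2^B$ is the class of the support $1$-cycle and nonnegativity against $\omega$, together with purity/Riemann–Roch positivity of $\chi$). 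For case (ii), $E\in\aA^p_{1/2}$ has $\hH^{-1}(E)$ torsion-free, so $-\ch_0(E)\ge 0$, and if $\ch_0(E)<0$ the imaginary part $-\tfrac16 m^3\omega^3\ch_0(E)>0$ grows like $+m^3$ while the real part grows at most like $m^2$, so again the phase lies in $(1/4,5/4)$ for $m\gg0$; if $\ch_0(E)=0$ then $E\in\Coh_{\ge 2}(X)[1]$, i.e.\ $E=G[1]$ for a torsion-free (rank-$0$ case excluded, so pure two-dimensional or one-dimensional) sheaf $G$... more carefully, $\ch_0=0$ and $\hH^{-1}=G$ pure of dimension $\le 2$, and one repeats the dimension-$2$ vs dimension-$\le1$ subanalysis as in case (i), tracking signs through the shift by $[1]$.

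The main obstacle I expect is the book-keeping in the boundary cases where the leading coefficient in $m$ degenerates: when $\ch_0(E)=\ch_1(E)\cdot\omega^2=0$ (case i) or $\ch_0(E)=0$ with two-dimensional part also degenerating (case ii), the naive ``leading term'' argument gives no information and one must descend to the genuinely sheaf-theoretic positivity statements — essentially the fact that a pure sheaf of dimension $d$ has $\ch_d\cdot\omega^{d}>0$ and that the Euler-characteristic–type quantity $v_3^B$ has the right sign on such sheaves, which is exactly the content of why $Z$ restricted to $\Coh_{\le 1}(X)$ (resp.\ the image of $\mathbb{D}$) behaves like a genuine stability function there. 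I would organize this as a descending induction on $\dim\Supp$: peel off the lower-dimensional torsion so that the relevant Chern class pairs strictly positively with the appropriate power of $\omega$, and only invoke the delicate "$\Imm=0\Rightarrow\Ree\le0$" at the very bottom stratum. The convexity remark at the start is what makes the reduction to these finitely many cases legitimate, so I would state it cleanly (the target region is convex, conical, and stable under addition of nonzero elements) before doing any computation.
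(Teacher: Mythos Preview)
Your overall strategy is sound --- decompose $E$ into pieces, show each piece lands in the target region for $m\gg 0$, and use that the region $\{\Ree z < \Imm z\}$ is an open half-plane (hence closed under addition of nonzero elements). However, the paper takes a much simpler route, and your case (ii) boundary contains an actual error.

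\textbf{Comparison with the paper.} The paper does not use the torsion pair $(\aA^p_1,\aA^p_{1/2})$ at all. Instead it works directly with coherent sheaves stratified by $\dim\Supp$: for a sheaf $E$ with $\dim\Supp E=3-i$, one has $v_j^B(E)=0$ for $j<i$ and $v_i^B(E)\cdot\omega^{3-i}=\ch_i(E)\cdot\omega^{3-i}>0$ \emph{strictly}, so the leading term in (\ref{formula1}) never degenerates and $\arg Z_{\sigma_m}(E)$ converges to one of $\pi,\pi/2,0,-\pi/2$ according as $\dim\Supp E=0,1,2,3$. Then $\aA^p=\langle \Coh_{\ge 2}(X)[1],\Coh_{\le 1}(X)\rangle$ immediately gives that both $\hH^{-1}(E)[1]$ and $\hH^0(E)$ have asymptotic phase in $\{\pi/2,\pi\}$, and convexity finishes. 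This completely avoids your ``boundary cases where the leading coefficient degenerates,'' because for a \emph{sheaf of known dimension} the relevant coefficient is strictly positive by construction.

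\textbf{The error in your case (ii).} When $E\in\aA^p_{1/2}$ and $\ch_0(E)=0$, you write ``then $E\in\Coh_{\ge 2}(X)[1]$, i.e.\ $E=G[1]$.'' This is wrong: $\hH^{-1}(E)$ is torsion-free of rank $0$, hence zero, so $E=\hH^0(E)\in\Coh_{\le 1}(X)$ is an honest (pure one-dimensional) sheaf, not a shifted one. There is no ``tracking signs through the shift by $[1]$'' to do. The correct argument is then immediate: $\omega\ch_2(E)>0$ gives $\Imm Z_{\sigma_m}(E)\to +\infty$ linearly while $\Ree$ stays bounded. Similarly, in your case (i) boundary, once $\omega^2\ch_1(E)=0$ you should note that $\hH^{-1}(E)\in\Coh_{\ge 2}(X)$ is torsion with vanishing $\ch_1\cdot\omega^2$, hence zero, forcing $E$ to be a \emph{zero}-dimensional sheaf (not merely $\le 1$-dimensional); then $Z_{\sigma_m}(E)=-\ch_3(E)<0$ with no appeal to ``Riemann--Roch positivity of $\chi$'' needed.

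In short: your plan works after fixing the $\ch_0=0$ subcase, but the paper's dimension-by-dimension analysis on sheaves is both shorter and structurally cleaner, since strict positivity of the top Chern class against $\omega$ eliminates all the degeneracies you anticipate.
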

\begin{proof}
Let us take $E\in \Coh(X)$ with $\dim \Supp(E)=3-i$ for 
$0 \le i\le 3$. It is easy to see that 
$v_j^B(E)=0$ for $j<i$ and 
$$v_i^B(E)\cdot \omega^{3-i}=\ch_i(E) \cdot \omega ^{3-i} >0.$$
Therefore by the formula (\ref{formula1}), 
the argument of $Z_{\sigma_m}(E)$ for $m\to \infty$
goes, 
(modulo $2\pi$, )
\begin{align}\label{infty}
\arg Z_{\sigma_m}(E) \lr \left\{ \begin{array}{cc}
\pi & \quad \dim \Supp(E)=0, \\
\frac{\pi}{2} & \quad \dim \Supp(E)=1, \\
0 & \quad \dim \Supp(E)=2, \\
-\frac{\pi}{2} & 
\quad \dim \Supp(E)=3.
\end{array}\right. \end{align}
Since the category $\aA^p$ is generated by 
$\Coh_{\le 1}(X)$ and $\Coh_{\ge 2}(X)[1]$, the above 
asymptotic behavior of $Z_{\sigma_m}(E)$ shows the result. 
\end{proof}
Given $\sigma \in A(X)_{\mathbb{C}}$ and 
a non-zero object $E\in \aA^p$, we can uniquely 
determine the phase of $Z_{\sigma_m}(E)$ by 
$$\phi_{\sigma_m}(E)=\frac{1}{\pi}\Imm \log Z_{\sigma_m}(E) 
\in \left(\frac{1}{4}, \frac{5}{4}\right),$$
for $m\gg 0$. 
For non-zero $F, E\in \aA^p$, 
we simply write 
$$\phi_{\sigma}(F) \prec \phi_{\sigma}(E), \quad 
\phi_{\sigma}(F) \preceq  \phi_{\sigma}(E), $$
if $\phi_{\sigma_m}(F) < \phi_{\sigma_m}(E)$, 
$\phi_{\sigma_m}(F)\le \phi_{\sigma_m}(E)$ for 
$m\gg 0$ respectively. 
Below we introduce the notion of 
limit (semi)stable objects. 
\begin{defi}\emph{
For $\sigma \in A(X)_{\mathbb{C}}$, 
a non-zero object $E\in \aA^p$ is called $\sigma$-\textit{limit stable} 
(resp. $\sigma$-\textit{limit semistable})
if for any non-zero subobject $F\subsetneq E$, one has 
$$\phi_{\sigma}(F) \prec \phi_{\sigma}(E), \quad (\mbox{resp. } 
\phi_{\sigma}(F) \preceq \phi_{\sigma}(E).) $$}
\end{defi}
\begin{rmk}\label{rmk:induce}
\emph{
In Lemma~\ref{lem:asy}, the smallest $m>0$ for which 
(\ref{asy}) holds depends on $E$, the function 
$Z_{\sigma_m}$ does not give 
stability functions on $\aA^p$ for any $m$.
On the other hand, the function $Z_{\sigma}$ induces 
the stability condition on the subcategory $\Coh_{\le 1}(X)\subset \aA^p$
by the composition, 
$$K(\Coh_{\le 1}(X)) \lr K(X) \stackrel{Z_{\sigma}}{\lr} \mathbb{C}.$$
The induced stability condition is the same one constructed 
in~\cite[Lemma 3.4]{ToBPS}.  
}
\end{rmk}
\begin{rmk}\emph{Our notion of 
limit stability is included in the notion of polynomial 
stability introduced by A.~Bayer~\cite{Bay} independently. 
Some of the results in this section, especially 
Theorem~\ref{property} (i), are proved in~\cite{Bay} in more general 
setting, although the proofs are different. 
}
\end{rmk}
\begin{rmk}\emph{
It is easy to see some standard stability properties for 
limit stability. For example, let $E, F \in \aA^p$ be 
$\sigma$-limit semistable with $\phi_{\sigma}(E)\succ \phi_{\sigma}(F)$. 
Then $\Hom(E, F)=0$. Also for $\sigma$-limit stable 
object $E\in \aA^p$, one has $\Hom(E, E)=\mathbb{C}$. 
}
\end{rmk}

In the following, we give some examples of limit (semi)stable 
objects. 
The proofs are straightforward and we leave them to the readers. 
\begin{exam}\label{exam:lim}\emph{
(i) Let $F$ be a $\mu$-stable vector bundle
on $X$. Then $F[1] \in \aA^p$ 
and it is $\sigma$-limit stable for any $\sigma \in A(X)_{\mathbb{C}}$.} 

\emph{(ii) Let us take $\sigma=B+i\omega \in A(X)_{\mathbb{C}}$
and 
$F\in \Coh_{\le 1}(X)\subset \aA^p$. 
Then noting Remark~\ref{rmk:art}
and Remark~\ref{rmk:induce}, we can easily see
that $F$ is a $\sigma$-limit
semistable if and only if $F$ is $(B, \omega)$-twisted 
semistable sheaf, i.e. 
for any non-zero subsheaf $F'\subset F$, one has 
$\mu_{\sigma}(F') \le \mu_{\sigma}(F)$, where 
$$\mu_{\sigma}(F)=
\frac{\ch_3(F)-B\ch_2(F)}{\omega \ch_2(F)}\in \mathbb{R}.$$
 }

\emph{(iii) Let $x\in X$ be a closed point and 
$I_x \subset \oO_X$ the ideal sheaf. Then 
$I_x$ is a Gieseker stable sheaf, but $I_x[1]\in \aA^p$ is not 
$\sigma$-limit semistable. In fact we have the
exact sequence in $\aA^p$, 
$$0 \lr \oO_x \lr I_x[1] \lr \oO_X[1] \lr 0,$$
with $\phi_{\sigma}(\oO_x)\succ \phi_{\sigma}(I_x[1])$, 
which destabilizes $I_x[1]$.} 
\end{exam}
For objects in $\aA_i^p$, we have the following lemma. 
\begin{lem}\label{infty}
For a non-zero object $E\in \aA_i^p$ (i=1, 1/2,) 
we have 
$$\phi_{\sigma_m}(E) \to i, \quad \mbox{ for }m\to \infty.$$
\end{lem}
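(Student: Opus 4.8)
The plan is to reduce everything to the explicit expansion~(\ref{formula1}) of $Z_{\sigma_m}$ together with the description of the cohomology sheaves of an object of $\aA^p_i$ recorded in Remark~\ref{rmk:ob}. For any $E\in\aA^p$ one has $\hH^{j}(E)=0$ for $j\neq-1,0$, so, since $Z_{\sigma_m}$ is a homomorphism on $K(X)$ and there $[E]=[\hH^0(E)]-[\hH^{-1}(E)]$,
\[
Z_{\sigma_m}(E)=-Z_{\sigma_m}(\hH^{-1}(E))+Z_{\sigma_m}(\hH^0(E)).
\]
The elementary fact I would use throughout is the following. Suppose $z_1(m),\dots,z_k(m)$ are complex numbers lying, for $m\gg0$, in the cone of Lemma~\ref{lem:asy} (which, having angular width $\pi$, is convex), and not all identically zero. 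Then \textbf{(a)} if $\arg z_j(m)\to c$ for every $j$ with $z_j\not\equiv0$, then $\arg\bigl(\sum_j z_j(m)\bigr)\to c$; and \textbf{(b)} if moreover one term $z_{j_0}$ satisfies $|z_j(m)|/|z_{j_0}(m)|\to0$ for all $j\neq j_0$ and $\arg z_{j_0}(m)\to c$, then $\arg\bigl(\sum_j z_j(m)\bigr)\to c$ irrespective of the arguments of the remaining terms. Part (a) holds because the argument of a sum of two nonzero elements of such a cone lies between their arguments, so one may squeeze (iterating over the $k$ terms); part (b) follows by factoring $z_{j_0}(m)$ out of the sum. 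It then suffices to read off, term by term, the leading order in $m$ of $Z_{\sigma_m}(\hH^{-1}(E))$ and $Z_{\sigma_m}(\hH^0(E))$ from~(\ref{formula1}).

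For $i=1$: by Remark~\ref{rmk:ob} the sheaf $\hH^0(E)$ is zero-dimensional, while $\hH^{-1}(E)$ is torsion and lies in $\Coh_{\ge2}(X)$, hence is pure two-dimensional (or zero). Then $Z_{\sigma_m}(\hH^0(E))=-\ch_3(\hH^0(E))$ is a nonpositive real number (argument $\pi$ when nonzero); and for the two-dimensional sheaf $\hH^{-1}(E)$ one has $v^B_0=0$, $v^B_1=\ch_1(\hH^{-1}(E))$ and $\omega^2\cdot v^B_1>0$, so by~(\ref{formula1}) the real part of $Z_{\sigma_m}(\hH^{-1}(E))$ grows like $\tfrac12 m^2\,\omega^2\cdot v^B_1>0$ while its imaginary part is only linear in $m$; thus $\arg Z_{\sigma_m}(\hH^{-1}(E))\to0$ and $\arg(-Z_{\sigma_m}(\hH^{-1}(E)))\to\pi$. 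Both surviving terms of $Z_{\sigma_m}(E)$ then have argument tending to $\pi$, and (a) gives $\phi_{\sigma_m}(E)\to1$. (Alternatively, with no computation at all: $\aA^p_1=\langle F[1],\oO_x\rangle$ is generated under extensions by objects of limiting phase $1$, by the computation in the proof of Lemma~\ref{lem:asy}, and (a) applied to finite iterated extensions gives the claim directly.)

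For $i=1/2$: by Remark~\ref{rmk:ob} the sheaf $\hH^{-1}(E)$ is torsion free; put $r=\ch_0(\hH^{-1}(E))\ge0$. If $r>0$, then by~(\ref{formula1}) the imaginary part of $Z_{\sigma_m}(\hH^{-1}(E))$ has leading term $-\tfrac16 m^3\,\omega^3\cdot r\to-\infty$, which dominates both its own real part (of order $m^2$) and the whole of $Z_{\sigma_m}(\hH^0(E))$, the latter being of order $m$ since $\hH^0(E)\in\Coh_{\le1}(X)$ has $v^B_0=v^B_1=0$; hence $\arg Z_{\sigma_m}(\hH^{-1}(E))\to-\tfrac\pi2$, and (b) gives $\arg Z_{\sigma_m}(E)\to\tfrac\pi2$. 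If $r=0$, then $\hH^{-1}(E)=0$ (a torsion-free sheaf of rank $0$ on the integral variety $X$ vanishes), so $E=\hH^0(E)$ lies in $\Coh_{\le1}(X)\cap\aA^p_{1/2}$, hence is a pure one-dimensional sheaf by Remark~\ref{rmk:ob}; then $Z_{\sigma_m}(E)=-v^B_3(E)+i\,m\,\omega\cdot\ch_2(E)$ with $\omega\cdot\ch_2(E)>0$, so $\arg Z_{\sigma_m}(E)\to\tfrac\pi2$ again. In either case $\phi_{\sigma_m}(E)\to\tfrac12$.

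The only step that needs care is the bookkeeping of growth orders in $m$ so as to locate the dominant term in each case: for $i=1/2$ with $\hH^{-1}(E)\neq0$, the cubic term $-\tfrac16 m^3\omega^3 r$ in the imaginary part overwhelms everything else — which is precisely why possible zero-dimensional subsheaves of $\hH^0(E)$, whose contribution to $Z_{\sigma_m}$ has argument $\pi$, do no harm — whereas for $i=1$ it is the quadratic real term, forced by $\ch_1(\hH^{-1}(E))\neq0$, that anchors the argument at $\pi$. The remaining verifications are routine.
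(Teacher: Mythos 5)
Your proof is correct and follows essentially the same route as the paper's: reduce to the expansion (\ref{formula1}) via the description of $\hH^{-1}(E)$ and $\hH^0(E)$ for $E\in\aA^p_i$ (pure two--dimensional plus zero--dimensional for $i=1$; torsion free of positive rank, or a pure one--dimensional sheaf, for $i=1/2$), and locate the dominant term in $m$. The paper's proof leaves the convexity/squeeze step and the growth--order bookkeeping implicit ("the result follows by the formula (\ref{formula1})"), whereas you spell them out; there is no substantive difference.
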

\begin{proof}
For $E=F[1]$ or $E=\oO_x$, where $F$ is a pure two 
dimensional sheaf and $x\in X$ is a closed point, 
the result follows by the formula (\ref{formula1}). 
By the definition of $\aA_1^p$, the result 
follows for any $E\in \aA_1^p$. 
Next let us take a non-zero object $E\in \aA_{1/2}^p$. 
Then by the definition of $\aA_{1/2}^p$, 
we have either $\hH^{-1}(E)$ is a torsion free 
sheaf, or $\hH^{-1}(E)=0$ and $\hH^{0}(E)$ is a pure 
one dimensional sheaf. In both cases, 
the result follows by the formula (\ref{formula1}). 
\end{proof}
 We 
have the following characterization of limit stable objects.  
\begin{lem}\label{iff2}
An object $E\in \aA^p$ is $\sigma$-limit (semi)stable 
with $\phi_{\sigma_m}(E) \to i$ for $m\to \infty$
if and only if $E\in \aA^p_i$ and for any strict monomorphism  
$0\neq F\hookrightarrow E$ in $\aA_{i}^p$,  one has 
$\phi_{\sigma}(F) \prec \phi_{\sigma}(E)$. 
(resp. $\phi_{\sigma}(F) \preceq \phi_{\sigma}(E)$.)
\end{lem}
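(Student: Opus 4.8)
The plan is to prove the equivalence in two directions, reducing everything to the torsion-pair structure $(\aA_1^p, \aA_{1/2}^p)$ of Lemma~\ref{tor} together with the phase asymptotics of Lemma~\ref{infty}. First I would fix $\sigma\in A(X)_{\mathbb{C}}$ and an object $E\in\aA^p$ with $\phi_{\sigma_m}(E)\to i$ for $m\to\infty$, where $i\in\{1,1/2\}$.

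For the forward direction, assume $E$ is $\sigma$-limit (semi)stable. I first claim $E\in\aA_i^p$. Apply the torsion-pair decomposition $0\to E_1\to E\to E_{1/2}\to 0$ in $\aA^p$ with $E_1\in\aA_1^p$, $E_{1/2}\in\aA_{1/2}^p$. By Lemma~\ref{infty}, if $E_1\neq 0$ then $\phi_{\sigma_m}(E_1)\to 1$, and if $E_{1/2}\neq 0$ then $\phi_{\sigma_m}(E_{1/2})\to 1/2$. In case $i=1/2$: if $E_1\neq 0$ it is a subobject with $\phi_{\sigma}(E_1)\succ\phi_{\sigma}(E)$, contradicting (semi)stability, so $E_1=0$ and $E=E_{1/2}\in\aA_{1/2}^p$. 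In case $i=1$: if $E_{1/2}\neq 0$ then $E_1\subsetneq E$ is a proper subobject (note $E_1\neq E$ since $E_{1/2}\neq 0$, and $E_1\neq 0$ since otherwise $E=E_{1/2}$ would force $\phi_{\sigma_m}(E)\to 1/2$, contradicting $\phi_{\sigma_m}(E)\to 1$), and $\phi_{\sigma}(E_1)\succ\phi_{\sigma}(E_{1/2})$; but more directly, $E_{1/2}$ being a quotient of $E$ and $E_1$ a subobject, (semi)stability of $E$ gives $\phi_{\sigma}(E_1)\preceq\phi_{\sigma}(E)\preceq\phi_{\sigma}(E_{1/2})$, i.e.\ $1\preceq 1/2$, absurd. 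Hence $E=E_1\in\aA_1^p$. Once $E\in\aA_i^p$ is established, any strict monomorphism $0\neq F\hookrightarrow E$ in $\aA_i^p$ is in particular a subobject of $E$ in $\aA^p$, so (semi)stability of $E$ immediately yields $\phi_{\sigma}(F)\prec\phi_{\sigma}(E)$ (resp.\ $\preceq$).

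For the converse, suppose $E\in\aA_i^p$ and every strict monomorphism $0\neq F\hookrightarrow E$ in $\aA_i^p$ satisfies $\phi_{\sigma}(F)\prec\phi_{\sigma}(E)$ (resp.\ $\preceq$). Let $0\neq G\subsetneq E$ be an arbitrary subobject in $\aA^p$; I must show $\phi_{\sigma}(G)\prec\phi_{\sigma}(E)$ (resp.\ $\preceq$). Decompose $G$ via the torsion pair: $0\to G_1\to G\to G_{1/2}\to 0$ with $G_1\in\aA_1^p$, $G_{1/2}\in\aA_{1/2}^p$. When $i=1/2$: since $\aA_{1/2}^p$ is closed under subobjects in $\aA^p$ (being a torsion-free class), $G\subset E$ forces $G\in\aA_{1/2}^p$, and one must check $G\hookrightarrow E$ is a \emph{strict} monomorphism, i.e.\ $E/G\in\aA_{1/2}^p$ — this follows because $E/G$ is a quotient of $E\in\aA_{1/2}^p$ and the torsion-free class is closed under quotients when the torsion class $\aA_1^p$ is closed under extensions and we use that any torsion subobject of $E/G$ would pull back to show $G$ is not "saturated"; more precisely I would pass to the saturation $\overline{G}$ of $G$ in $E$ (the kernel of $E\to (E/G)_{1/2}$), note $\overline{G}\in\aA_{1/2}^p$ with $E/\overline{G}\in\aA_{1/2}^p$ so $\overline{G}\hookrightarrow E$ is strict, and observe $\phi_{\sigma}(G)\preceq\phi_{\sigma}(\overline{G})\prec\phi_{\sigma}(E)$ using that $\overline{G}/G\in\aA_1^p$ has limiting phase $1>\phi_{\sigma_m}(\overline G)$. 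When $i=1$: apply $\mathbb{D}$ and Lemma~\ref{dual}, or argue directly via quotients, to reduce to the $\aA_1^p$ analogue: the relevant "saturation" is now a strict epimorphism out of $E$, handled symmetrically using that $\aA_1^p$ is closed under quotients.

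The main obstacle I anticipate is the converse direction: controlling a general subobject $G\subset E$ in $\aA^p$ that is neither in $\aA_i^p$ nor strict, by replacing it with an appropriate saturation inside $E$ without increasing the phase asymptotically. This requires care with the "off-diagonal" piece ($G_1$ when $i=1/2$, a quotient-torsion piece when $i=1$), for which I would invoke Lemma~\ref{infty} to show that piece has limiting phase strictly separated from $i$ on the correct side, so that the Harder–Narasimhan-type inequality $\phi_{\sigma}(G)\preceq\max(\phi_{\sigma}(G_1),\phi_{\sigma}(G_{1/2}))$ (read with the right ordering) is compatible with the desired bound. The finite-length property of $\aA_i^p$ from Lemma~\ref{finlen} ensures these saturation procedures terminate and that "strict mono/epi" is the right notion of subobject within $\aA_i^p$.
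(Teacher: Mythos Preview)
Your approach is essentially the same as the paper's: both directions rest on the torsion-pair decomposition of Lemma~\ref{tor} together with the phase separation of Lemma~\ref{infty}, and in the converse you replace an arbitrary subobject by one for which the inclusion is strict (you saturate for $i=1/2$, while the paper shrinks to the torsion part $F_1$ for $i=1$; these are exactly dual and the paper explicitly leaves the $i=1/2$ case to the reader). One small omission: in the stable case your inequality $\phi_{\sigma}(\overline{G})\prec\phi_{\sigma}(E)$ requires $\overline{G}\neq E$, so you should handle $\overline{G}=E$ separately (then $E/G\in\aA_1^p$ is nonzero and the see-saw gives $\phi_{\sigma}(G)\prec\phi_{\sigma}(E)$ directly).
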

\begin{proof}
Suppose first that $E$ is $\sigma$-limit semistable. 
By Lemma~\ref{tor}, there is an exact sequence, 
$$0 \lr E_{1} \lr E \lr E_{1/2} \lr 0, $$
in $\aA^p$ with $E_i \in \aA_i$. 
By Lemma~\ref{infty},
the limit semistability of $E$ implies 
$E_1=0$ or $E_{1/2}=0$. Hence 
if $\phi_{\sigma_m}(E)$ goes to $i$, we have 
$E\in \aA_{i}^p$.
Next   
assume that $E\in \aA^p_1$ and consider an 
exact sequence in $\aA^p$, 
$$0 \lr F \lr E \lr G \lr 0.$$
By Lemma~\ref{tor}, there
 is an exact sequence $0 \to F_1 \to F \to F_{1/2} \to 0$
with $F_i \in \aA^p_i$. 
Lemma~\ref{infty} yields, 
$$\phi_{\sigma}(F_1)\succeq \phi_{\sigma}(F) \succeq \phi_{\sigma}(F_{1/2}).$$ 
Composing the injections 
$F_1 \hookrightarrow F \hookrightarrow E$, we obtain
 the exact sequence in $\aA^p$, 
$$0 \lr F_1 \lr E \lr G' \lr 0.$$
For any $F' \in \aA_{1/2}^p$, 
we have $\Hom(G', F') \subset \Hom(E, F')=0$. 
Therefore $G'\in \aA^p_1$, 
i.e. $F_1 \hookrightarrow E$ is
a strict monomorphism. 
 Hence if $\phi_{\sigma}(F_1) \preceq \phi_{\sigma}(E)$ holds, then
$\phi_{\sigma}(F) \preceq \phi_{\sigma}(E)$, hence
$E$ is $\sigma$-limit semistable. 
The proofs for limit stable objects and the case of
$i=1/2$ are similar and we leave them to
the reader. 
\end{proof}
For $\sigma=B+i\omega \in A(X)_{\mathbb{C}}$, 
let $\sigma^{\vee}=-B+i\omega$. 
Combining Lemma~\ref{dual} with Lemma~\ref{iff2}, 
we have the following compatibility of limit
stability with the dualizing functor. 
\begin{lem}\label{compati}
We have the following. 
\begin{align*}
 E\in \aA^p_1 \mbox{ is }\sigma\mbox{-limit (semi)stable }
 & \Leftrightarrow \mathbb{D}(E)[1] \in \aA^p_1
 \mbox{ is }\sigma^{\vee}\mbox{-limit (semi)stable}, \\
 E\in \aA^p_{1/2}\mbox{ is }\sigma\mbox{-limit (semi)stable } & 
 \Leftrightarrow \mathbb{D}(E) \in \aA^p_{1/2}\mbox{ is }\sigma^{\vee}\mbox{-limit (semi)stable.}
\end{align*}
\end{lem}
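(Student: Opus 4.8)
The plan is to reduce the statement to the two facts already established: Lemma~\ref{dual} (compatibility of the torsion pair $(\aA^p_1, \aA^p_{1/2})$ with $\mathbb{D}$) and Lemma~\ref{iff2} (the characterization of limit (semi)stability in terms of strict monomorphisms inside $\aA^p_i$). Both the hypothesis and the conclusion have been phrased intrinsically in $\aA^p_i$, so what has to be checked is that $\mathbb{D}$ (shifted appropriately) carries the strict-monomorphism structure of $\aA^p_i$ contravariantly to itself, and that it reverses the $\prec$-ordering of phases once we replace $\sigma$ by $\sigma^\vee$.

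First I would verify the phase-reversal. Fix $\sigma = B+i\omega$. From the definition of $\mathbb{D}$ and the fact that $\ch_j(\mathbb{D}(E)) = (-1)^j \ch_j(E)$ (since $\td_X$ is trivial on a Calabi-Yau 3-fold, up to the even part, and $\mathbb{D}(E)=\dR\hH om(E,\oO_X[2])$), one reads off from the formula~(\ref{formula1}) for $Z_{\sigma_m}$ how $v^B_j$ transforms: passing from $B$ to $-B$ exactly matches the sign changes induced by dualizing, so that $Z_{(\sigma^\vee)_m}(\mathbb{D}(E)[\epsilon])$ is, up to a positive real scalar and the shift $\epsilon \in \{0,-1\}$ coming from Lemma~\ref{dual}, the complex conjugate (or its negative, absorbed by the shift) of $Z_{\sigma_m}(E)$. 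Concretely: for $E\in\aA^p_1$ we have $\mathbb{D}(E)[1]\in\aA^p_1$ and $\phi_{(\sigma^\vee)_m}(\mathbb{D}(E)[1]) \to 1$ by Lemma~\ref{infty}, while for $E\in\aA^p_{1/2}$ we have $\mathbb{D}(E)\in\aA^p_{1/2}$ with $\phi_{(\sigma^\vee)_m}(\mathbb{D}(E))\to 1/2$; in each case a short computation with~(\ref{formula1}) shows that for $F, E$ in the same $\aA^p_i$ one has $\phi_\sigma(F)\prec\phi_\sigma(E)$ if and only if $\phi_{\sigma^\vee}(\mathbb{D}(E)[\epsilon])\prec\phi_{\sigma^\vee}(\mathbb{D}(F)[\epsilon])$ (and likewise with $\preceq$). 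This is the routine calculation I would not grind through in detail.

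Next I would handle the strict-monomorphism structure. Suppose $E\in\aA^p_1$ is $\sigma$-limit (semi)stable; set $\widetilde{E}=\mathbb{D}(E)[1]\in\aA^p_1$, which lies in $\aA^p$ by Lemma~\ref{dual}. Take an arbitrary strict monomorphism $0\neq G\hookrightarrow \widetilde{E}$ in $\aA^p_1$, with cokernel $Q\in\aA^p_1$. Applying $\mathbb{D}[1]$ and using Lemma~\ref{dual} twice (both $\mathbb{D}$ and $\mathbb{D}\circ\mathbb{D}\simeq\id$ respect the categories $\aA^p_i$ up to the stated shifts), the triangle $G\to\widetilde{E}\to Q$ is sent to an exact sequence $0\to \mathbb{D}(Q)[1]\to E \to \mathbb{D}(G)[1]\to 0$ in $\aA^p$ with all three terms in $\aA^p_1$; in particular $\mathbb{D}(Q)[1]\hookrightarrow E$ is a strict monomorphism in $\aA^p_1$. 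By the $\sigma$-limit (semi)stability of $E$ via Lemma~\ref{iff2}, $\phi_\sigma(\mathbb{D}(Q)[1])\prec\phi_\sigma(E)$ (resp. $\preceq$). Translating back through the phase-reversal of the previous paragraph gives $\phi_{\sigma^\vee}(G)\prec\phi_{\sigma^\vee}(\widetilde{E})$ (resp. $\preceq$) for every strict mono $G\hookrightarrow\widetilde{E}$, which by Lemma~\ref{iff2} again says exactly that $\widetilde{E}$ is $\sigma^\vee$-limit (semi)stable. The reverse implication follows by symmetry since $\mathbb{D}\circ\mathbb{D}\simeq\id$ and $(\sigma^\vee)^\vee=\sigma$; the $i=1/2$ case is identical with the shift $\epsilon$ set to $0$ throughout.

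The main obstacle I anticipate is not conceptual but bookkeeping: keeping the shifts straight when composing $\mathbb{D}$ with itself and when identifying $\Cok$ in $\aA^p$ of the dualized map with $\mathbb{D}$ of the original $\Ker$/$\Cok$. One must check that $\mathbb{D}$ sends a strict monomorphism in $\aA^p_i$ to a strict epimorphism there (shifted), i.e.\ that strictness — the condition that kernel/cokernel again lies in $\aA^p_i$ rather than merely in $\aA^p$ — is preserved; this is exactly where Lemma~\ref{dual} is used in its full strength, applied separately to the sub, the quotient, and the total object of the short exact sequence. Once that is in place, the phase comparison is forced by the explicit formula~(\ref{formula1}) and nothing further is needed.
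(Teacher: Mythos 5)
Your proposal is correct and follows essentially the same route as the paper: the paper's proof consists of the single computation $v^{-B}(\mathbb{D}(E))=v^{B}(E)^{\vee}$, hence $Z_{\sigma^{\vee}}(\mathbb{D}(E))=-\overline{Z_{\sigma}(E)}$ (your phase-reversal step), and then declares that the statement follows from Lemma~\ref{dual} and Lemma~\ref{iff2} --- which is exactly the strict-monomorphism bookkeeping you carry out explicitly. Your write-up just makes the ``follows directly'' part precise; no new idea is needed and none is missing.
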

\begin{proof}
For $v\in H^{\rm{even}}(X, \mathbb{R})$, let $v^{\vee}$ be the dual operator, 
$$v=(v_0, v_1, v_2, v_3) \longmapsto v^{\vee}=(v_0, -v_1, v_2, -v_3).$$
Here $v_i$ is the $H^{2i}$-component of $v$. 
Then we have $v^{-B}(\mathbb{D}(E))=v^{B}(E)^{\vee}$, hence
\begin{align*}
Z_{\sigma^{\vee}}(\mathbb{D}(E)) &= -\int e^{-mi\omega}v^B(E)^{\vee}, \\
&= -\overline{Z_{\sigma}(E)}.
\end{align*}
Therefore the result follows from Lemma~\ref{dual} with Lemma~\ref{iff2}
directly. 
\end{proof}
Finally in this section, we prove the following theorem. 
\begin{thm}\label{property}
For $\sigma \in A(X)_{\mathbb{C}}$, we have the following. 

(i) For a non-zero $E\in \aA^p$, there exists a filtration in $\aA^p$,
\begin{align}\label{Harder} E_0 \subset E_1 \subset \cdots \subset E_n =E, 
\end{align}
such that each $F_i=E_i/E_{i+1}$ is $\sigma$-limit 
semistable with $\phi_{\sigma}(F_i)\succ \phi_{\sigma}(F_{i+1})$. i.e. 
(\ref{Harder}) is a Harder-Narasimhan filtration. 

(ii) For a $\sigma$-limit semistable object $E\in \aA^p$, there exists a 
filtration in $\aA^p$, 
\begin{align}\label{Jordan} E_0 \subset E_1 \subset \cdots \subset E_n =E, 
\end{align}
such that each $F_i=E_i/E_{i+1}$ is $\sigma$-limit 
stable with $\phi_{\sigma}(F_i)=\phi_{\sigma}(F_{i+1})$. i.e. 
(\ref{Jordan}) is a Jordan-H\"older filtration. 
\end{thm}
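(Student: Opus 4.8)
The plan is to prove both parts by reducing from the non-noetherian, non-artinian category $\aA^p$ to the much better-behaved subcategories $\aA^p_1$ and $\aA^p_{1/2}$, where Lemma~\ref{finlen} gives us finite-length behaviour with respect to strict monomorphisms and strict epimorphisms. First I would establish a coarse Harder-Narasimhan decomposition at the level of the torsion pair $(\aA^p_1, \aA^p_{1/2})$: given a non-zero $E\in\aA^p$, Lemma~\ref{tor} gives the canonical exact sequence $0\to E_1\to E\to E_{1/2}\to 0$ with $E_i\in\aA^p_i$, and Lemma~\ref{infty} tells us $\phi_{\sigma_m}(E_1)\to 1$ while $\phi_{\sigma_m}(E_{1/2})\to 1/2$, so $\phi_{\sigma}(E_1)\succ\phi_{\sigma}(E_{1/2})$ automatically. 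Thus it suffices to produce Harder-Narasimhan (and Jordan-H\"older) filtrations inside each $\aA^p_i$ separately, using only strict monomorphisms and strict epimorphisms, and then concatenate. By Lemma~\ref{iff2}, limit semistability of an object $E\in\aA^p_i$ is detected purely by strict sub-objects in $\aA^p_i$, so this reduction is faithful.

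Within $\aA^p_i$, I would run the standard Bridgeland-style argument (as in Proposition~\ref{suff}), but with ``subobject'' replaced throughout by ``strict subobject''. The key points to check: (a) the phase function $\phi_\sigma$ is well-defined on non-zero objects of $\aA^p_i$ and takes values in a bounded interval around $i$ (this follows from Lemma~\ref{lem:asy} and the definition of $\prec,\preceq$); (b) there is a see-saw property for strict short exact sequences $0\to A\to B\to C\to 0$ in $\aA^p_i$, namely $\phi_\sigma(A)\prec\phi_\sigma(B)\prec\phi_\sigma(C)$ or $\phi_\sigma(A)=\phi_\sigma(B)=\phi_\sigma(C)$ or the reversed inequalities — this comes from additivity of $Z_{\sigma_m}$ and the fact that, for $m\gg 0$, all three phases lie in a small arc, so the usual planar see-saw applies; (c) a ``maximal destabilizing strict subobject'' exists. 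For (c) I would argue that any strictly ascending chain of strict subobjects with strictly increasing phase must terminate by Lemma~\ref{finlen} (chain~(\ref{mono})), and dually any descending chain of strict quotients with strictly decreasing phase terminates (chain~(\ref{epi})); this is exactly the pair of finiteness hypotheses in Proposition~\ref{suff}, transported to $\aA^p_i$. Granting these, the existence of the Harder-Narasimhan filtration~(\ref{Harder}) follows by the usual induction: extract the maximal-phase strictly semistable strict subobject, pass to the strict quotient, repeat; termination is again Lemma~\ref{finlen}. For part (ii), once $E\in\aA^p_i$ is limit semistable with some phase, the full subcategory of limit semistable objects of that fixed phase in $\aA^p_i$ is an abelian subcategory (kernels and cokernels of phase-preserving maps stay in it, using the see-saw and strictness), and it is of finite length by Lemma~\ref{finlen}; its simple objects are precisely the limit stable objects of that phase, so a Jordan-H\"older filtration~(\ref{Jordan}) exists.

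The main obstacle I expect is the bookkeeping around strictness: an arbitrary subobject $F\subset E$ in $\aA^p$ with $E\in\aA^p_i$ need not lie in $\aA^p_i$, and even if it does the inclusion need not be a strict monomorphism, so I must repeatedly replace a given destabilizing subobject by a genuinely strict one without changing (or only improving) the phase. The argument for this is already contained in the proof of Lemma~\ref{iff2}: given $0\to F\to E\to G\to 0$ in $\aA^p$ with $E\in\aA^p_1$, one applies Lemma~\ref{tor} to $F$ to get $F_1\in\aA^p_1$, checks $\phi_\sigma(F_1)\succeq\phi_\sigma(F)$ by Lemma~\ref{infty}, and verifies that $F_1\hookrightarrow E$ is strict because $\Hom(G',F')\subset\Hom(E,F')=0$ for $F'\in\aA^p_{1/2}$ — and symmetrically for $i=1/2$. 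So the subtlety is real but the technology to handle it is in place; the remaining work is to thread this replacement through the Harder-Narasimhan and Jordan-H\"older inductions and to confirm that the termination of every relevant chain genuinely reduces to one of the two chains in Lemma~\ref{finlen} after applying $\mathbb{D}$ where needed (via Lemma~\ref{dual} and Lemma~\ref{compati}) to move between the monomorphism and epimorphism cases.
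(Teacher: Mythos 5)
Your proposal is correct and follows essentially the same route as the paper: reduce via the torsion pair $(\aA^p_1,\aA^p_{1/2})$ of Lemma~\ref{tor}, transport Proposition~\ref{suff} to each $\aA^p_i$ with ``subobject'' replaced by ``strict monomorphism'' (the chain conditions being supplied by Lemma~\ref{finlen}), identify the resulting semistable factors with limit semistable objects via Lemma~\ref{iff2}, and concatenate; part (ii) likewise follows from Lemma~\ref{iff2} and the finite-length property of Lemma~\ref{finlen}. Your discussion of the strictness bookkeeping makes explicit a point the paper's proof leaves implicit, but the underlying argument is the same.
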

\begin{proof}
(i) In Proposition~\ref{suff}, let us replace 
$\aA$, $\phi$, ``inclusions'', ``surjections'', by 
$\aA_i^p$, $\phi_{\sigma}$, 
``strict monomorphisms'', ``strict epimorphisms'',
respectively. 
As Lemma~\ref{finlen} provides the corresponding 
sufficient condition, 
we can follow
  the same proof of Proposition~\ref{suff}
 in~\cite[Proposition 2.4]{Brs1},
and show the following. 
For any $E\in \aA_i^p$, there
 is a finite sequence of strict monomorphisms in $\aA_i^p$,  
\begin{align}\label{Harder2} E_0 \hookrightarrow
 E_1 \hookrightarrow \cdots \hookrightarrow E_n =E, 
\end{align}
such that for any strict monomorphism 
$F\hookrightarrow F_i=E_i/E_{i+1} \in \aA_i^p$,  
one has $\phi_{\sigma}(F) \preceq \phi_{\sigma}(F_i)$, 
and $\phi_{\sigma}(F_i)\succ \phi_{\sigma}(F_{i+1})$. 
By Lemma~\ref{iff2}, $F_i$ is a $\sigma$-limit semistable object,
hence the filtration (\ref{Harder2}) gives the Harder-Narasimhan filtration. 

Let us take an object $E\in \aA^p$. We have an exact sequence, 
$$0 \lr E_{1} \lr E \lr E_{1/2} \lr 0, $$
with $E_i \in \aA_i^p$. Composing the Harder-Narasimhan filtrations of 
$E_1$, $E_{1/2}$, we obtain the Harder-Narasimhan filtration of $E$. 

(ii) Since any $\sigma$-limit semistable object is 
contained in $\aA_1^p$ or $\aA_{1/2}^p$ by Lemma~\ref{iff2}, 
the result follows from Lemma~\ref{finlen}. 
\end{proof}

\begin{rmk}\emph{
The existence of a Harder-Narasimhan filtration is 
guaranteed once we show that 
there are no infinite sequences such as (\ref{inf1}), (\ref{inf2}) 
for limit stability. 
Unfortunately this is not true. 
In fact in the notation of Remark~\ref{rmk:noe}, 
we have the following
infinite sequence, 
$$\oO_X \oplus \oO_H[1] \twoheadrightarrow \oO_X \oplus \oO_H(C)[1] 
\twoheadrightarrow \oO_X \oplus \oO_H(2C)[1] \twoheadrightarrow \cdots,$$
which satisfies that for $\sigma=i\omega$,  
$$\phi_{\sigma}(\oO_X \oplus \oO_H[1])\succ
\phi_{\sigma}(\oO_X \oplus \oO_H(C)[1])\succ
\phi_{\sigma}(\oO_X \oplus \oO_H(2C)[1])\succ \cdots.$$}
\end{rmk}

\section{Moduli spaces of limit stable objects}\label{section:moduli}
For a Calabi-Yau 3-fold $X$, let us take elements, 
$$\beta \in H^4(X, \mathbb{Q}), \quad n\in H^6(X, \mathbb{Q})\cong 
\mathbb{Q}.$$
This section is devoted to study the moduli problem of 
limit stable objects $E\in \aA^p$, satisfying 
$\det E=\oO_X$ and the following numerical condition, 
\begin{align}\label{chern}
(\ch_0(E), \ch_1(E), \ch_2(E), \ch_3(E))=(-1, 0, \beta, n).
\end{align}
Note that if $E\in \aA^p$ is limit stable satisfying 
(\ref{chern}), then $E\in \aA_{1/2}^p$ by Lemma~\ref{iff2}.
For $\sigma \in A(X)_{\mathbb{C}}$, 
 let $L_n(X, \beta)$, $L_n^{\sigma}(X, \beta)$ be the sets objects, 
\begin{align}\label{L1}
L_n(X, \beta) &\cneq \{ E\in \aA_{1/2}^p \mid \det E=\oO_X 
\mbox{ and }\ch(E)\mbox{ satisfies (\ref{chern}) }\}, \\
\label{L2}
L_n^{\sigma}(X, \beta) &\cneq 
\{ E\in L_n(X, \beta) \mid E \mbox{ is }\sigma\mbox{-limit stable }\}.
\end{align}
\begin{rmk}\label{quasi-iso}\emph{
If $E$ is quasi-isomorphic to a two term 
complex $(\oO_X \stackrel{s}{\to} F)$,
where $F$ is a pure one dimensional sheaf
located in degree zero, and 
$$\ch_2(F)=\beta, \quad \ch_3(F)=n,$$
then $E\in L_n(X, \beta)$.}
\end{rmk}
From this section, we use the following notation. 
For a relatively perfect object (cf.~\cite[Definition 2.1.1]{LIE})
$\eE \in D^b(X\times S)$
and a morphism $T\to S$, we denote by 
$\eE_T \in D^b(X\times T)$ the derived pull-back of $\eE$. 
The moduli problem of objects in the derived category has
been studied in some articles, see~\cite{Inaba}, \cite{LIE}, \cite{Tst3}. 
In this paper, we use the 
algebraic space constructed by Inaba~\cite{Inaba}, 
which provides a ``mother space'' of our moduli problem. 
 Let
$\mM$ be the functor, 
$$\mM \colon (\Sch/\mathbb{C}) \lr (\mathrm{Set}), $$
which sends a $\mathbb{C}$-scheme $S$ to a family of 
simple complexes $\eE \in D^b(X\times S)$,
(up to isomorphism,) where 
an object $E\in D^b(X)$ is called a \textit{simple complex} if 
\begin{align}\label{simple}
\Hom(E, E)=\mathbb{C}, \quad \Ext^{-1}(E, E)=0.\end{align}
Then Inaba~\cite{Inaba} shows that the \'{e}tale sheafication 
of $\mM$, denoted by $\mM^{\rm{et}}$, is an algebraic space
of locally finite type. Let $\mM_0^{\rm{et}}$ be the 
closed fiber at $[\oO_X]\in \Pic(X)$ 
with respect to the following morphism, 
$$\det \colon \mM^{\rm{et}} \ni E \longmapsto \det E \in \Pic(X).$$
Since any object $E\in L_n^{\sigma}(X, \beta)$ 
satisfies (\ref{simple}), 
there is a subfunctor
\begin{align}\label{subspace}
\lL_n^{\sigma}(X, \beta) \subset \mM_0^{\rm{et}},\end{align}
whose $S$-valued point consists of $\eE \in \mM_0^{\rm{et}}(S)$ with 
$\eE_s \in L_n^{\sigma}(X, \beta)$ for any $s\in S$.  
Our purpose in this section is to show that 
$\lL_n^{\sigma}(X, \beta)$ is an algebraic subspace of 
$\mM_0^{\rm{et}}$. 
We use the same strategy as in~\cite{Tst3}, namely we show that 
(\ref{subspace}) is an open immersion and 
$L_n^{\sigma}(X, \beta)$ is bounded. 

\subsection{Characterizations of limit stable objects}
In this paragraph,
 we give some characterizations for 
objects in $L_n(X, \beta)$ to be limit stable. 
First we show the following. 
\begin{lem}\label{lem:C}
For an object $E\in L_n(X, \beta)$, there is a subscheme $C\subset X$ 
with $\oO_C$ a pure one dimensional sheaf (or zero) such that
$\hH^{-1}(E)$ is isomorphic to the ideal sheaf $I_C\subset \oO_X$.
\end{lem}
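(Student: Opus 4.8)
The plan is to read the internal structure of $E$ off from the defining conditions, recognize $\hH^{-1}(E)$ as a rank one torsion free sheaf with trivial determinant, and then use the hypothesis $E\in\aA^p_{1/2}$ to force purity of the associated quotient of $\oO_X$.

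First I would unwind $E\in\aA^p$: one has $\hH^{-1}(E)\in\Coh_{\ge 2}(X)$, $\hH^{0}(E)\in\Coh_{\le 1}(X)$, and $\hH^i(E)=0$ otherwise, so $[E]=[\hH^0(E)]-[\hH^{-1}(E)]$ in $K(X)$. Since $\hH^0(E)$ is supported in dimension $\le 1$, its rank and first Chern class vanish and its determinant is $\oO_X$; comparing with $(\ch_0(E),\ch_1(E))=(-1,0)$ from (\ref{chern}) and with $\det E=\oO_X$ gives $\ch_0(\hH^{-1}(E))=1$, $\ch_1(\hH^{-1}(E))=0$ and $\det\hH^{-1}(E)=\oO_X$. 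Thus $\hH^{-1}(E)$ is a torsion free sheaf of rank one. As $X$ is smooth, hence locally factorial, the reflexive hull $\hH^{-1}(E)^{\vee\vee}$ is a line bundle agreeing with $\hH^{-1}(E)$ in codimension one, so it is isomorphic to $\det\hH^{-1}(E)=\oO_X$. Composing $\hH^{-1}(E)\hookrightarrow\hH^{-1}(E)^{\vee\vee}\cong\oO_X$ identifies $\hH^{-1}(E)$ with the ideal sheaf $I_C$ of a closed subscheme $C\subset X$, and $\ch_1(I_C)=0$ forces $C$ to have no divisorial component, i.e. $\dim C\le 1$.

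It remains to prove that $\oO_C$ is pure one dimensional or zero, and here $E\in\aA^p_{1/2}$ enters: by Remark~\ref{rmk:ob} this means $\Hom(\oO_x,E)=0$ for every closed point $x\in X$. The truncation triangle $\hH^{-1}(E)[1]\to E\to\hH^0(E)\to\hH^{-1}(E)[2]$ has all vertices in $\aA^p$, hence yields an exact sequence $0\to I_C[1]\to E\to\hH^0(E)\to 0$ in $\aA^p$; in particular $I_C[1]$ is a subobject of $E$ in $\aA^p$. Now suppose $Q\subset\oO_C$ is a nonzero zero dimensional subsheaf, and let $I_{C'}\subset\oO_X$ be the preimage of $Q$ under $\oO_X\twoheadrightarrow\oO_C$, an ideal sheaf with $0\to I_C\to I_{C'}\to Q\to 0$ exact, $I_C$ and $I_{C'}$ torsion free (hence in $\Coh_{\ge 2}(X)$), and $Q\in\Coh_{\le 1}(X)$. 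Shifting and rotating this triangle produces an exact sequence $0\to Q\to I_C[1]\to I_{C'}[1]\to 0$ in $\aA^p$, so $Q$ is a subobject of $I_C[1]$, hence of $E$, in $\aA^p$. Since $Q$ is a nonzero object of the abelian category $\Coh_{\le 1}(X)$ (Remark~\ref{rmk:art}) of finite length, it contains a skyscraper sheaf $\oO_x$ as a subobject, whence $\Hom(\oO_x,E)\ne 0$, contradicting $E\in\aA^p_{1/2}$. Therefore $\oO_C$ has no zero dimensional subsheaf; together with $\dim C\le 1$ this says $\oO_C$ is pure one dimensional, or $\oO_C=0$.

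The bookkeeping with ranks, Chern classes and the reflexive hull is routine. The step I expect to be the main point is the purity argument: one must transport the zero dimensional subsheaf $Q\subset\oO_C$ through the tilt, verifying that the appropriate rotation of a short exact sequence of sheaves is again a short exact sequence in $\aA^p$, so as to manufacture a nonzero morphism $\oO_x\to E$ and apply the defining property of $\aA^p_{1/2}$.
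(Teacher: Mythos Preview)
Your argument is correct and follows essentially the same route as the paper. One small wrinkle: you assert ``$\hH^{-1}(E)$ is a torsion free sheaf of rank one'' immediately after the Chern class computation, but rank one alone does not preclude two-dimensional torsion (and $\Coh_{\ge 2}(X)$ allows such torsion). Torsion freeness is exactly the part of Remark~\ref{rmk:ob} you invoke later, so you should cite $E\in\aA^p_{1/2}$ already at this step; the paper does so at the outset. For the purity argument, the paper takes the slightly shorter path $\oO_x\hookrightarrow\oO_C\hookrightarrow I_C[1]\hookrightarrow E$, using the rotation of $0\to I_C\to\oO_X\to\oO_C\to 0$ rather than your auxiliary $I_{C'}$, but the mechanism is identical.
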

\begin{proof}
Since $E\in \aA_{1/2}^p$,
 $\hH^{-1}(E)$ is a torsion free sheaf of rank one with 
trivial determinant. 
We have the injection, 
$$\hH^{-1}(E) \hookrightarrow \hH^{-1}(E)^{\vee \vee}\cong \oO_X, $$
which shows $\hH^{-1}(E)\cong I_C$ for a subscheme $C\subset X$.  
By the condition (\ref{chern}), we have $\dim C\le 1$. 
Also if $\oO_C$ contains a zero dimensional 
subsheaf, 
there is $x\in X$ with injections in $\aA^p$,  
$$\oO_x \hookrightarrow \oO_C \hookrightarrow I_C[1] \hookrightarrow E.$$
Here $\oO_C \hookrightarrow I_C[1]$ corresponds to the 
extension $0 \to I_C \to \oO_X \to \oO_C \to 0$. 
Since $E\in \aA_{1/2}^p$, this is a contradiction. 
\end{proof}

\begin{rmk}\label{turnout}
\emph{
For $E\in L_n(X, \beta)$, 
Lemma~\ref{lem:C} yields, 
\begin{align}\label{rmk:bou}
\beta &=\ch_2(\oO_C) +\ch_2(\hH^0(E)) \in H^4(X, \mathbb{Z}), \\
\notag 
n &=\ch_3(\oO_C)+\ch_3(\hH^0(E)) \in H^6(X, \mathbb{Z}) \cong \mathbb{Z}.
\end{align}
Hence below we always assume $\beta \in H^4(X, \mathbb{Z})$,
$n\in \mathbb{Z}$, and 
$\beta$ is an effective class, i.e. 
the Poincar\'e dual of the homology class of an effective one 
cycle on $X$.} 
\end{rmk}

Next we show the following. 

\begin{lem}\label{prop:char}
For $\sigma \in A(X)_{\mathbb{C}}$, an object $E\in L_n(X, \beta)$ 
is $\sigma$-limit stable if and only if the following conditions hold. 

(a) For any pure one dimensional sheaf $G\neq 0$
which admits a strict epimorphism $E\twoheadrightarrow G$ in 
$\aA_{1/2}^p$, one 
has $\phi_{\sigma}(E) \prec \phi_{\sigma}(G)$.

(b) For any pure one dimensional sheaf $F\neq 0$
which admits a strict monomorphism $F\hookrightarrow E$ in 
$\aA_{1/2}^p$, one 
has $\phi_{\sigma}(F) \prec \phi_{\sigma}(E)$.
\end{lem}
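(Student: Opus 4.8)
The plan is to prove Lemma~\ref{prop:char} by reducing the general definition of $\sigma$-limit stability (phrased via \emph{all} subobjects $F\subsetneq E$) to the two special types of destabilizing sequences (a) and (b). Since $E\in L_n(X,\beta)\subset\aA^p_{1/2}$, Lemma~\ref{iff2} already tells us that $E$ is $\sigma$-limit stable if and only if every strict monomorphism $0\neq F\hookrightarrow E$ in $\aA^p_{1/2}$ satisfies $\phi_\sigma(F)\prec\phi_\sigma(E)$; equivalently, every strict epimorphism $E\twoheadrightarrow G$ in $\aA^p_{1/2}$ with $G\neq 0$ satisfies $\phi_\sigma(E)\prec\phi_\sigma(G)$. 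So the content of the lemma is that it suffices to test these conditions only against \emph{pure one dimensional sheaves} $F$ and $G$, rather than against arbitrary objects of $\aA^p_{1/2}$.

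The ``only if'' direction is immediate: conditions (a) and (b) are instances of the stability inequality applied to particular strict sub- and quotient objects, so they follow from $\sigma$-limit stability. For the ``if'' direction, I would argue as follows. Given a strict monomorphism $F\hookrightarrow E$ in $\aA^p_{1/2}$ with $0\neq F$, I want to produce a destabilizing pure one dimensional sheaf out of it if $\phi_\sigma(F)\not\prec\phi_\sigma(E)$. Since $F\in\aA^p_{1/2}$, consider $\hH^{-1}(F)\subset\hH^{-1}(E)\cong I_C$ (using Lemma~\ref{lem:C}); as $I_C$ is torsion free of rank one, either $\hH^{-1}(F)$ has rank one — in which case the quotient $E/F$ lies in $\Coh_{\le 1}(X)$, and being a quotient in $\aA^p_{1/2}$ it is a pure one dimensional sheaf (by Remark~\ref{rmk:ob}), placing us in case (a) via the strict epimorphism $E\twoheadrightarrow E/F$ — or $\hH^{-1}(F)=0$, in which case $F=\hH^0(F)$ is itself a pure one dimensional sheaf and we are in case (b). The key point is that the torsion pair $(\aA^p_1,\aA^p_{1/2})$ and the rank-one constraint on $\hH^{-1}(E)$ force any strict sub- or quotient object to be ``close to'' either a one dimensional sheaf quotient or a one dimensional sheaf subobject, with the phase comparisons controlled by Lemma~\ref{infty} (pure one dimensional sheaves have $\phi_{\sigma_m}\to 1/2$).

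To make this precise I would treat the two cases symmetrically using the dualizing functor: by Lemma~\ref{compati}, $E\in\aA^p_{1/2}$ is $\sigma$-limit stable iff $\mathbb{D}(E)\in\aA^p_{1/2}$ is $\sigma^\vee$-limit stable, and $\mathbb{D}$ interchanges strict monomorphisms with strict epimorphisms while (by Lemma~\ref{dual}) preserving $\aA^p_{1/2}$ and, by the proof of Lemma~\ref{dual}, sending pure one dimensional sheaves to pure one dimensional sheaves. Thus condition (a) for $E$ with respect to $\sigma$ is equivalent to condition (b) for $\mathbb{D}(E)$ with respect to $\sigma^\vee$, and it is enough to analyze, say, strict monomorphisms. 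For a strict monomorphism $F\hookrightarrow E$, let $F_{\le 1}\subset F$ be its torsion subobject in the sense of the sheaf torsion of $\hH^{-1}$; splitting off $\hH^{-1}(F)$ and comparing phases via Lemma~\ref{infty} and the convexity of phases on short exact sequences, one reduces to the cases $\hH^{-1}(F)$ of rank $0$ or $1$ as above. The main obstacle I anticipate is the bookkeeping in the rank-one case: one must check that the cokernel $E/F$ really lands in $\Coh_{\le 1}(X)$ and is pure (not merely that it has one dimensional support), which uses the strictness of the monomorphism together with the characterization $\aA^p_{1/2}\cap\Coh_{\le 1}(X)=\{\text{pure one dimensional sheaves}\}$ from Remark~\ref{rmk:ob}, and one must also handle the possibility that $F$ has mixed $\hH^{-1}$ and $\hH^0$ parts by further decomposing along the torsion pair. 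Once these reductions are in place, the phase inequalities follow directly from Lemma~\ref{infty} and the definition of $\prec$.
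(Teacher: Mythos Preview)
Your proposal is correct and takes essentially the same approach as the paper: the dichotomy on whether $\hH^{-1}(F)$ has rank one (so the quotient $G=E/F$ is a pure one dimensional sheaf, reducing to (a)) or $\hH^{-1}(F)=0$ (so $F$ itself is a pure one dimensional sheaf, reducing to (b)) is exactly the paper's argument. The extra machinery you sketch in the second half---the dualizing functor and a further torsion decomposition of $F$---is unnecessary, since strictness already forces $G\in\aA^p_{1/2}$ (hence $\hH^{-1}(G)$ torsion free, so zero once it has rank $0$) and $F\in\aA^p_{1/2}\cap\Coh_{\le 1}(X)$ is automatically pure by Remark~\ref{rmk:ob}; your initial dichotomy is already a complete proof.
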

\begin{proof}

For a $\sigma$-limit stable object $E\in L_n(X, \beta)$, the 
conditions $(a)$, $(b)$ follow from the definition of limit stability. 

Next suppose that $E\in L_n(X, \beta)$ satisfies $(a)$ and $(b)$.
Applying Lemma~\ref{iff2}, it is enough to show that for any
non-trivial exact sequence in 
$\aA_{1/2}^p$, 
$$0 \lr F \lr E \lr G \lr 0, $$
we have $\phi_{\sigma}(F) \prec \phi_{\sigma}(E)$. 
If $\hH^{-1}(F)=0$, then $F$ is a pure one dimensional sheaf
and $\phi_{\sigma}(F) \prec \phi_{\sigma}(E)$ follows 
from $(b)$. If $\hH^{-1}(F) \neq 0$, then it is 
a torsion free sheaf of rank one by Lemma~\ref{lem:C}. It follows 
that $\hH^{-1}(G)$ is a torsion sheaf, hence zero 
because of $G\in \aA_{1/2}^p$. So $G$
is a pure one dimensional sheaf,
and we obtain $\phi_{\sigma}(E)\prec \phi_{\sigma}(G)$ by $(a)$.  
\end{proof}

\begin{rmk}\label{dual2}\emph{
By Lemma~\ref{dual} and the same argument as in Lemma~\ref{compati},
 the condition $(b)$ 
of Lemma~\ref{prop:char}
can be replaced by the following. 
For any pure one dimensional sheaf $G'\neq 0$ 
which admits a strict epimorphism 
$\mathbb{D}(E) \twoheadrightarrow G'$ in $\aA_{1/2}^p$, one has 
$\phi_{\sigma^{\vee}}(\mathbb{D}(E)) \prec \phi_{\sigma^{\vee}}(G')$.} 
\end{rmk}

\begin{rmk}\label{rmk:eq}\emph{
Since $E\in \aA_{1/2}^p$ is concentrated on $[-1,0]$, 
giving a strict epimorphism $E\twoheadrightarrow G$ 
as in $(a)$ 
of Lemma~\ref{prop:char} 
is equivalent to giving a surjection of sheaves 
$\hH^0(E) \twoheadrightarrow G$.}
\end{rmk}

As for strict monomorphism $F \hookrightarrow E$ 
in $(b)$ of Lemma~\ref{prop:char}, we have the following. 
\begin{lem}\label{asfor}
Let $F\hookrightarrow E$ be as in $(b)$ of Lemma~\ref{prop:char}, 
and $C\subset X$ as in Lemma~\ref{lem:C}.  
Then there are subsheaves, 
$$F_1 \subset \oO_C, \quad F_2 \subset \hH^0(E), $$
such that $F$ is written as an extension, 
$$0 \lr F_1 \lr F \lr F_2 \lr 0.$$
\end{lem}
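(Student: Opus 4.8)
The plan is to analyze the strict monomorphism $F \hookrightarrow E$ in $\aA_{1/2}^p$ by passing to cohomology sheaves and using the fact, established in Lemma~\ref{lem:C}, that $\hH^{-1}(E) \cong I_C$ with $\oO_C$ pure one dimensional (or zero). Since $F$ is a pure one dimensional sheaf, it lies in $\Coh_{\le 1}(X) \subset \aA^p$, so $\hH^{-1}(F) = 0$ and $\hH^0(F) = F$. First I would write down the long exact cohomology sequence associated to the exact triangle $F \to E \to E/F$ in $\aA^p$. Since $E$ is concentrated in degrees $-1$ and $0$ and $F$ in degree $0$, this gives
\begin{align*}
0 \lr \hH^{-1}(E) \lr \hH^{-1}(E/F) \lr F \lr \hH^0(E) \lr \hH^0(E/F) \lr 0.
\end{align*}
The key point to extract is that the map $F \to \hH^0(E)$ has a kernel which is a quotient of $\hH^{-1}(E/F) / \hH^{-1}(E)$.

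Next I would identify the two pieces. Set $F_2$ to be the image of $F$ in $\hH^0(E)$; this is automatically a subsheaf of $\hH^0(E)$, giving one of the required inclusions. Let $F_1$ be the kernel of $F \twoheadrightarrow F_2$, so that $0 \to F_1 \to F \to F_2 \to 0$ is exact, which is the desired extension presentation; it remains only to show $F_1 \subset \oO_C$. From the cohomology sequence, $F_1$ is a quotient of $\hH^{-1}(E/F)/\hH^{-1}(E) = \hH^{-1}(E/F)/I_C$. Here I would use that $E/F \in \aA^p$ (as a quotient in $\aA^p$), so $\hH^{-1}(E/F) \in \Coh_{\ge 2}(X)$; but $F_1 \subset F$ is pure one dimensional, hence lies in $\Coh_{\le 1}(X)$. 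On the other hand $\hH^{-1}(E/F)$ is torsion free of rank one (it contains $I_C$ with one-dimensional quotient $F_1$, forcing rank one and torsion-freeness via $\hH^{-1}(E/F) \hookrightarrow \hH^{-1}(E/F)^{\vee\vee}$), so $\hH^{-1}(E/F) \cong I_{C'}$ for some subscheme $C' \subset X$ with $I_C \subset I_{C'}$, i.e. $C' \subset C$. Then $F_1$ is a quotient of $I_{C'}/I_C = \oO_C / \oO_{C'}$, which embeds into... actually the cleaner route is: $F_1 \cong \Ker(F \to \hH^0(E))$ also equals $\Cok(\hH^{-1}(E) \to \hH^{-1}(E/F)) = I_{C'}/I_C$; and $I_{C'}/I_C$ sits inside $\oO_X/I_C = \oO_C$ as the subsheaf of sections supported on the complement, i.e. $I_{C'}/I_C = \Ker(\oO_C \to \oO_{C'})$, which is a subsheaf of $\oO_C$. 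This gives $F_1 \subset \oO_C$.

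The main obstacle I anticipate is the bookkeeping around purity and rank: one must rule out that $\hH^{-1}(E/F)$ acquires torsion or drops below rank one, and confirm that the quotient $I_{C'}/I_C$ really is a subsheaf of $\oO_C$ rather than merely a subquotient. The torsion-freeness follows because $\hH^{-1}(E/F)$ injects into its double dual (it is a subsheaf of the reflexive rank-one sheaf $(\hH^{-1}(E/F))^{\vee\vee} \cong \oO_X$, since a rank-one torsion-free sheaf with trivial-ish determinant on a smooth variety has reflexive hull $\oO_X$-like); alternatively one invokes $\hH^{-1}(E/F) \in \Coh_{\ge 2}(X)$ which in particular forces it to have no subsheaf supported in dimension $\le 1$, so any torsion would be at least two-dimensional, contradicting that the quotient $F_1$ by $I_C$ is at most one-dimensional. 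Once purity is secured, the embedding $I_{C'}/I_C \hookrightarrow \oO_C$ is the standard snake-lemma comparison of the two inclusions $I_C \subset I_{C'} \subset \oO_X$. I would also remark (following Remark~\ref{dual2}) that the dual statement for quotients $E \twoheadrightarrow G$ in part $(a)$ follows by applying $\mathbb{D}$, though that is not needed for this lemma.
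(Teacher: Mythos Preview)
Your proposal is correct and follows essentially the same approach as the paper: write the long exact cohomology sequence for $0 \to F \to E \to G \to 0$, set $F_2 = \operatorname{im}(F \to \hH^0(E))$ and $F_1 = \ker(F \to F_2)$, then embed $F_1$ into $\oO_C$. The paper's version is slightly more direct at the final step: rather than first identifying $\hH^{-1}(G)$ with an ideal sheaf $I_{C'}$, it simply observes that since $I_C = \hH^{-1}(E)$ is torsion free and $F_1 \in \Coh_{\le 1}(X)$, one has $F_1 \subset \hH^{-1}(E)^{\vee\vee}/\hH^{-1}(E) \cong \oO_C$ in one line. Your torsion-freeness worry is exactly the content of that observation (any torsion in $\hH^{-1}(G) \in \Coh_{\ge 2}(X)$ would be at least two-dimensional yet inject into $F_1 \in \Coh_{\le 1}(X)$), so nothing is missing.
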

\begin{proof}
Let 
$$0 \lr F \lr E \lr G \lr 0,$$
be the exact sequence in $\aA^p$. Taking cohomology, 
we obtain the exact sequences of sheaves, 
\begin{align}
\label{ex1}& 0 \lr \hH^{-1}(E) \lr \hH^{-1}(G) \lr F_1 \lr 0, \\
\label{ex2}& 0 \lr F_1 \lr F \lr F_2 \lr 0, \\
\label{ex3}& 0 \lr F_2 \lr \hH^0(E) \lr \hH^0(G) \lr 0.
\end{align}
Since $\hH^{-1}(E)$ is torsion free and $F_1 \in \Coh_{\le 1}(X)$, 
we have 
$$F_1 \subset \hH^{-1}(E)^{\vee \vee}/\hH^{-1}(E)\cong \oO_C,$$
from the sequence (\ref{ex1}). Therefore the sequence (\ref{ex2}) 
gives the desired extension. 
\end{proof}

In Lemma~\ref{prop:char}, let us write the 
condition $\phi_{\sigma}(F) \prec \phi_{\sigma}(E)$ in a simpler way. 
For $F \in \Coh_{\le 1}(X)$, let $\mu_{\sigma}(F) \in \mathbb{R}$ 
be as in Example~\ref{exam:lim} (ii). 
\begin{lem}\label{equ}
For $\sigma=B+i\omega \in A(X)_{\mathbb{C}}$, 
 $E\in L_n(X, \beta)$ and $F\in \Coh_{\le 1}(X)$, we have 
$\phi_{\sigma}(F) \prec \phi_{\sigma}(E)$, 
(resp. $\phi_{\sigma}(F)\succ \phi_{\sigma}(E)$,)
if and only 
if one of the following conditions hold. 
\begin{itemize}
\item We have the following inequality,  
\begin{align}\label{ineq1}
\mu_{\sigma}(F)<-\frac{3B\omega^2}{\omega ^3},
\quad (\mbox{resp. }\mu_{\sigma}(F)>-\frac{3B\omega^2}{\omega ^3}.)\end{align}
\item We have $\mu_{\sigma}(F)=-3B \omega^2/\omega^3$ and 
\begin{align}\label{ineq2}
\omega v_2^B(E) \mu_{\sigma}(F)< v_3^B(E), \quad 
(\mbox{resp. }\omega v_2^B(E) \mu_{\sigma}(F)> v_3^B(E).)\end{align}
\end{itemize}
\end{lem}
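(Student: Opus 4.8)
The plan is to compute directly the difference of phases $\phi_{\sigma_m}(F)-\phi_{\sigma_m}(E)$ for $m\gg 0$ using the explicit formula \eqref{formula1}, and to extract its sign from the leading asymptotics in $m$. Recall that $E\in L_n(X,\beta)$ lies in $\aA^p_{1/2}$, so $v_0^B(E)=-1$ while $v_1^B(E)=0$; thus by \eqref{formula1} we have
\begin{align*}
Z_{\sigma_m}(E)=\left(-v_3^B(E)\right)+\left(m\,\omega v_2^B(E)+\tfrac{1}{6}m^3\omega^3\right)i,
\end{align*}
whose argument tends to $\pi/2$ from below as $m\to\infty$, consistent with Lemma~\ref{infty}. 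For $F\in\Coh_{\le 1}(X)$ nonzero we have $v_0^B(F)=v_1^B(F)=0$, $v_2^B(F)=\ch_2(F)$, $v_3^B(F)=\ch_3(F)-B\ch_2(F)$, so
\begin{align*}
Z_{\sigma_m}(F)=\left(-v_3^B(F)\right)+\left(m\,\omega\ch_2(F)\right)i,
\end{align*}
whose argument also tends to $\pi/2$. Both phases converge to $1/2$, so the comparison $\phi_\sigma(F)\prec\phi_\sigma(E)$ is governed by the rate of approach, i.e. by $\cot(\pi\phi_{\sigma_m}(\cdot))=\Ree Z_{\sigma_m}(\cdot)/\Imm Z_{\sigma_m}(\cdot)$.

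The key computation is then to compare
\begin{align*}
\frac{-v_3^B(F)}{m\,\omega\ch_2(F)}\quad\text{and}\quad\frac{-v_3^B(E)}{m\,\omega v_2^B(E)+\tfrac16 m^3\omega^3}
\end{align*}
for $m\gg 0$. The second quantity is $O(1/m^3)$, hence tends to $0$; the first is $O(1/m)$. Since $\phi\mapsto\cot(\pi\phi)$ is decreasing on $(0,1)$, $\phi_{\sigma_m}(F)<\phi_{\sigma_m}(E)$ is equivalent to $\cot(\pi\phi_{\sigma_m}(F))>\cot(\pi\phi_{\sigma_m}(E))$. So I would argue: if $-v_3^B(F)/(\omega\ch_2(F))\ne 0$, that is if $\mu_\sigma(F)\ne -3B\omega^2/\omega^3$ (noting $\omega\ch_2(F)>0$ and that $-v_3^B(F)/(\omega\ch_2(F))$ and $\mu_\sigma(F)+3B\omega^2/\omega^3$ differ by a positive factor — this is the little algebraic identity to verify, using $v_3^B(F)=\ch_3(F)-B\ch_2(F)$ and $\mu_\sigma(F)=(\ch_3(F)-B\ch_2(F))/(\omega\ch_2(F))$), then the $O(1/m)$ term dominates and the sign of $\phi_{\sigma_m}(F)-\phi_{\sigma_m}(E)$ is, for $m\gg 0$, the sign of $-(-v_3^B(F)/(\omega\ch_2(F)))$, giving precisely \eqref{ineq1}. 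When $\mu_\sigma(F)=-3B\omega^2/\omega^3$, the leading $O(1/m)$ contributions cancel and one must expand to the next order in $1/m$; here the relevant comparison becomes $\omega v_2^B(E)\mu_\sigma(F)$ versus $v_3^B(E)$, yielding \eqref{ineq2}. The resp.\ statements follow by reversing every inequality.

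Concretely, the steps in order: (1) record the closed forms of $Z_{\sigma_m}(E)$ and $Z_{\sigma_m}(F)$ from \eqref{formula1} using $v_0^B(E)=-1$, $v_1^B(E)=0$ and the support condition on $F$; (2) reduce ``$\phi_\sigma(F)\prec\phi_\sigma(E)$'' to a comparison of $\cot(\pi\phi_{\sigma_m})$, valid because both phases lie near $1/2\in(0,1)$ where $\cot(\pi\cdot)$ is strictly decreasing and real-analytic; (3) verify the identity relating $-v_3^B(F)/(\omega\ch_2(F))$ to $\mu_\sigma(F)+3B\omega^2/\omega^3$ up to the positive factor $1/\omega^3$ (up to sign bookkeeping), which shows \eqref{ineq1} is exactly the condition ``leading $1/m$ term has the right sign''; (4) in the boundary case do the second-order expansion in $1/m$ to land on \eqref{ineq2}; (5) dispose of the remaining degenerate situations, e.g.\ $\ch_2(F)=0$ (then $F$ is zero-dimensional, $\phi_{\sigma_m}(F)\to 1$, so $\phi_\sigma(F)\succ\phi_\sigma(E)$ automatically, matching the ``resp.'' side of \eqref{ineq1} since $\mu_\sigma(F)=+\infty$), and reconcile the conventions. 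The main obstacle I anticipate is purely bookkeeping: keeping the signs straight through the chain ``phase $\prec$'' $\Leftrightarrow$ ``$\cot$ reversed'' $\Leftrightarrow$ ``$\Ree/\Imm$ reversed'', together with the minus signs hidden in $v_0^B(E)=-1$ and in $Z_{\sigma_m}(\cdot)=-\int(\cdots)$, and making sure the $m\to\infty$ limits are interpreted with the convention $\phi_{\sigma_m}\in(1/4,5/4)$ of Lemma~\ref{lem:asy} rather than modulo $2$.
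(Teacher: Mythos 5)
Your overall strategy is the same as the paper's: reduce $\phi_{\sigma}(F)\prec\phi_{\sigma}(E)$ to the inequality $\Ree Z_{\sigma_m}(F)/\Imm Z_{\sigma_m}(F)>\Ree Z_{\sigma_m}(E)/\Imm Z_{\sigma_m}(E)$ for $m\gg 0$ (legitimate, since both phases approach $1/2$ where $\cot(\pi\,\cdot\,)$ is strictly decreasing) and then expand in $m$. But there is a genuine computational error at the very first step that derails everything after it. You assert $v_1^B(E)=0$; in fact
\[
v_1^B(E)=\ch_1(E)-B\ch_0(E)=0-B\cdot(-1)=B ,
\]
because $\ch_0(E)=-1$. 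Consequently, by (\ref{formula1}) the real part of $Z_{\sigma_m}(E)$ is $-v_3^B(E)+\tfrac12 m^2\omega^2 B$, not the constant $-v_3^B(E)$ you wrote. That quadratically growing term is precisely the source of the threshold $-3B\omega^2/\omega^3$ in (\ref{ineq1}): keeping it, the comparison of ratios becomes, after clearing the positive denominators,
\[
\tfrac16\, m^2\omega^3\left(\mu_{\sigma}(F)+\frac{3B\omega^2}{\omega^3}\right)< -\omega v_2^B(E)\,\mu_{\sigma}(F)+v_3^B(E),
\]
and asking that this hold for all $m\gg 0$ is exactly the dichotomy (\ref{ineq1})/(\ref{ineq2}).

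With your (incorrect) formula the $E$-side ratio is $O(1/m^3)$, so the leading comparison would give the condition $\mu_{\sigma}(F)<0$ rather than $\mu_{\sigma}(F)<-3B\omega^2/\omega^3$; these agree only when $B\omega^2=0$. The ``little algebraic identity'' you defer to step (3) --- that $-v_3^B(F)/(\omega\ch_2(F))$ agrees with $\mu_{\sigma}(F)+3B\omega^2/\omega^3$ up to a positive factor --- is false: since $v_3^B(F)=\ch_3(F)-B\ch_2(F)$ one has $-v_3^B(F)/(\omega\ch_2(F))=-\mu_{\sigma}(F)$ on the nose, so the shift by $3B\omega^2/\omega^3$ cannot be manufactured on the $F$-side; it must come from the $\tfrac12 m^2\omega^2 v_1^B(E)$ term you dropped. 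Once $v_1^B(E)=B$ is restored, your plan (including the boundary case (\ref{ineq2}) and the degenerate case of zero-dimensional $F$) goes through and coincides with the paper's proof.
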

\begin{proof}
The condition $\phi_{\sigma}(F) \prec \phi_{\sigma}(E)$ is 
equivalent to 
\begin{align}\label{equi}
\frac{\Ree Z_{\sigma_m}(F)}{\Imm Z_{\sigma_m}(F)}> 
\frac{\Ree Z_{\sigma_m}(E)}{\Imm Z_{\sigma_m}(E)}, \end{align}
for $m\gg 0$. 
Since we have 
\begin{align*}
v^B(F) &=(0, 0, \ch_2(F), \ch_3(F)-B\ch_2(F)), \\
v^B(E) &=(-1, B, v_2^B(F), v_3^B(F)), 
\end{align*}
the inequality (\ref{equi}) is equivalent to 
$$-\frac{\mu_{\sigma}(F)}{m}
=\frac{-\ch_3(F)+B\ch_2(F)}{m\omega \ch_2(F)}
>
\frac{m^2 \omega^2 B/2 -v_3^B(E)}{m^3\omega^3/6 +m\omega v_2^B(E)},
$$
for $m\gg 0$ by the formula (\ref{formula}). The above 
inequality is equivalent to 
$$\frac{1}{6}m^2 \omega^3 \left( 
\mu_{\sigma}(F)+\frac{3\omega^2 B}{\omega^3} \right) 
< -\omega v_2^{B}(E)\mu_{\sigma}(F)+v_3^{B}(E),$$
for $m\gg 0$. Therefore (\ref{ineq1}) or (\ref{ineq2})
must be satisfied. 
\end{proof}

\subsection{Evaluations of numerical classes}\label{Nbeta}
In this paragraph, we evaluate the numerical classes of 
$\hH^{-1}(E)$, $\hH^{0}(E)$ for $E\in L_n^{\sigma}(X, \beta)$. 
Below we fix an ample divisor $H$ on $X$,
and set
\begin{align}\label{N(beta)}
\nN(\beta)\cneq \{ \beta' \in H^4(X, \mathbb{Z}) \mid 
\beta' \mbox{ is an effective class with } 0 \le \beta' \cdot H \le 
\beta \cdot H\}.
\end{align}
The following Lemma~\ref{lem:fin} and Lemma~\ref{-infty}
 seem well-known, but 
we give the proof for the reader's convenience. 
\begin{lem}\label{lem:fin}
The set $\nN(\beta)$ is a finite set. 
\end{lem}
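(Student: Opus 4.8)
The plan is to show that $\nN(\beta)$ is finite by realizing it as the set of lattice points in a bounded region of the finite-dimensional vector space $\NS(X)_{\mathbb{R}}$, where the cycle classes live. First I would recall that an effective class $\beta' \in H^4(X,\mathbb{Z})$ is a nonnegative integral combination of classes of irreducible curves; since $X$ is projective, the space $N_1(X)_{\mathbb{R}}$ of one-cycles modulo numerical equivalence is finite-dimensional, and the effective classes form a cone $\NE(X)$ inside it whose closure $\overline{\NE}(X)$ is a strongly convex closed cone (strong convexity because $H$ is ample, so $H \cdot \gamma > 0$ for every nonzero $\gamma \in \overline{\NE}(X)$). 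Thus the integral points of $\overline{\NE}(X)$ form a finitely generated submonoid, but more to the point, I only need boundedness.

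The key step is the intersection with the ``slab'' $\{\, \gamma : 0 \le H\cdot \gamma \le \beta\cdot H\,\}$. Inside the finite-dimensional real vector space, the set $\{\gamma \in \overline{\NE}(X) : H \cdot \gamma \le \beta\cdot H\}$ is the intersection of the strongly convex closed cone $\overline{\NE}(X)$ with a closed half-space whose bounding hyperplane meets the cone only at the origin; such an intersection is compact (this is the standard fact that a strongly convex cone capped by a hyperplane transverse to it is bounded). A compact subset of a finite-dimensional real vector space contains only finitely many points of the lattice $H^4(X,\mathbb{Z})$ (image in $N_1(X)$), and $\nN(\beta)$ is a subset of this finite set, hence finite.

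The main obstacle — really the only point requiring care — is making precise the claim that $\overline{\NE}(X)$ is strongly convex and that capping it with $\{H \cdot \gamma \le c\}$ yields a bounded set; this is where ampleness of $H$ (via the Kleiman criterion, or simply positivity of $H$ on effective curves together with the fact that $H^4(X,\mathbb{Z})/\text{torsion}$ is a lattice) enters. One must also note that effectiveness forces $\beta' \cdot H \ge 0$ automatically, so the lower bound in the definition of $\nN(\beta)$ is not an extra constraint. Once strong convexity is in hand, compactness of the cap and finiteness of lattice points in a compact set finish the argument immediately; no delicate estimate is needed.
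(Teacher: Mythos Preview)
Your proof is correct and follows essentially the same idea as the paper's: show that $\nN(\beta)$ lies in a compact subset of the finite-dimensional space $H^4(X,\mathbb{R})$, hence contains only finitely many lattice points. The only difference is dual packaging --- you invoke strong convexity of $\overline{\NE}(X)$ (via Kleiman) and cap with the single hyperplane $\{H\cdot\gamma=\beta\cdot H\}$, while the paper uses the equivalent fact that the ample cone is open, so finitely many conditions $\beta'\cdot H'\ge 0$ for various ample $H'$, together with $\beta'\cdot H\le\beta\cdot H$, already cut out a compact polytope.
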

\begin{proof}
For any ample divisor $H'$ on $X$, we have 
$$\nN(\beta) \subset \{ \beta' \in H^4(X, \mathbb{R}) \mid 
\beta'\cdot H' \ge 0\}.$$
Since the ample cone is an open cone, one can find a 
compact convex polytope in $H^4(X, \mathbb{R})$
 which contains $\nN(\beta)$. 
Therefore $\nN(\beta)$ is a finite set. 
\end{proof}
Next we set $m(\beta)\in [-\infty, \infty)$ as follows, 
$$m(\beta)\cneq \mathrm{inf} \{ \ch_3(\oO_C) \mid 
C\subset X \mbox{ \rm{satisfies} }\dim C=1, [C] \in \nN(\beta)\}.$$
\begin{lem}\label{-infty}
We have $m(\beta)>-\infty$. 
\end{lem}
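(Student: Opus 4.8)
The plan is to rewrite $\ch_3(\oO_C)$ as an Euler characteristic, reduce to the case of pure one dimensional subschemes, and then quote the classical upper bound on the arithmetic genus of a pure curve of bounded degree in projective space.

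First, since $X$ is a Calabi-Yau $3$-fold we have $\td_1(X)=c_1(X)/2=0$, while $\oO_C$ has vanishing rank and first Chern class; hence Hirzebruch-Riemann-Roch gives
$$\chi(\oO_C)=\int_X\ch(\oO_C)\,\td_X=\ch_3(\oO_C),$$
so that $m(\beta)=\inf\{\chi(\oO_C)\mid \dim C=1,\ [C]\in\nN(\beta)\}$ and it suffices to bound $\chi(\oO_C)$ from below. Next I would pass to pure subschemes: letting $T\subset\oO_C$ be the maximal zero dimensional subsheaf, the quotient $\oO_C/T$ is the structure sheaf of a pure one dimensional subscheme $C'\subset X$ with $[C']=[C]\in\nN(\beta)$, and from $0\to T\to\oO_C\to\oO_{C'}\to 0$ one gets $\chi(\oO_C)=\length(T)+\chi(\oO_{C'})\ge\chi(\oO_{C'})$. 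Thus it is enough to bound $\chi(\oO_{C'})$ from below over all pure one dimensional $C'\subset X$ with $[C']\in\nN(\beta)$.

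For the final step I would use that $\nN(\beta)$ is finite by Lemma~\ref{lem:fin}, so $[C']\cdot H\le\beta\cdot H$; fixing $m_0$ with $m_0H$ very ample and embedding $X\hookrightarrow\mathbb{P}^N$ accordingly makes $C'$ a pure one dimensional subscheme of $\mathbb{P}^N$ of degree at most $d_0=m_0(\beta\cdot H)$. The arithmetic genus $p_a(C')=1-\chi(\oO_{C'})$ of a pure one dimensional subscheme of $\mathbb{P}^N$ is bounded above by a constant depending only on $d_0$ and $N$ (for instance $p_a(C')\le\binom{d_0-1}{2}$), which gives the required lower bound $\chi(\oO_{C'})\ge 1-\binom{d_0-1}{2}$, uniform in $C'$. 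Combining with the reductions above yields $m(\beta)\ge 1-\binom{d_0-1}{2}>-\infty$. As an essentially equivalent alternative for this step, one may instead invoke Serre duality on $X$: since $\omega_X\cong\oO_X$ and $\oO_{C'}$ is pure of codimension two, $h^1(X,\oO_{C'})=\dim\Ext^2_X(\oO_{C'},\oO_X)=h^0(X,\eE xt^2_X(\oO_{C'},\oO_X))$, and $\eE xt^2_X(\oO_{C'},\oO_X)$ is again pure one dimensional with the same $\ch_2$; a uniform Castelnuovo-Mumford regularity bound for such sheaves in $\mathbb{P}^N$ then bounds this $h^0$, whence $\chi(\oO_{C'})=h^0(\oO_{C'})-h^1(\oO_{C'})\ge -h^1(\oO_{C'})$ is bounded below.

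The two reductions are routine, so the substantive ingredient is the boundedness input in the last step, namely the classical fact that a pure one dimensional subscheme (or pure one dimensional sheaf) of bounded degree in $\mathbb{P}^N$ has arithmetic genus bounded above in terms of its degree. I expect this to be the only real obstacle; the passage to pure subschemes beforehand is convenient precisely because this is the generality in which the bound is standardly available, and it is harmless since removing embedded points only increases $\chi(\oO_C)$.
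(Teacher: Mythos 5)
Your argument is correct in outline but takes a genuinely different route from the paper's. You reduce the lemma to the classical upper bound on the arithmetic genus of a pure one dimensional subscheme of $\mathbb{P}^N$ of bounded degree; the paper (following a suggestion of the referee) avoids any such projective-geometry input by a dimension count on the Hilbert scheme: if some $C'$ with $[C']=\beta$ and $\ch_3(\oO_{C'})=n-k$ exists, then adding $k$ disjoint reduced points to $C'$ sweeps out a $3k$-dimensional family inside $\Hilb_n(X,\beta)$, which is a projective scheme of fixed Hilbert polynomial, so $k\le \dim\Hilb_n(X,\beta)/3$; one then runs over the finitely many classes in $\nN(\beta)$. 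The paper's proof buys economy (only finite-dimensionality of the Hilbert scheme is used, and no purification is needed), while yours isolates the genus bound as the one substantive input. Your two reductions are both correct -- Hirzebruch--Riemann--Roch with $c_1(X)=0$ identifies $\ch_3(\oO_C)$ with $\chi(\oO_C)$, and the displayed inequality $\chi(\oO_C)=\length(T)+\chi(\oO_{C'})\ge\chi(\oO_{C'})$ is the right one -- though your closing sentence states the direction backwards: purification \emph{decreases} $\chi$, which is precisely why a lower bound in the pure case suffices.

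Two cautions on the final step. First, the family of pure one dimensional subschemes of $\mathbb{P}^N$ of degree $\le d_0$ is \emph{not} bounded: already the Ferrand double lines in $\mathbb{P}^3$ obtained from surjections $\oO_L(-1)^{\oplus 2}\twoheadrightarrow\oO_L(a)$, $a\ge -1$, have degree $2$ and $\chi(\oO_C)=a+2$ unbounded above. So the genus bound is genuinely a one-sided estimate and cannot be deduced from boundedness of the family. Second, and for the same reason, your alternative via Serre duality does not work as stated: there is no uniform Castelnuovo--Mumford regularity bound for pure one dimensional sheaves of bounded multiplicity (the sheaf $\oO_L(a)$ with $a\to-\infty$ has multiplicity $1$ and unbounded regularity), so bounding $h^0$ of $\eE xt^2_X(\oO_{C'},\oO_X)$ requires more than purity and bounded $\ch_2$ -- and supplying the missing input is essentially equivalent to the genus bound itself. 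You should therefore rely on the first route (the Castelnuovo-type bound for pure subschemes, proved e.g.\ by generic hyperplane sections and the filtration by powers of the nilradical), which is true and classical but is the real content of your proof; the paper's floating-point argument is worth knowing precisely because it sidesteps it.
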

\begin{proof}
Let $\Hilb_n(X, \beta)$ be the Hilbert scheme
of one dimensional subschemes $C\subset X$ with 
$$\beta=[C], \quad n=\ch_3(\oO_C).$$
If $\Hilb_{n-k}(X, \beta)$ is non-empty for $k>0$, 
then we have 
$$\dim \Hilb_n(X, \beta)\ge 3k, $$
by adding $k$-floating points to a subscheme 
$C'\subset X$ with $[C']=\beta$, $n-k=\ch_3(\oO_{C'})$. 
Then the boundedness of $\Hilb_n(X, \beta)$ implies 
that $\Hilb_{n-k}(X, \beta)=\emptyset$ for $k\gg 0$. 
\end{proof}
Finally we show the following lemma. 
\begin{lem}\label{lem:final}
(i) The image of the map, 
$$L_n(X, \beta) \ni E \longmapsto (\ch_2(\hH^{-1}(E)), \ch_2(\hH^0(E)))
\in H^4(X, \mathbb{Z})^{\oplus 2},$$
is a finite set. 

(ii) For $E\in L_n(X, \beta)$, let $\beta'=-\ch_2(\hH^{-1}(E))$. 
Then we have, 
$$\ch_3(\hH^0(E)) \le n-m(\beta') \le n-m(\beta).$$
Moreover for $\sigma =B+i\omega
\in A(X)_{\mathbb{C}}$, the image of the map, 
$$L_n^{\sigma}(X, \beta) \ni E \longmapsto
 (\ch_3(\hH^{-1}(E)), \ch_3(\hH^0(E)))
\in \mathbb{Z}^{\oplus 2},$$
is a finite set. 
\end{lem}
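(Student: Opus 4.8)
The plan is to reduce both finiteness assertions to the finiteness of $\nN(\beta)$ (Lemma~\ref{lem:fin}) and to the lower bound of Lemma~\ref{-infty}, and to extract the one remaining inequality from $\sigma$-limit stability via Lemma~\ref{prop:char}. First I would prove (i) by pinning down $\ch_2$ of the two cohomology sheaves. By Lemma~\ref{lem:C} one may write $\hH^{-1}(E)\cong I_C$ for a subscheme $C\subset X$ with $\oO_C$ a pure one dimensional sheaf (or zero), so $\ch_2(\hH^{-1}(E))=-\ch_2(\oO_C)=-[C]$ with $[C]$ an effective class. Reading off the numerical condition (\ref{chern}) as in (\ref{rmk:bou}) of Remark~\ref{turnout} gives $\ch_2(\hH^0(E))=\beta-[C]$, which is again effective (or zero) since $\hH^0(E)\in\Coh_{\le 1}(X)$. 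Hence $[C]$ is effective with $0\le [C]\cdot H\le\beta\cdot H$, i.e. $[C]\in\nN(\beta)$, a finite set by Lemma~\ref{lem:fin}; and $\ch_2(\hH^0(E))=\beta+\ch_2(\hH^{-1}(E))$ is recovered from $\ch_2(\hH^{-1}(E))$. This proves (i).

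Next I would establish the inequality in (ii). Setting $\beta'=[C]=-\ch_2(\hH^{-1}(E))$, a short Chern character computation using (\ref{chern}) gives $\ch_3(\hH^{-1}(E))=-\ch_3(\oO_C)$ and $\ch_3(\hH^0(E))=n-\ch_3(\oO_C)$, so in particular $\ch_3(\hH^{-1}(E))=\ch_3(\hH^0(E))-n$. Since $\beta'\in\nN(\beta')$, the definition of $m(\beta')$ gives $\ch_3(\oO_C)\ge m(\beta')$, hence $\ch_3(\hH^0(E))\le n-m(\beta')$; and since $\beta'\cdot H\le\beta\cdot H$ we have $\nN(\beta')\subseteq\nN(\beta)$, so $m(\beta')\ge m(\beta)$ and $\ch_3(\hH^0(E))\le n-m(\beta)$, a finite upper bound by Lemma~\ref{-infty}.

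It then remains to bound $\ch_3(\hH^0(E))$ from below for $E\in\lL_n^{\sigma}(X,\beta)$, and this is where limit stability is used. I would let $T\subset\hH^0(E)$ be the maximal zero dimensional subsheaf and $G=\hH^0(E)/T$, a pure one dimensional sheaf (or zero). If $G\neq 0$, the composition $E\twoheadrightarrow\hH^0(E)\twoheadrightarrow G$ is an epimorphism in $\aA^p$ whose kernel, being a subobject of $E$ in $\aA^p$, lies in $\aA_{1/2}^p$; thus it is a strict epimorphism as in condition $(a)$ of Lemma~\ref{prop:char}, and $\sigma$-limit stability forces $\phi_{\sigma}(E)\prec\phi_{\sigma}(G)$. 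By Lemma~\ref{equ} this gives $\mu_{\sigma}(G)\ge -3B\omega^2/\omega^3$, so that $\ch_3(G)=\mu_{\sigma}(G)\,(\omega\,\ch_2(G))+B\,\ch_2(G)$ is bounded below by a quantity depending only on $\ch_2(G)=\ch_2(\hH^0(E))$ (which ranges in the finite set from (i)), $B$ and $\omega$, using $\omega\,\ch_2(G)>0$. Hence $\ch_3(\hH^0(E))=\ch_3(G)+\length(T)\ge\ch_3(G)$ is bounded below (and $\ch_3(\hH^0(E))=\length(\hH^0(E))\ge 0$ if $G=0$). Together with the upper bound above and with $\ch_3(\hH^{-1}(E))=\ch_3(\hH^0(E))-n$, and since both classes are integers, the second map in (ii) has finite image.

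The Chern class bookkeeping and the two finiteness inputs, Lemma~\ref{lem:fin} and Lemma~\ref{-infty}, are routine; the one delicate step is the lower bound above. The hard part will be to check carefully that the canonical pure one dimensional quotient $E\twoheadrightarrow G$ is genuinely a strict epimorphism in $\aA_{1/2}^p$, so that Lemma~\ref{prop:char}$(a)$ applies, and then to unwind Lemma~\ref{equ}, including its equality-of-slopes branch, to confirm that the resulting bound on $\ch_3(G)$ involves only the finitely many possible values of $\ch_2(G)$ and quantities fixed by $\sigma$, $\beta$ and $n$. The degenerate cases $\hH^{-1}(E)=\oO_X$ (so $\beta'=0$) and $\hH^0(E)$ purely zero dimensional should then be disposed of directly along the same lines.
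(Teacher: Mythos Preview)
Your proposal is correct and follows essentially the same route as the paper: part (i) via the observation that both $\ch_2$ classes land in $\nN(\beta)$, the upper bound in (ii) via $\hH^{-1}(E)=I_C$ and the definition of $m(\beta')$, and the lower bound by feeding a one-dimensional quotient of $\hH^0(E)$ into condition~$(a)$ of Lemma~\ref{prop:char} and reading off Lemma~\ref{equ}. The only difference is that the paper applies the slope inequality directly to $\hH^0(E)$, whereas you (more carefully) first pass to the pure one-dimensional quotient $G=\hH^0(E)/T$; this is a harmless refinement, and your justification that the kernel of $E\twoheadrightarrow G$ lies in $\aA^p_{1/2}$ is correct since $(\aA^p_1,\aA^p_{1/2})$ is a torsion pair (Lemma~\ref{tor}) and subobjects of torsion-free objects are torsion-free.
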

\begin{proof}
(i) For $E\in L_n^{\sigma}(X, \beta)$, the equality (\ref{rmk:bou})
implies, 
$$(\ch_2(\hH^{-1}(E)), \ch_2(\hH^0(E))) \in \nN(\beta)\times \nN(\beta).$$
Hence Lemma~\ref{lem:fin} yields the result. 

(ii) For $E\in L_n(X, \beta)$, 
we have $\hH^{-1}(E)=I_C$ where $C\subset X$ is as in 
Lemma~\ref{lem:C}.
Since $[C]=\beta'$, we have 
\begin{align*}
\ch_3(\hH^0(E)) &=n-\ch_3(\oO_C) \\
& \le n-m(\beta').
\end{align*}
Also since $\beta' \in \nN(\beta)$, we have 
$n-m(\beta') \le n-m(\beta)$. 
Suppose that $E$ is $\sigma$-limit stable. 
If $\hH^0(E)$ is non-zero, Lemma~\ref{prop:char}, 
Remark~\ref{rmk:eq} and Lemma~\ref{equ}
show, 
$$-\frac{3B\omega^2}{\omega^3}\le \mu_{\sigma}(\hH^0(E)) 
=\frac{\ch_3(\hH^0(E))-B\ch_2(\hH^0(E))}{\omega \ch_2(\hH^0(E))}.$$
Since $\ch_2(\hH^0(E)) \in \nN(\beta)$, 
 the value $\ch_3(\hH^0(E))\in \mathbb{Z}$
  is also bounded below by the above inequality and
  Lemma~\ref{lem:fin}. 
\end{proof}

\subsection{Boundedness of limit stable objects}
In this paragraph, we show the 
boundedness of limit stable objects, which is relevant 
to the existence of the moduli space.  Recall that 
a set of objects $\sS \subset D^b(X)$ is 
\textit{bounded} if there is a finite type $\mathbb{C}$-scheme 
$Q$ and an object $\eE \in D^b(X\times Q)$ such that 
any object $E\in \sS$ is isomorphic to $\eE_q$ for 
some $q\in Q$. We first show the boundedness
of some subsets of objects in $\Coh_{\le 1}(X)$. 
For $\beta \in H^4(X, \mathbb{Z})$ and $n\in \mathbb{Z}$, 
we set 
$$S_n(X, \beta)\cneq \{ E\in \Coh_{\le 1}(X) \mid 
\ch_2(E)=\beta, \ch_3(E)=n\}.$$
Also let us fix $\sigma =B+i\omega
\in A(X)_{\mathbb{C}}$ and $\mu \in \mathbb{R}$. 
\begin{lem}\label{lem:set of objects}
(i) The following 
set of objects is bounded.
\begin{align*}
S_n(X, \beta, \sigma, \mu)=
\left\{ E\in S_n(X, \beta) : \begin{array}{l}
 \mu_{\sigma}(G) \ge \mu \mbox{ \rm{for any surjection} } \\
 E \twoheadrightarrow G \mbox{ \rm{in} } \Coh_{\le 1}(X) 
 \end{array}
   \right\}.
\end{align*}
(ii) The following 
set of objects is bounded. 
\begin{align*}
S_n'(X, \beta, \sigma, \mu)=
\left\{ G\in \Coh_{\le 1}(X) \cap \aA_{1/2}^p
: \begin{array}{l}
\mbox{\rm{there is} }E\in S_n(X, \beta) \mbox{ \rm{and a surjection} }  \\
E \twoheadrightarrow G \mbox{ in }\Coh_{\le 1}(X)
\mbox{ \rm{and} } \mu_{\sigma}(G) \le \mu
 \end{array}
   \right\}.
\end{align*}
\end{lem}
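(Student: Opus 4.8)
The plan is to reduce both boundedness statements to the standard fact that, for a fixed Hilbert polynomial, the family of one-dimensional sheaves whose quotients have slope bounded below by a fixed constant is bounded — in other words, to semistable-type boundedness for sheaves on the surfaces cut out by hyperplane sections. Concretely, for part (i), I would first observe that every $E\in S_n(X,\beta)$ has fixed numerical invariants $(\ch_2,\ch_3)=(\beta,n)$ and that $\beta$ is effective, so $E$ is supported on a one-dimensional subscheme with bounded degree $\beta\cdot H$. The slope function $\mu_\sigma$ differs from the usual reduced Hilbert-polynomial slope only by the fixed twist by $B$, so the condition ``$\mu_\sigma(G)\ge\mu$ for every quotient $E\twoheadrightarrow G$'' is exactly a $(B,\omega)$-twisted semistability-type bound: it forces every quotient of $E$ (in $\Coh_{\le 1}(X)$) to have $\ch_3(G)$ bounded below in terms of $\ch_2(G)$, where $\ch_2(G)$ ranges over the finite set $\nN(\beta)$ by Lemma~\ref{lem:fin}. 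Grothendieck's lemma (or the Le~Potier/Simpson boundedness for semistable sheaves of fixed Hilbert polynomial on a projective scheme) then applies: a family of purely-supported — or arbitrarily supported — one-dimensional sheaves on $X$ with fixed $\ch_2$, bounded $\ch_3$, and all quotients of bounded-below slope is bounded. Taking the finite union over the finitely many possible values of $\ch_2(G)$ and over the finitely many Harder--Narasimhan types, $S_n(X,\beta,\sigma,\mu)$ is bounded.

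For part (ii), I would produce each $G\in S_n'(X,\beta,\sigma,\mu)$ as a quotient of some $E\in S_n(X,\beta)$ with $\mu_\sigma(G)\le\mu$. Here the key point is that $G\in\Coh_{\le 1}(X)\cap\aA^p_{1/2}$ means $G$ is a \emph{pure} one-dimensional sheaf (Remark~\ref{rmk:ob}), so $G$ has no zero-dimensional subsheaves; combined with $\ch_2(G)\le\beta$ in $\nN(\beta)$ (which is finite) and the upper bound $\mu_\sigma(G)\le\mu$, the invariant $\ch_3(G)$ is bounded \emph{above}. To get a lower bound on $\ch_3(G)$, I would use purity: for a pure one-dimensional sheaf of fixed $\ch_2(G)$, twisting down by $H$ repeatedly shows $\chi(G)$ cannot be too negative (equivalently, $\ch_3(G)$ is bounded below by a Serre-vanishing/Riemann--Roch argument once the support degree is fixed), much as in the proof of Lemma~\ref{-infty}. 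With both $\ch_2(G)$ in a finite set and $\ch_3(G)$ confined to a finite interval, the pure one-dimensional sheaves $G$ with these fixed invariants form a bounded family (again by Simpson-type boundedness, since purity makes the relevant Quot scheme finite type). Hence $S_n'(X,\beta,\sigma,\mu)$ is bounded.

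The main obstacle I anticipate is the lower bound on $\ch_3(G)$ in part (ii): a priori a quotient $E\twoheadrightarrow G$ could have very negative $\ch_3(G)$ if $G$ were allowed arbitrary torsion, so it is essential to exploit that $G$ is pure (no zero-dimensional subsheaves) to close off this direction. I expect this to run parallel to the Hilbert-scheme argument of Lemma~\ref{-infty}, replacing $\oO_C$ by a pure one-dimensional sheaf of fixed support class and invoking boundedness of the relevant Quot/moduli stack to conclude that only finitely many values of $\ch_3(G)$ occur among sheaves that actually arise as such quotients. Once that finiteness is in hand, the remaining steps are routine appeals to Grothendieck's boundedness lemma and the finiteness of $\nN(\beta)$ from Lemma~\ref{lem:fin}.
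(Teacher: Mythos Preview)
For part~(i) your outline matches the paper's proof: pass to the $(B,\omega)$-twisted Harder--Narasimhan filtration of $E$, use that each factor has $\ch_2\in\nN(\beta)$ and that the hypothesis gives $\mu_\sigma(E/E_i)\ge\mu$ for every $i$, so only finitely many Chern characters can occur among the factors; then cite boundedness of twisted semistable sheaves with fixed numerics.

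For part~(ii) there is a genuine gap. You claim that purity of $G$, together with $\ch_2(G)$ fixed, forces $\ch_3(G)$ to be bounded below ``by a Serre-vanishing/Riemann--Roch argument \dots\ much as in the proof of Lemma~\ref{-infty}''. This is false: for any smooth rational curve $C\subset X$ the sheaves $G_m=\oO_C(-m)$ are pure one-dimensional with $\ch_2(G_m)=[C]$ fixed, yet $\ch_3(G_m)=1-m\to-\infty$. The argument of Lemma~\ref{-infty} is specific to structure sheaves $\oO_{C'}$ of subschemes --- it exploits that adding floating points increases the dimension of the Hilbert scheme --- and does not extend to arbitrary pure one-dimensional sheaves. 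The quotient hypothesis does not rescue the bound either, since $E_m=\oO_C(m)\oplus\oO_C(-m)\in S_2(X,2[C])$ surjects onto $G_m$, so such $G_m$ really do occur in $S_2'(X,2[C],\sigma,\mu)$ for $m$ large.

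The paper's proof takes a different route and makes no attempt to bound $\ch_3(G)$ from below. From $\mu_\sigma(G)\le\mu$ and $\ch_2(G)\in\nN(\beta)$ it extracts only an upper bound $\mu_{i\omega}(G)\le\mu'$, and then invokes \cite[Lemma~1.7.9]{Hu} (a Grothendieck-type boundedness result) directly, using the purity of $G$ as the structural hypothesis that lemma requires rather than as a source of a numerical lower bound on $\ch_3$. Your strategy of reducing to a finite list of Hilbert polynomials cannot succeed here, because that list is genuinely infinite.
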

\begin{proof}
(i) For $E\in S_n(X, \beta, \sigma, \mu)$, let
$$0=E_0 \subset E_1 \subset \cdots \subset E_l=E
$$ be the Harder-Narasimhan filtration with respect to  
$(B, \omega)$-twisted semistablity. (Or equivalently
Harder-Narasimhan 
filtration with respect to the induced stability 
condition $(\Coh_{\le 1}(X), Z_{\sigma})$.) 
Note that for $F_i=E_i/E_{i-1}$, we have 
$\ch_2(F_i) \in \nN(\beta)$, and the function which 
sends $E \in S_n(X, \beta, \sigma, \mu)$ to the 
number of $(B, \omega)$-twisted semistable factors $l$ is bounded. 
By the definition of $S_n(X, \beta, \sigma, \mu)$, we see 
$$\mu_{\sigma}(E) \ge \mu_{\sigma}(E/E_i) \ge \mu, $$
for any $i$. Therefore for each $i$, 
we have only finite number of possibilities for the pair,
$$(\ch(E_i), \ch(E/E_i)) \in H^{\ast}(X, \mathbb{Q})^{\oplus 2}. $$
Hence the possibilities for $\ch(F_i) \in H^{\ast}(X, \mathbb{Q})$
is also bounded. On the other hand, the set of 
$(B, \omega)$-twisted semistable sheaves $F\in \Coh_{\le 1}(X)$ 
with a fixed numerical class is bounded. 
(See for instance~\cite[Lemma 3.8]{ToBPS}.)
Therefore $S_n(X, \beta, \sigma, \mu)$ is bounded. 

(ii) For $G\in S_n'(X, \beta, \sigma, \mu)$, 
note that $\ch_2(G) \in \nN(\beta)$. 
Hence $\mu_{\sigma}(G) \le \mu$ implies that there is $\mu' \in \mathbb{R}$, 
which depends only on $B$ and $\beta$, 
such that 
$$\mu_{i\omega}(G)=
\frac{\ch_3(G)}{\omega \ch_2(G)} \le \mu', $$
for any $G\in S_n'(X, \beta, \sigma, \mu)$.
Since $G$ is a pure sheaf, one can apply~\cite[Lemma 1.7.9]{Hu}, 
and conclude that $S_n'(X, \beta, \sigma, \mu)$ is bounded. 
\end{proof}
In the following we show the boundedness of $L_n^{\sigma}(X, \beta)$. 
\begin{prop}\label{prop:bounded}
For $\sigma=B+i\omega \in A(X)_{\mathbb{C}}$, the set of objects
$L_n^{\sigma}(X, \beta)$ is bounded.
\end{prop}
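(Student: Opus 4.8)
The plan is to reduce the boundedness of $L_n^{\sigma}(X,\beta)$ to the boundedness results already established, namely Lemma~\ref{lem:set of objects} together with the boundedness of the Hilbert scheme. The starting point is that for $E\in L_n^{\sigma}(X,\beta)$ we have the canonical exact triangle $\hH^{-1}(E)[1]\to E\to \hH^0(E)$, with $\hH^{-1}(E)=I_C$ for a subscheme $C\subset X$ of dimension $\le 1$ (Lemma~\ref{lem:C}), and $\hH^0(E)\in\Coh_{\le 1}(X)$. Since an extension of bounded families is bounded, it suffices to bound the set of possible $\hH^{-1}(E)$ and the set of possible $\hH^0(E)$ separately. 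By Lemma~\ref{lem:final}, the numerical classes $\ch_2(\hH^{-1}(E))$, $\ch_3(\hH^{-1}(E))$, $\ch_2(\hH^0(E))$, $\ch_3(\hH^0(E))$ all lie in a finite set, so both $I_C$ and $\hH^0(E)$ have finitely many possible Chern characters. For $\hH^{-1}(E)=I_C$, boundedness then follows because $C$ ranges over the union of finitely many Hilbert schemes $\Hilb_{n'}(X,\beta')$ with $\beta'\in\nN(\beta)$, each of which is of finite type; equivalently the ideal sheaves $I_C$ form a bounded family.

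The remaining and main task is to bound $\hH^0(E)$, i.e.\ to show that the set
\[
\{\hH^0(E) \mid E\in L_n^{\sigma}(X,\beta)\}\subset \Coh_{\le 1}(X)
\]
is bounded. Here is where limit stability enters. By Remark~\ref{rmk:eq}, a strict epimorphism $E\twoheadrightarrow G$ in $\aA_{1/2}^p$ onto a pure one dimensional sheaf $G$ is the same as a surjection of sheaves $\hH^0(E)\twoheadrightarrow G$ with $G$ pure; and $\sigma$-limit stability of $E$ gives, via Lemma~\ref{prop:char}(a) and Lemma~\ref{equ}, the lower bound $\mu_{\sigma}(G)\ge -3B\omega^2/\omega^3$ for every such quotient $G$. (One must also check that the maximal destabilizing quotient of $\hH^0(E)$ in $\Coh_{\le 1}(X)$ is automatically pure, so that the constraint genuinely applies; this follows because if a quotient had a zero-dimensional subsheaf one could first pass to the quotient by it, and the relevant inequality is controlled by the one-dimensional part.) Since $\ch_2(\hH^0(E))$ ranges over the finite set $\nN(\beta)$ and $\ch_3(\hH^0(E))$ is bounded (Lemma~\ref{lem:final}(ii)), with $\mu=-3B\omega^2/\omega^3$ the sheaf $\hH^0(E)$ lies in $S_{n'}(X,\beta',\sigma,\mu)$ for one of finitely many $(\beta',n')$. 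Lemma~\ref{lem:set of objects}(i) then says each such set is bounded, and a finite union of bounded sets is bounded.

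Putting these together: the Chern characters of $\hH^{-1}(E)$ and $\hH^0(E)$ lie in explicitly finite sets; for each choice, $\hH^{-1}(E)=I_C$ ranges over a bounded family (finitely many Hilbert schemes) and $\hH^0(E)$ ranges over a bounded family by Lemma~\ref{lem:set of objects}(i); hence $E$, being an extension $\hH^{-1}(E)[1]\to E\to\hH^0(E)$ of members of bounded families with bounded $\Ext^1$, itself ranges over a bounded family. Concretely one builds a finite-type parameter scheme $Q$ by taking the relevant Hilbert schemes and Quot-type/flag parameter schemes for the $\hH^0$ part, then forming the projective bundle (or affine space) of extension classes $\Ext^1(\hH^0,\hH^{-1}[1])$ over their product; the universal complex over $X\times Q$ realizes every $E\in L_n^{\sigma}(X,\beta)$ as a fiber. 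I expect the only real subtlety to be the purity/normalization point noted above — ensuring that the limit-stability inequality of Lemma~\ref{prop:char}(a), which a priori concerns quotients in $\aA_{1/2}^p$, translates into exactly the hypothesis ``$\mu_{\sigma}(G)\ge\mu$ for all surjections $\hH^0(E)\twoheadrightarrow G$ in $\Coh_{\le 1}(X)$'' required by Lemma~\ref{lem:set of objects}(i); everything else is assembling already-proven boundedness statements.
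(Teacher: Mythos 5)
Your proof is correct and follows essentially the same route as the paper: reduce to bounding $\hH^{-1}(E)$ and $\hH^0(E)$ separately, invoke Lemma~\ref{lem:final} to fix the numerical classes, use the Hilbert scheme for $\hH^{-1}(E)=I_C$, and use Lemma~\ref{prop:char}, Remark~\ref{rmk:eq} and Lemma~\ref{equ} to place $\hH^0(E)$ in $S_{n''}(X,\beta'',\sigma,\mu)$ with $\mu=-3B\omega^2/\omega^3$, which is bounded by Lemma~\ref{lem:set of objects}(i). The purity point you flag is handled exactly as you suggest (a non-pure one-dimensional quotient has $\mu_{\sigma}$ at least that of its pure quotient), so there is no gap.
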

\begin{proof}
It is enough to show the boundedness of the following sets
of sheaves, 
\begin{align}
\label{set1}
&\{ \hH^{-1}(E) \mid E\in L_n^{\sigma}(X, \beta)\}, \\
\label{set2}
&\{ \hH^{0}(E) \mid E \in L_n^{\sigma}(X, \beta)\}.
\end{align}
Also by Lemma~\ref{lem:final}, 
it is enough to show the boundedness of the above sets of objects 
satisfying
\begin{align*}
(\ch_2(\hH^{-1}(E)), \ch_3(\hH^{-1}(E))) &=(-\beta', -n'), \\
(\ch_2(\hH^0(E)), \ch_3(\hH^0(E))) &=(\beta'', n''), \end{align*}
for fixed numerical classes $(\beta', n')$, $(\beta'', n'')$. 
For $E\in L_n^{\sigma}(X, \beta)$, we have 
$\hH^{-1}(E)\in I_{n'}(X, \beta')$
by Lemma~\ref{lem:C},  
in particular the set of sheaves (\ref{set1}) is bounded. 
Here $I_{n'}(X, \beta')$ is 
the Hilbert scheme
as in the proof of Lemma~\ref{-infty}. 
Also Lemma~\ref{prop:char}, Remark~\ref{rmk:eq} and Lemma~\ref{equ} show
$$\hH^0(E) \in S_{n''}(X, \beta'', \sigma, \mu), $$
where $\mu=-3B\omega^2/\omega^3$. 
Therefore by Lemma~\ref{lem:set of objects} (i), 
the set of sheaves (\ref{set2}) is 
also bounded. 
\end{proof}

\subsection{Openness of limit stability}
The purpose of this paragraph is 
to show that the embedding
$\lL_n^{\sigma}(X, \beta)\subset
\mM_0^{\rm{et}}$ given in (\ref{subspace}) 
is an open immersion, and complete the proof
that $\lL_n^{\sigma}(X, \beta)$ is an algebraic space of finite type. 
First we see the openness of objects in $\aA^{p}$ and $\aA_{i}^p$. 
\begin{lem}\label{lem:open}
For a variety $S$ and an object $\eE \in D^b(X\times S)$, 
the sets 
$$S^{\circ}=\{ s\in S \mid \eE_s \in \aA^p\},
 \quad S_{i}^{\circ}=\{ s\in S \mid \eE_s \in \aA^{p}_i\},$$
are open subsets in $S$.
\end{lem}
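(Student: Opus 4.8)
The plan is to characterize membership in $\aA^p$ and $\aA^p_i$ via vanishing conditions on cohomology sheaves, and then invoke semicontinuity. First I would recall that for $E \in D^b(X\times S)$ whose derived restrictions $\eE_s$ live in a bounded window $[a,b]$, the locus where $\eE_s$ lies in a given heart of a bounded t-structure is characterized fiberwise. For $\aA^p = \langle \Coh_{\ge 2}(X)[1], \Coh_{\le 1}(X)\rangle$, the condition $\eE_s \in \aA^p$ amounts to three things: $\hH^i(\eE_s) = 0$ for $i \neq -1, 0$; $\hH^{-1}(\eE_s) \in \Coh_{\ge 2}(X)$, i.e.\ $\hH^{-1}(\eE_s)$ has no subsheaf supported in dimension $\le 1$; and there is no further condition on $\hH^0(\eE_s) \in \Coh_{\le 1}(X)$ beyond being of dimension $\le 1$. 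The cleanest reformulation I would use: $\eE_s \in \aA^p$ if and only if $\hH^i(\eE_s) = 0$ for $i \notin \{-1,0\}$, $\dim\Supp\hH^0(\eE_s) \le 1$, and $\Hom(F, \eE_s) = 0$ for all $F \in \Coh_{\le 1}(X)[1]$ (equivalently $\Hom_X(\oO_x, \hH^{-1}(\eE_s)) = 0$ and $\Hom_X(\text{1-diml sheaves}, \hH^{-1}(\eE_s)) = 0$).

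The key steps, in order: (1) Reduce to showing that the locus where each of the defining conditions holds is open. (2) For the vanishing $\hH^i(\eE_s) = 0$ with $i \notin \{-1,0\}$ and the dimension bound on $\hH^0$ and $\hH^{-1}$: these are handled by the standard semicontinuity / cohomology-and-base-change machinery — the support of $\hH^i$ of a perfect complex is closed in $X \times S$, so its image in $S$ (when $X$ is proper) is closed; and "being concentrated in a given degree range" together with support conditions on the outermost cohomology is an open condition on $S$. (3) For the purity-type conditions defining $\aA^p_i$: by Remark~\ref{rmk:ob}, $\eE_s \in \aA^p_1$ iff $\hH^0(\eE_s)$ is zero-dimensional and $\hH^{-1}(\eE_s)$ is torsion, while $\eE_s \in \aA^p_{1/2}$ iff $\hH^{-1}(\eE_s)$ is torsion-free and $\Hom(\oO_x, \eE_s) = 0$ for all $x$. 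Each of these is again a locally-closed-to-open condition: "$\hH^{-1}$ torsion-free" is open because the torsion subsheaf of a flat family jumps up on closed loci (its rank, or its support dimension, is upper semicontinuous); "$\hH^0$ zero-dimensional" is open by the support argument; and the $\Hom(\oO_x, -)$-vanishing translates into a $\hH^0$-support condition plus purity of $\hH^{-1}$. (4) Assemble: a finite intersection of open sets is open, giving $S^\circ$ and $S_i^\circ$ open.

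The main obstacle will be step (3), specifically making the openness of the torsion-free / purity conditions rigorous in families without a flatness hypothesis on the individual cohomology sheaves $\hH^j(\eE)$ over $S$. One cannot assume $\hH^{-1}(\eE)$ is $S$-flat, so I would first restrict to an open dense $S' \subset S$ over which the relevant cohomology sheaves become flat (generic flatness), establish the statement there, and then run a noetherian induction on $S \setminus S'$ — or, better, argue directly with the complex $\eE$ using the theory of relative perfect complexes and the semicontinuity of $\dim\Hom(\oO_x, \eE_s)$ and of support dimensions, which behave well for perfect complexes by cohomology-and-base-change for $\dR\HOM$. A subtle point: the condition "$\hH^{-1}(\eE_s)$ has no subsheaf of dimension $\le 1$" (i.e.\ $\hH^{-1}(\eE_s) \in \Coh_{\ge 2}(X)$) is equivalent to the vanishing of $\Hom_X(F, \eE_s)$ for $F$ ranging over a bounded family of test sheaves, and boundedness (combined with the openness of vanishing of $\dR\HOM$-cohomology, which is standard) is what makes this a genuinely open condition rather than merely constructible. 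I expect that once the perfectness of $\eE$ and the properness of $X$ are used to control $\dR\HOM(F, \eE)$ for test objects $F$, the rest is a routine assembly of semicontinuity statements.
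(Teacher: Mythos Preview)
Your approach is plausible and would likely work, but it is genuinely different from the paper's, and considerably more laborious.

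For $S^{\circ}$, the paper does not unpack the semicontinuity argument at all: it simply quotes Arcara--Bertram--Lieblich \cite[Appendix~A]{AB}, where it is shown that the torsion pair $(\Coh_{\le 1}(X), \Coh_{\ge 2}(X))$ defines an \emph{open stack of torsion theories}, and then invokes \cite[Theorem~A.3]{AB} to conclude that the locus of fibers lying in the tilt $\aA^p$ is open. What you sketch in steps (1)--(2) is essentially a reconstruction of what that appendix does.

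The real divergence is in how you handle $S_i^{\circ}$. You propose to verify directly the fiberwise conditions of Remark~\ref{rmk:ob} (torsion-freeness of $\hH^{-1}$, zero-dimensionality of $\hH^{0}$, vanishing of $\Hom(\oO_x,-)$), and you correctly flag that the purity/torsion-free conditions are delicate in families when the cohomology sheaves of $\eE$ are not flat. The paper sidesteps this entirely with a one-line duality trick: since $\mathbb{D}(\eE)_s \cong \mathbb{D}(\eE_s)$ and Lemma~\ref{dual} gives
\[
S_{1/2}^{\circ} = S^{\circ} \cap \{\, s \mid \mathbb{D}(\eE)_s \in \aA^p \,\}, \qquad
S_{1}^{\circ} = S^{\circ} \cap \{\, s \mid \mathbb{D}(\eE)_s \in \aA^p[-1] \,\},
\]
the openness of $S_i^{\circ}$ follows immediately from the openness of $S^{\circ}$ applied to $\eE$ and to $\mathbb{D}(\eE)$ (and its shift). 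No separate analysis of torsion subsheaves in families is needed. Your route trades this structural insight for a hands-on semicontinuity argument; it buys self-containedness at the cost of the noetherian-induction/flattening-stratification bookkeeping you anticipate, whereas the paper's route exploits the compatibility of the torsion pair $(\aA_1^p,\aA_{1/2}^p)$ with $\mathbb{D}$ already established in Lemma~\ref{dual}.
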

\begin{proof}
As in~\cite[Appendix A, Example 1]{AB}, the torsion theory 
$(\Coh_{\le 1}(X), \Coh_{\ge 2}(X))$ defines an open stack of 
torsion theories. 
Then~\cite[Theorem A.3]{AB} shows that $S^{\circ}$ is open. 
Let us show that $\sS_i^{\circ}$ is open in $S$. 
We set $\mathbb{D}(\eE)$ to be
$$\mathbb{D}(\eE)=
\dR \hH om(\eE, \oO_{X\times S}[2]).$$
Then we have $\mathbb{D}(\eE)_{s}\cong \mathbb{D}(\eE_s)$. 
By Lemma~\ref{dual}, $S_i^{\circ}$ are written as 
\begin{align*}
&S_1^{\circ}=S^{\circ}\cap \{ s\in S \mid \mathbb{D}(\eE)_s \in 
\aA^p[-1]\}, \\
&S_{1/2}^{\circ}
=S^{\circ}\cap \{ s\in S \mid \mathbb{D}(\eE)_s \in 
\aA^p\}.
\end{align*}
Therefore the openness of $S_{i}^{\circ}$ follows from 
the openness of $S^{\circ}$. 
\end{proof}
Next we show the boundedness of destabilizing objects 
for a family of objects in $\aA_{1/2}^p$. 
Suppose that $\eE \in D^b(X\times S)$ satisfies
$\eE_s \in L_n(X, \beta)$ for any $s\in S$. 
We set the sets of objects $\dD_{e}$, $\dD_{m}$
as follows. 
\begin{align*}
  \dD_{e}&=
\left\{ G\in \Coh_{\le 1}(X)\cap \aA_{1/2}^p: \begin{array}{l}
\mbox{\rm{there is} }s \in S \mbox{ \rm{and a strict epimorphism} }  \\
\eE_s \twoheadrightarrow G \mbox{ \rm{in} } \aA_{1/2}^p
\mbox{ \rm{with} } \phi_{\sigma}(G) \preceq \phi_{\sigma}(\eE_s)
 \end{array}
   \right\}. \\
  \dD_{m}&=
\left\{ F\in \Coh_{\le 1}(X)\cap \aA_{1/2}^p: \begin{array}{l}
\mbox{\rm{there is} }s \in S \mbox{ \rm{and a strict monomorphism} }  \\
F \hookrightarrow \eE_s \mbox{ \rm{in} } \aA_{1/2}^p
\mbox{ \rm{with} } \phi_{\sigma}(F) \succeq \phi_{\sigma}(\eE_s)
 \end{array}
   \right\}.   
 \end{align*}
We have the following. 
\begin{lem}
The sets of objects $\dD_{e}$, $\dD_{m}$ are bounded.
\end{lem}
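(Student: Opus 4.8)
The plan is to bound $\dD_e$ and $\dD_m$ separately, and then to use the compatibility of the torsion pair and limit stability with the dualizing functor $\mathbb{D}$ (Lemma~\ref{dual}, Lemma~\ref{compati}, Remark~\ref{dual2}) to reduce the boundedness of $\dD_m$ to that of $\dD_e$ for the dualized family. So the heart of the matter is $\dD_e$.

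For $\dD_e$, first I would spread out: by noetherian induction on $S$ we may assume $S$ is reduced and irreducible, and by stratifying (each stratum giving a family of the same type) it suffices to produce, for each such $S$, a bound on one stratum. For a fixed $s\in S$ a strict epimorphism $\eE_s \twoheadrightarrow G$ in $\aA_{1/2}^p$ onto a pure one-dimensional sheaf $G$ is, by Remark~\ref{rmk:eq}, the same as a surjection of sheaves $\hH^0(\eE_s) \twoheadrightarrow G$. Hence $G$ is a quotient of $\hH^0(\eE_s)$ in $\Coh_{\le 1}(X)$. Now $\hH^0(\eE_s)$ has numerical class $(\beta'',n'')$ lying in a finite set as $s$ varies (Lemma~\ref{lem:final}), so $\ch_2(G) \in \nN(\beta)$ is bounded and $\ch_3(G)$ is bounded above by $n''$. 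The hypothesis $\phi_{\sigma}(G)\preceq \phi_{\sigma}(\eE_s)$ translates, via Lemma~\ref{equ}, into the inequality $\mu_{\sigma}(G)\le -3B\omega^2/\omega^3$, i.e.\ an upper bound on $\mu_{\sigma}(G)$, equivalently (since $\ch_2(G)$ ranges over a finite set and $B$ is fixed) an upper bound on $\mu_{i\omega}(G)=\ch_3(G)/\omega\ch_2(G)$. Thus $G$ is precisely an object of $S_{n''}'(X,\beta'',\sigma,\mu)$ with $\mu=-3B\omega^2/\omega^3$ — or rather, it is covered by finitely many such sets as $(\beta'',n'')$ and the further numerical splitting vary over their finite ranges — and Lemma~\ref{lem:set of objects}~(ii) gives the boundedness of $\dD_e$.

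For $\dD_m$: a strict monomorphism $F\hookrightarrow \eE_s$ in $\aA_{1/2}^p$ with $F$ pure one-dimensional dualizes, under $\mathbb{D}$, to a strict epimorphism $\mathbb{D}(\eE_s)\twoheadrightarrow \mathbb{D}(F)$; here $\mathbb{D}(\eE_s)\in\aA_{1/2}^p$ by Lemma~\ref{dual} (applied fiberwise as in Lemma~\ref{lem:open}) and $\mathbb{D}(F)$ is again a pure one-dimensional sheaf (the $\eE xt^2$ of a pure one-dimensional sheaf). The phase condition $\phi_{\sigma}(F)\succeq \phi_{\sigma}(\eE_s)$ becomes $\phi_{\sigma^{\vee}}(\mathbb{D}(\eE_s))\preceq \phi_{\sigma^{\vee}}(\mathbb{D}(F))$ by the computation $Z_{\sigma^{\vee}}(\mathbb{D}(E))=-\overline{Z_\sigma(E)}$ from the proof of Lemma~\ref{compati}. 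Since $\mathbb{D}(\eE)=\dR\hH om(\eE,\oO_{X\times S}[2])$ is again an object of $D^b(X\times S)$ with $\mathbb{D}(\eE)_s\cong\mathbb{D}(\eE_s)\in L_n(X,\beta)$ for all $s$ (using $\det\hH^{-1}(\mathbb{D}(\eE_s))$ trivial and the numerical identities), the set $\dD_m$ is carried by $\mathbb{D}$ bijectively (up to the dual of a pure sheaf, a reversible operation preserving boundedness) onto a set of the same shape as $\dD_e$ for the family $\mathbb{D}(\eE)$ and the stability parameter $\sigma^{\vee}$. Hence the already-established boundedness of $\dD_e$ applies.

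The main obstacle is the bookkeeping in the $\dD_e$ step: one must be careful that as $s$ ranges over $S$ the numerical invariants of $\hH^0(\eE_s)$ — and hence the admissible range of $(\ch_2(G),\ch_3(G))$ — really do land in a finite set, which is exactly what Lemma~\ref{lem:final} is for, and then check that the quotient $G$ of $\hH^0(\eE_s)$ together with the $\mu_\sigma$-bound fits the hypotheses of Lemma~\ref{lem:set of objects}~(ii) (in particular that $G\in\Coh_{\le 1}(X)\cap\aA_{1/2}^p$, i.e.\ $G$ pure, which is part of the definition of a strict epimorphism onto a pure sheaf). Once this is set up the two lemmas do all the work; the dualization argument for $\dD_m$ is then essentially formal given Lemma~\ref{dual} and the phase computation already recorded in the proof of Lemma~\ref{compati}.
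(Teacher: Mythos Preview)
Your proposal is correct and follows essentially the same approach as the paper: reduce $\dD_m$ to $\dD_e$ via the dualizing functor (as in Remark~\ref{dual2}), then for $\dD_e$ stratify so that $\hH^0(\eE)$ is flat, use Remark~\ref{rmk:eq} to identify $G$ as a quotient of $\hH^0(\eE_s)$, translate the phase bound via Lemma~\ref{equ} into $\mu_\sigma(G)\le -3B\omega^2/\omega^3$, and conclude by Lemma~\ref{lem:set of objects}~(ii). One minor slip: the dualized phase inequality should read $\phi_{\sigma^\vee}(\mathbb{D}(F))\preceq \phi_{\sigma^\vee}(\mathbb{D}(\eE_s))$ (since $z\mapsto -\bar z$ reverses phase order), which is precisely the $\dD_e$-condition for the family $\mathbb{D}(\eE)$ at $\sigma^\vee$; also note $\mathbb{D}(\eE_s)\in L_{-n}(X,\beta)$ rather than $L_n(X,\beta)$, though this does not affect the argument.
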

\begin{proof}
Applying the dualizing functor $\mathbb{D}$, 
 it 
suffices to show the boundedness of $\dD_{e}$
as in Remark~\ref{dual2}. 
By taking a flattening stratification, we may 
assume that $\hH^i(\eE)$ is flat over $S$ for any $i$. 
As in Remark~\ref{rmk:eq}, any object $G\in \dD_{e}$
is obtained as a surjection, 
\begin{align}\label{obtain}
\hH^{0}(\eE_s)=\hH^0(\eE)_s \twoheadrightarrow G.\end{align}
Let $(\ch_2(\hH^0(\eE)_s), \ch_3(\hH^0(\eE)_s))=(\beta', n')$. 
By (\ref{obtain}) and Lemma~\ref{equ}, we have
$$G\in S_{n'}'(X, \beta', \sigma, \mu), $$
where $\mu=-3B\omega^2/\omega^3$. 
By Lemma~\ref{lem:set of objects} (ii),
 the set of objects $\dD_{e}$ is bounded. 
\end{proof}
Based on the above lemma, we show the following proposition. 
\begin{prop}\label{prop:construct}
(i) There exist a finite type $S$-scheme $\pi_{e}\colon 
Q_{e} \to S$, $\gG_{e} \in \Coh(X\times Q_{e})$, 
and a morphism $u_{e}\colon \eE_{Q_{e}}\to \gG_{e}$ such that 
\begin{itemize}
\item For $q\in Q_{e}$, the morphism 
$u_{{e}, q}\colon \eE_q \to \gG_{{e}, q}$ is a strict 
epimorphism in $\aA_{1/2}^p$. 
\item Any strict epimorphism $\eE_s \twoheadrightarrow G$ in $\aA_{1/2}^p$
for $G\in \dD_{e}$ is isomorphic to $u_{{e}, q}$ for some 
$q\in \pi_{e}^{-1}(s)$. 
\end{itemize}

(ii) There exist a finite type $S$-scheme $\pi_{m}\colon 
Q_{m} \to S$, $\fF_{m} \in \Coh(X\times Q_{m})$, 
and a morphism $u_{m}\colon \fF_{m} \to \eE_{Q_{m}}$ such that 
\begin{itemize}
\item For $q\in Q_{m}$, the morphism 
$u_{{m}, q}\colon \fF_{{m}, q}\to \eE_q$ is a strict 
monomorphism in $\aA_{1/2}^p$. 
\item Any strict monomorphism $F\hookrightarrow \eE_s$ in $\aA_{1/2}^p$
for $F\in \dD_{m}$ is isomorphic to $u_{{m}, q}$ for some 
$q\in \pi_{m}^{-1}(s)$. 
\end{itemize}
\end{prop}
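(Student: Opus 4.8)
The plan is to build the parameter spaces $Q_e$ and $Q_m$ as suitable Quot schemes over $S$, exploiting that both $\dD_e$ and $\dD_m$ are bounded by the previous lemma, and that strict epi/monomorphisms in $\aA_{1/2}^p$ can be expressed sheaf-theoretically. For part (i), I would first apply a flattening stratification so that $\hH^0(\eE)$ is flat over $S$; by Remark~\ref{rmk:eq} a strict epimorphism $\eE_s\twoheadrightarrow G$ in $\aA_{1/2}^p$ is the same datum as a surjection of sheaves $\hH^0(\eE)_s\twoheadrightarrow G$ with $G$ a pure one dimensional sheaf. Since $\dD_e$ is bounded, the numerical classes $\ch(G)=(0,0,\beta_0,n_0)$ that occur form a finite set; for each such class, Grothendieck's relative Quot scheme $\Quot(\hH^0(\eE)/S)$ with the prescribed Hilbert polynomial is projective over $S$, hence of finite type, and carries a universal quotient $\hH^0(\eE)_{Q}\twoheadrightarrow \gG$. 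I then take $Q_e$ to be the locally closed subscheme of the disjoint union of these Quot schemes cut out by the open conditions ``$\gG_q$ is a pure one dimensional sheaf lying in $\aA_{1/2}^p$'' (using Lemma~\ref{lem:open} applied to $\gG$, viewed as a complex in degree $0$, together with openness of purity) and ``$u_{e,q}$ is a strict epimorphism'', i.e. $\Ker(u_{e,q})\in\aA_{1/2}^p$ — again an open condition by Lemma~\ref{lem:open} applied to the cone of $u_e$. Restricting $\gG$ and the universal quotient map to $Q_e$ gives $\gG_e$ and $u_e$, and the two bullet points hold by construction: fibrewise $u_{e,q}$ is a strict epimorphism, and conversely any strict epimorphism onto some $G\in\dD_e$ gives a point of one of the Quot schemes whose fibre conditions are satisfied, hence a point of $Q_e$ over the correct $s$.

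For part (ii) I would dualize, exactly as in the proof of the preceding lemma and as indicated in Remark~\ref{dual2}: applying $\mathbb{D}$ to $\eE$ produces $\mathbb{D}(\eE)\in D^b(X\times S)$ with $\mathbb{D}(\eE)_s\cong\mathbb{D}(\eE_s)$, and by Lemma~\ref{dual} and Lemma~\ref{compati} a strict monomorphism $F\hookrightarrow\eE_s$ in $\aA_{1/2}^p$ corresponds to a strict epimorphism $\mathbb{D}(\eE_s)\twoheadrightarrow\mathbb{D}(F)$ in $\aA_{1/2}^p$, with $\mathbb{D}(F)$ again a pure one dimensional sheaf (Lemma~\ref{dual}) whose numerical class ranges over a finite set since $\dD_m$ is bounded. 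Running the construction of part (i) with $\mathbb{D}(\eE)$ in place of $\eE$ yields a finite type $S$-scheme $Q_m$, a sheaf on $X\times Q_m$, and a universal strict epimorphism; applying $\mathbb{D}$ once more (and noting $\mathbb{D}^2\cong\id$ up to the appropriate shift on the relevant subcategory, by the involutivity of the dualizing functor) converts this back into the universal strict monomorphism $u_m\colon\fF_m\to\eE_{Q_m}$, with $\fF_m\in\Coh(X\times Q_m)$ after a further flattening stratification to ensure $\fF_m$ sits in a single cohomological degree. The two bullet points for $Q_m$ then follow from those for $Q_e$ via the equivalence of Lemma~\ref{compati}.

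The main obstacle I anticipate is the bookkeeping needed to pass between ``strict epimorphism in $\aA_{1/2}^p$'' and ``surjection of coherent sheaves'' in families, and to check that the relevant conditions — membership in $\aA^p$, in $\aA_{1/2}^p$, purity of the quotient/sub, and the strictness of $u$ (i.e. the kernel or cokernel again lying in $\aA_{1/2}^p$) — are all open in $S$, so that $Q_e$ and $Q_m$ are honest locally closed subschemes of Quot schemes and hence of finite type. The openness of the $\aA^p$- and $\aA_{1/2}^p$-conditions is supplied by Lemma~\ref{lem:open} (applied both to the relevant sheaf placed in a fixed degree and to the cone of the universal map), and the finiteness of the set of numerical invariants to range over is exactly the content of the boundedness of $\dD_e$ and $\dD_m$; what remains is to assemble these inputs carefully. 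A secondary, purely technical point is the flattening-stratification step needed on the $\mathbb{D}$-side so that $\fF_m$ is a genuine coherent sheaf rather than a two-term complex, but this is routine once boundedness is in hand.
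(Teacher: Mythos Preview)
Your approach is correct but differs from the paper's in both parts. The paper treats (i) and (ii) symmetrically, following~\cite[Proposition 3.17]{Tst3}: it fixes a bounded family $\fF$ over a finite-type scheme $Q$ parameterizing the objects of $\dD_m$, passes to an affine $Q'\to Q\times S$ over which the functor $T\mapsto\hH^0(\dR q_{T\ast}\dR\hH om(\fF_T,\eE_T))$ is represented by a vector bundle $\mathbb{V}(\uU)$, and takes $Q_m\subset\mathbb{V}(\uU)$ to be the open locus where the cone of the universal morphism lies in $\aA_{1/2}^p$. Part (i) is handled by the same Hom-scheme argument with source and target swapped. Your route instead exploits the sheaf-theoretic reduction of Remark~\ref{rmk:eq} to replace the Hom-scheme for (i) by a classical relative Quot scheme of $\hH^0(\eE)$, and then reduces (ii) to (i) via the dualizing functor $\mathbb{D}$ and Lemma~\ref{dual}, rather than building a separate Hom-scheme. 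Each approach has its advantages: yours is more concrete for (i) since Quot schemes come equipped with universal quotients and projectivity over the base for free, while the paper's Hom-scheme method is uniform and does not rely on the accident that strict epimorphisms onto one-dimensional sheaves factor through $\hH^0$. Your duality reduction for (ii) is clean but costs the extra verification that $\mathbb{D}(\gG_e')$ is again an honest sheaf in families (which does hold, since a perfect complex whose fibres are all sheaves is a sheaf flat over the base, so your flattening stratification is in fact unnecessary here); the paper's direct construction sidesteps this. One small point: in (i) you should note explicitly that the universal map $u_e$ is obtained by precomposing the Quot quotient with the truncation $\eE_{Q_e}\to\hH^0(\eE)_{Q_e}$, and that the latter commutes with base change precisely because of your flattening stratification.
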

\begin{proof}
The proof is essentially same as in~\cite[Proposition 3.17]{Tst3}, 
so we only give the outline of the construction of
 $Q_{m}$. Since $\dD_{m}$ is bounded, there 
is a $\mathbb{C}$-scheme of finite type $Q$ and 
$\fF \in \Coh(X\times Q)$, flat over $Q$, such that 
any $F\in \dD_{m}$ is isomorphic to $\fF_q$ for some $q\in Q$. 
We may assume that $\phi_{\sigma}(\fF_q) \succeq \phi_{\sigma}(\eE_s)$
and $\fF_q$ is a pure one dimensional sheaf 
for any $q\in Q$. 
 Arguing as in~\cite[Proposition 3.17]{Tst3},
there is an affine scheme of finite type $Q'$ and a morphism
$Q' \to Q\times S$ such that
\begin{itemize}
\item $Q' \to Q\times S$ is bijective on closed points. 
\item There exists a locally free sheaf $\uU$ on $Q'$ such that 
the functor 
$$(T\to Q') \longmapsto 
\hH^0(\dR q_{T\ast} \dR \hH om (\fF_T, \eE_T))\in \Coh(T),$$
is represented by the affine bundle $\mathbb{V}(\uU) \to Q'$, 
where $q_T \colon X\times T \to T$ is the projection.  
\end{itemize}
Here $\fF_T$, $\eE_T$ are obtained by the base changes
of $\fF$, $\eE$ for the 
following morphisms respectively, 
$$T \to Q' \to Q\times S \stackrel{p_1}{\to}Q, \quad 
T \to Q' \to Q\times S \stackrel{p_2}{\to}S, $$
and $p_1$, $p_2$ are projections. 
Let $u\colon
\fF_{\mathbb{V}(\uU)}\to \eE_{\mathbb{V}(\uU)}$ be 
the universal morphism and take the distinguished triangle, 
$$\fF_{\mathbb{V}(\uU)} \stackrel{u}{\lr} \eE_{\mathbb{V}(\uU)} \lr \gG.$$
Note that $u_q \colon \fF_{\mathbb{V}(\uU), q} \to \eE_{\mathbb{V}(\uU), q}$
is a strict monomorphism in $\aA_{1/2}^p$ if and only if 
$\gG_q \in \aA_{1/2}^p$. 
We construct $Q_{m}$ as 
 $$Q_{m} \cneq \{ q\in \mathbb{V}(\uU) \mid \gG_q \in \aA_{1/2}^p\}.$$ 
Then $Q_{m}$ is an open subscheme of $\mathbb{V}(\uU)$
by Lemma~\ref{lem:open}, in particular it is of finite type. 
By the construction,  
$$\pi_{m} \colon Q_{m} \to S, \quad 
\fF_{m}\cneq \fF_{\mathbb{V}(\uU)}|_{Q_{m}}
 \in \Coh(X\times Q_{m}), \quad u_m \cneq u|_{Q_m},$$
satisfy the desired property. 
 \end{proof}
 \begin{rmk}\label{desired}\emph{
 By the construction
 and Lemma~\ref{lem:C}, the object $\eE_s \in L_n(X, \beta)$ 
 is $\sigma$-limit stable if and only if 
 $$s\notin \pi_{e}(Q_e) \cup \pi_{m}(Q_m).$$
 }
 \end{rmk}
Here we collect some well-known lemmas on 
  a family of objects in $D^b(X)$.
  For the lack of reference, we also put the proofs.  
   Let $R$ be a discrete 
 valuation ring and $K$ a quotient field of $R$. 
 We denote by $t\in R$ the uniformizing parameter, 
 and 
 $o \in \Spec R$ the closed point. 
 We set $X_R=X\times \Spec R$, $X_K=X\times \Spec K$. 

 \begin{lem}\label{extension}
 Take $\fF, \eE \in D^b(X\times \Spec R)$ and a 
 non-zero morphism $f\colon \fF_K \to \eE_K$ in 
 $D^b(X_K)$. Then there is $m\in \mathbb{Z}$ such that 
 \begin{itemize}
 \item The morphism $t^m f \colon \fF_K \to \eE_K$ extends to 
 a morphism $t^m f\colon \fF \to \eE$. 
 \item The induced morphism
  $t^m f|_{X\times \{o\}} \colon \fF_{o} \to \eE_{o}$
 is non-zero.
 \end{itemize}
 \end{lem}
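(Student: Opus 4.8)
\textbf{Proof proposal for Lemma~\ref{extension}.}

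The plan is to reduce the statement to a concrete statement about $\Hom$-groups between $\fF$ and $\eE$ over $\Spec R$ and their base changes, and then exploit that $R$ is a DVR so that "up to a power of $t$" one can clear denominators. First I would observe that, since $\eE$ is a perfect (hence bounded) complex on $X_R$ and $R$ is noetherian, the complex $\dR\HOM_{X_R}(\fF,\eE)$ pushes forward to a bounded complex of finitely generated $R$-modules on $\Spec R$; call a representative $C^{\bullet}$. By flat base change, $\Hom_{D^b(X_K)}(\fF_K,\eE_K)=H^0(C^{\bullet}\otimes_R K)=H^0(C^{\bullet})\otimes_R K$, since localization is exact. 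Therefore the given morphism $f$ corresponds to a class in $H^0(C^{\bullet})\otimes_R K$, and because $H^0(C^{\bullet})$ is a finitely generated $R$-module, there is an integer $m_0\ge 0$ such that $t^{m_0}f$ lies in the image of $H^0(C^{\bullet})\to H^0(C^{\bullet})\otimes_R K$. This image class gives a morphism $\fF\to\eE$ in $D^b(X_R)$ restricting to $t^{m_0}f$ over the generic point, which establishes the first bullet with $m=m_0$.

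Next I would arrange the second bullet. Having fixed \emph{some} extension $g\colon\fF\to\eE$ of $t^{m_0}f$, consider its restriction $g_o\colon\fF_o\to\eE_o$ to the closed fiber, i.e.\ the image of the class of $g$ under $H^0(C^{\bullet})\to H^0(C^{\bullet})\otimes_R R/tR$; more precisely one must be careful that restriction to $X\times\{o\}$ is derived base change, so the relevant map is $H^0(C^{\bullet})\otimes_R k\to \Hom_{D^b(X)}(\fF_o,\eE_o)$, which need not be injective, but it suffices to show $g_o\ne 0$ after possibly dividing $g$ by a further power of $t$. The point is this: if $g_o=0$, then $g$ factors (in the derived category over $R$, after a standard argument using the triangle $\eE\xrightarrow{t}\eE\to\eE_o$) through multiplication by $t$, i.e.\ $g=t\cdot g'$ for some $g'\colon\fF\to\eE$, and then $g'$ extends $t^{m_0-1}f$. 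Since $f\ne 0$, the original class $t^{m_0}f\in H^0(C^{\bullet})\otimes_R K$ is nonzero, so this division process cannot continue indefinitely: there is a largest $m\le m_0$ for which $t^m f$ still extends to some $g\colon\fF\to\eE$, and for this $m$ the restriction $g_o$ must be nonzero by the contrapositive of the factorization argument. Taking this $m$ gives both bullets simultaneously.

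The one place that needs genuine care — and which I expect to be the main obstacle — is the factorization step "if $g_o=0$ then $g=t\cdot g'$". It relies on applying $\Hom_{D^b(X_R)}(\fF,-)$ to the triangle $\eE\xrightarrow{t}\eE\to\eE\dotimes_R k\xrightarrow{+1}$, which yields an exact sequence
\begin{align*}
\Hom_{D^b(X_R)}(\fF,\eE)\xrightarrow{t}\Hom_{D^b(X_R)}(\fF,\eE)\xrightarrow{\rho}\Hom_{D^b(X_R)}(\fF,\eE\dotimes_R k),
\end{align*}
so $g$ lies in the image of multiplication by $t$ as soon as $\rho(g)=0$; the subtlety is that $\rho(g)$ is the image of $g$ in the \emph{derived} restriction, and one must know that $g_o=0$ in $\Hom_{D^b(X)}(\fF_o,\eE_o)$ — where $\fF_o:=\fF\dotimes_R k$, $\eE_o:=\eE\dotimes_R k$ — implies $\rho(g)=0$. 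But these are literally the same group, since $\eE\dotimes_R k\simeq\eE_o$ in $D^b(X)$ (viewing $X\cong X\times\{o\}$), so $\rho(g)=g_o$ and the implication is immediate. Everything else is formal: boundedness of $C^{\bullet}$ uses perfectness of $\eE$ (or, in the paper's language, that $\eE$ is relatively perfect over $\Spec R$), flat base change along $R\to K$ is exact, and finite generation of $H^0(C^{\bullet})$ over the noetherian ring $R$ gives the uniform bound $m_0$.
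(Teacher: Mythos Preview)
Your approach is essentially identical to the paper's: use finite generation of $\Hom_{X_R}(\fF,\eE)$ together with $\Hom_{X_R}(\fF,\eE)\otimes_R K\cong\Hom_{X_K}(\fF_K,\eE_K)$ to clear denominators, then apply $\Hom(\fF,-)$ to the triangle $\eE\xrightarrow{t}\eE\to\eE_o$ so that vanishing at the closed fiber forces a factorization through $t$, contradicting minimality of $m$. The one slip is your phrase ``largest $m\le m_0$ for which $t^m f$ still extends'': since $t^{m_0}f$ already extends, the largest such $m\le m_0$ is $m_0$ itself; you mean the \emph{smallest} such $m$, exactly as the paper chooses $m$ with $t^m f$ extending and $t^{m-1}f$ not.
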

 \begin{proof}
 Since $\Hom(\fF, \eE)$ is a finitely generated 
 $R$-module and 
 $$\Hom_{X_R}(\fF, \eE)\otimes _R K \cong \Hom_{X_K}(\fF_K, \eE_K), $$
  there is $m\in \mathbb{Z}$ such that $t^m f$ 
 extends to $\fF \to \eE$ and $t^{m-1}f$ does not extend 
 to $\fF \to \gG$. We have the exact sequence in $\Coh(X_R)$, 
 $$0 \lr \eE \stackrel{\times t}{\lr} \eE \lr \eE_{o} \lr 0.$$
 The above sequence shows that if $t^m f|_{X\times \{o\}}$ is zero, then
 $t^{m}f$ factors though $\fF \to \eE \stackrel{\times t}{\to}\eE,$
 which gives an extension of $t^{m-1}f$. Therefore 
 $t^m f|_{X\times \{o\}}$ is non-zero.  
 \end{proof}
 Let $T$ be a (not necessary projective) smooth curve
 with a closed point $o \in T$. We set $T^{\circ}=T\setminus \{o\}$. 
 
 \begin{lem}\label{extension2}
 Suppose that $\fF \in \Coh(X\times T^{\circ})$ is flat 
 over $T^{\circ}$ and $\fF_t \in \Coh_{\le 1}(X)$ for any 
 $t\in T^{\circ}$. 
 
 (i) Assume moreover that $\fF_t$ is 
 $(B, \omega)$-twisted semistable for any $t\in T^{\circ}$.  
  Then there is $\widetilde{\fF} \in \Coh(X\times T)$, 
  which is flat over $T$, such that
  $\widetilde{\fF}|_{X\times T^{\circ}}\cong \fF$ and
  $\fF|_{X\times \{o\}}$ is also $(B, \omega)$-twisted semistable.  
  
  (ii) 
 There is an open subset $T^{'\circ}
 \subset T^{\circ}$
and a filtration of flat sheaves over $T^{'\circ}$, 
$$0=\fF_{T^{' \circ}, 0}\subset \fF_{T^{' \circ}, 1} \subset \cdots \subset 
\fF_{T^{' \circ}, n}=\fF_{T^{' \circ}}, $$
such that for any $t\in T^{' \circ}$, the induced filtration of 
$\fF_{t}$ 
is a Harder-Narasimhan filtration with respect to
$(B, \omega)$-twisted semistability.
 \end{lem}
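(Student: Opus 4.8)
Both statements are instances of the usual ``spreading out'' and Langton‑type arguments for one–dimensional twisted semistable sheaves, and the plan is to reduce them to the machinery available in~\cite{Hu}, adapted to the $(B,\omega)$‑twisted notion via~\cite{ToBPS}.

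\textbf{Part (i).} The plan is to reduce to a discrete valuation ring and apply Langton's elementary‑modification argument. Set $R=\oO_{T,o}$, a discrete valuation ring with fraction field $K=\mathbb{C}(T)$, and write $X_R=X\times\Spec R$, $X_K=X\times\Spec K$ as in Lemma~\ref{extension}. Restricting $\fF$ over the punctured spectrum gives $\fF_K\in\Coh(X_K)$, flat over $K$, with $(B,\omega)$‑twisted semistable geometric fibres. First I would take any coherent sheaf on $X_R$ restricting to $\fF_K$ and divide by its $t$‑torsion; since $t$ is invertible on $X_K$ this produces $\gG\in\Coh(X_R)$, flat over $R$, with $\gG_K\cong\fF_K$, and since the Hilbert polynomial is constant in an $R$‑flat family, $\gG_o\in\Coh_{\le 1}(X)$. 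If $\gG_o$ fails to be $(B,\omega)$‑twisted semistable, I would run the Langton argument (\cite[Theorem~2.B.1]{Hu}): using the maximal $(B,\omega)$‑twisted destabilizing quotient of the central fibre, perform an elementary modification of $\gG$ along $X\times\{o\}$; the $\mu_{\sigma}$‑slope of the destabilizing quotient of the modified central fibre strictly decreases and is bounded below, so after finitely many steps one reaches $\widetilde{\gG}$, flat over $R$, with $\widetilde{\gG}_K\cong\fF_K$ and $\widetilde{\gG}_o$ $(B,\omega)$‑twisted semistable. All modifications are supported on $X\times\{o\}$, so $\widetilde{\gG}$ restricted to the punctured spectrum is still identified with $\fF$. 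Finally $\widetilde{\gG}$ spreads out to a flat sheaf on $X\times U$ for some open $o\in U\subseteq T$, isomorphic to $\fF$ over $X\times(U\setminus\{o\})$ after shrinking $U$; gluing with $\fF$ on $X\times T^{\circ}$ produces the desired $\widetilde{\fF}$. The only thing to verify is that the termination argument of \cite[Theorem~2.B.1]{Hu} applies verbatim to purely one‑dimensional $(B,\omega)$‑twisted semistable sheaves on the threefold $X$, which it does.

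\textbf{Part (ii).} Here the plan is to spread out the Harder--Narasimhan filtration of the generic fibre; alternatively one may invoke the relative Harder--Narasimhan filtration theorem (\cite[Theorem~2.3.2]{Hu}) adapted to $(B,\omega)$‑twisted semistability. Let $\eta$ be the generic point of $T^{\circ}$, with residue field $K=\mathbb{C}(T)$, and $\fF_\eta\in\Coh(X_K)$ the corresponding sheaf; take its Harder--Narasimhan filtration $0=\fF_{\eta,0}\subset\fF_{\eta,1}\subset\cdots\subset\fF_{\eta,n}=\fF_\eta$ for $(B,\omega)$‑twisted semistability, which exists since $\fF_\eta$ is one‑dimensional over a field. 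Each $\fF_{\eta,i}$ is the $\eta$‑fibre of a coherent subsheaf $\fF_{T^{'\circ},i}\subseteq\fF_{T^{'\circ}}$ for a suitable dense open $T^{'\circ}\subseteq T^{\circ}$, and by generic flatness I may shrink $T^{'\circ}$ so that every $\fF_{T^{'\circ},i}$ and every successive quotient $\fF_{T^{'\circ},i}/\fF_{T^{'\circ},i-1}$ is flat over $T^{'\circ}$. For $t\in T^{'\circ}$ the fibre $(\fF_{T^{'\circ},i}/\fF_{T^{'\circ},i-1})_t$ has the numerical class of $\gr_i\fF_\eta$; in particular it is nonzero and its $\mu_{\sigma}$‑slope equals that of $\gr_i\fF_\eta$, so the slopes remain strictly decreasing in $i$. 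Finally, since $(B,\omega)$‑twisted semistability is an open condition in a flat family of one‑dimensional sheaves (\cite[Section~2.3]{Hu}, in twisted form via~\cite{ToBPS}) and holds at $\eta$, it holds over a further dense open, over which I shrink $T^{'\circ}$ once more. On the resulting $T^{'\circ}$ the induced filtration of $\fF_t$ has nonzero $(B,\omega)$‑twisted semistable subquotients with strictly decreasing $\mu_{\sigma}$‑slopes for every $t$, hence is the Harder--Narasimhan filtration by its uniqueness.

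\textbf{Main obstacle.} The substantive point is the central fibre in part (i): an arbitrary flat extension need not have semistable special fibre, and repairing this requires the Langton‑type elementary modification together with its monovariant termination argument, checked for purely one‑dimensional twisted semistable sheaves. Part (ii), together with the spreading‑out and gluing bookkeeping in part (i), are routine applications of generic flatness and of the openness of semistability in flat families.
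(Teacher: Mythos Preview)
Your argument for part (ii) is essentially the paper's: the paper simply cites \cite[Theorem~2.3.2]{Hu} for $B=0$ and says the twisted case is analogous, and you have written out exactly that spreading-out of the generic Harder--Narasimhan filtration.

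For part (i) you take a genuinely different route. The paper does \emph{not} run a Langton argument. Instead it views $(B,\omega)$-twisted semistability as the stability induced by $(Z_\sigma,\Coh_{\le 1}(X))$, uses \cite[Proposition~2.8]{Tst3} to reduce to rational $B,\omega$, then acts by an element of $\mathbb{C}$ on $\Stab(D^b(\Coh_{\le 1}(X)))$ so that the resulting heart $\aA'$ is noetherian and $Z'(\fF_s)\in\mathbb{R}_{<0}$; at that point the valuative extension theorem of Abramovich--Polishchuk \cite[Theorem~4.1.1]{AP} applies directly. Your approach is more elementary and self-contained (no detour through Bridgeland's space of stability conditions), while the paper's approach avoids reproving Langton by quoting an off-the-shelf extension result for t-structures.

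One caution on your sketch: the sentence ``the $\mu_\sigma$-slope of the destabilizing quotient \dots\ strictly decreases and is bounded below, so after finitely many steps\dots'' is not an accurate summary of Langton's termination. In \cite[Theorem~2.B.1]{Hu} one tracks the maximal destabilizing \emph{subsheaf} $B_n$ of the central fibre, its slope is only \emph{non-increasing}, and termination when the slopes stabilize comes from the auxiliary exact sequences relating $B_{n+1}$ and $E^n_k/B_n$ together with a boundedness/noetherian argument. For one-dimensional sheaves the possible $\ch_2$-classes lie in the finite set $\nN(\beta)$ and $\ch_3\in\mathbb{Z}$, so the slopes do lie in a discrete set and the adaptation does go through; but you should state the termination mechanism correctly rather than as ``strictly decreasing and bounded below''.
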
 
 \begin{proof}
  (i) 
 For $\sigma=B+i\omega$, let us consider the 
 induced stability condition $(Z_{\sigma}, \Coh_{\le 1}(X))$. 
 Using~\cite[Proposition 2.8]{Tst3}, we can assume that $B$ and $\omega$
 are defined over $\mathbb{Q}$. 
After applying some element of $\mathbb{C}$
to $\Stab(D^b(\Coh_{\le 1}(X)))$, we obtain 
a stability condition $(Z', \aA')$ on $D^b(\Coh_{\le 1}(X))$
such that $\aA'$ is a noetherian abelian category 
and $Z'(\fF_s) \in \mathbb{R}_{<0}$ for any $s\in U$. 
(See~\cite[Remark 2.7]{Tst3}.)
Then we can apply~\cite[Theorem 4.1.1]{AP}
and conclude the result. 

(ii) If $B=0$, this is shown in~\cite[Theorem 2.3.2]{Hu}.
The twisted case is similarly discussed and we leave it to the reader. 
 \end{proof}
 
 Now we are ready to show the following theorem. 
 
 \begin{thm}\label{ready}
 The embedding $\lL_n^{\sigma}(X, \beta) \subset \mM_{0}^{\rm{et}}$
 is an open immersion, and $\lL_n^{\sigma}(X, \beta)$
 is a separated algebraic space of finite type over $\mathbb{C}$. 
 \end{thm}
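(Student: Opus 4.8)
The plan is to verify the two claims of Theorem~\ref{ready} separately: openness of the embedding, and then separatedness together with finite type.

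\textbf{Openness and algebraicity.}  First I would show that $\lL_n^{\sigma}(X, \beta)\subset \mM_0^{\rm{et}}$ is an open immersion.  Since $\mM^{\rm{et}}$ is an algebraic space of locally finite type by Inaba~\cite{Inaba}, and $\mM_0^{\rm{et}}$ is the closed fibre over $[\oO_X]$ under the determinant morphism, $\mM_0^{\rm{et}}$ is again an algebraic space of locally finite type.  To see that $\lL_n^{\sigma}(X, \beta)$ is an open subspace, it suffices to check that for any scheme $S$ and any $\eE\in \mM_0^{\rm{et}}(S)$, the locus of $s\in S$ with $\eE_s\in L_n^{\sigma}(X, \beta)$ is open in $S$.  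The numerical condition (\ref{chern}) is locally constant, so it cuts out an open-and-closed subset; on that subset the condition $\eE_s\in \aA_{1/2}^p$ is open by Lemma~\ref{lem:open}, hence after shrinking we may assume $\eE_s\in L_n(X, \beta)$ for all $s$.  Then I would invoke Proposition~\ref{prop:construct}: the finite-type $S$-schemes $Q_e\to S$ and $Q_m\to S$ have images $\pi_e(Q_e)$ and $\pi_m(Q_m)$ that are constructible, and by Remark~\ref{desired} the $\sigma$-limit stable locus is exactly the complement $S\setminus(\pi_e(Q_e)\cup\pi_m(Q_m))$.  To conclude openness of this complement I must show the two images are \emph{closed}; for this I would use the valuative criterion via Lemma~\ref{extension} and Lemma~\ref{extension2}: given a destabilizing strict epimorphism or monomorphism over the generic point of a discrete valuation ring, these lemmas produce (after clearing denominators and, for the sub-object case, taking a limit of the relevant semistable sheaf) a destabilizing sub/quotient over the closed point, showing the destabilized locus is stable under specialization.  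Combined with constructibility this gives closedness of $\pi_e(Q_e)\cup\pi_m(Q_m)$, hence openness of $\lL_n^{\sigma}(X, \beta)$ in $\mM_0^{\rm{et}}$; being open in an algebraic space of locally finite type, it is itself an algebraic space, and by Proposition~\ref{prop:bounded} (boundedness of $L_n^{\sigma}(X, \beta)$) it is of finite type.

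\textbf{Separatedness.}  For separatedness I would again use the valuative criterion of separatedness over a discrete valuation ring $R$ with fraction field $K$ and closed point $o$.  Suppose $\eE, \eE'\in \lL_n^{\sigma}(X, \beta)(\Spec R)$ agree over $\Spec K$, i.e.\ there is an isomorphism $f\colon \eE_K\simto \eE'_K$; I must show $\eE\cong \eE'$ over $\Spec R$ (compatibly).  Using Lemma~\ref{extension}, after multiplying by a power $t^m$ of the uniformizer I may extend $f$ to a morphism $f\colon \eE\to \eE'$ over $\Spec R$ whose restriction $f_o\colon \eE_o\to \eE'_o$ is non-zero.  Both $\eE_o$ and $\eE'_o$ lie in $L_n^{\sigma}(X, \beta)$, hence are $\sigma$-limit stable objects of $\aA_{1/2}^p$ with the same phase (equal to $1/2$, by Lemma~\ref{infty}) and the same numerical class.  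A non-zero morphism between $\sigma$-limit stable objects of equal phase is an isomorphism — this is the standard argument: its image in the abelian category $\aA_{1/2}^p$ (or rather the strict image, using the finite-length property of Lemma~\ref{finlen}) would otherwise contradict strict stability of source or target.  Therefore $f_o$ is an isomorphism, and by Nakayama / the local criterion for isomorphism of perfect complexes, $f$ itself is an isomorphism over $\Spec R$; symmetrically one extends $f^{-1}$ and checks compatibility, so the two $R$-points coincide.  This establishes the valuative criterion of separatedness.

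\textbf{Main obstacle.}  The routine parts are the numerical-class and $\aA_{1/2}^p$-openness reductions, and the standard "non-zero map between stable objects of equal phase is iso" step.  The genuinely delicate point is proving that $\pi_e(Q_e)$ and $\pi_m(Q_m)$ are \emph{closed}, equivalently that being destabilized is closed under specialization.  For the quotient side this is a boundedness-plus-properness argument: one needs to take a limit of a destabilizing quotient, and the quotient may fail to stay in $\Coh_{\le 1}(X)\cap\aA_{1/2}^p$ (purity can degenerate).  For the sub-object side the destabilizing sub-object $F\hookrightarrow \eE_o$ must be produced as a flat limit, which is where Lemma~\ref{extension2}(i) — extension of a family of twisted-semistable sheaves using~\cite{AP} — and the Harder–Narasimhan limiting statement Lemma~\ref{extension2}(ii) do the real work; threading these limits through the strict-monomorphism condition (so that the limit sub-object still has cokernel in $\aA_{1/2}^p$) is the technical heart, and is handled exactly as in~\cite[Proposition 3.17, and the properness arguments thereafter]{Tst3}.
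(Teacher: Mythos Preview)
Your separatedness argument is essentially identical to the paper's.  The finite-type conclusion via Proposition~\ref{prop:bounded} is also the same.  The real divergence is in the openness step, and there your approach has a genuine gap.

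You propose to show that $\pi_e(Q_e)\cup\pi_m(Q_m)$ is \emph{closed} by proving that being destabilized is stable under specialization: given a destabilizing strict monomorphism over the generic point of a DVR, you want to produce a destabilizing strict monomorphism over the closed point.  The problem is that after extending the family of semistable subsheaves $\fF_{m,T^{\circ}}$ to $\widetilde{\fF}_m$ (Lemma~\ref{extension2}) and extending the morphism (Lemma~\ref{extension}), you only get a \emph{non-zero morphism} $\widetilde{\fF}_{m,o}\to \eE_o$.  There is no reason this should be injective in $\aA^p$, and even if it were, its cokernel need not lie in $\aA_{1/2}^p$.  Your claim that this is ``handled exactly as in~\cite[Proposition~3.17]{Tst3}'' is precisely what fails here: the paper's own remark immediately following the proof points out that the argument of~\cite{Tst3} relies on~\cite[Proposition~3.5.3]{AP}, which requires the heart to be noetherian, and $\aA^p$ is not noetherian (Remark~\ref{rmk:noe}).

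The paper sidesteps this entirely.  Rather than showing the destabilized locus is closed, it argues by contradiction: assume the fixed point $s$ is $\sigma$-limit \emph{stable} yet lies in $\overline{\pi_m(Q_m)}$.  One then obtains a non-zero morphism $\widetilde{\fF}_{m,o}\to \eE_s$ with $\widetilde{\fF}_{m,o}$ a $(B,\omega)$-twisted semistable sheaf (hence $\sigma$-limit semistable) satisfying $\phi_\sigma(\widetilde{\fF}_{m,o})\succeq\phi_\sigma(\eE_s)$.  Now the \emph{stability of $\eE_s$} forces the phases to agree and the morphism to be surjective in $\aA^p$, so $\eE_s$ is a Jordan--H\"older factor of an object of $\Coh_{\le 1}(X)$; by Remark~\ref{rmk:art} this forces $\eE_s\in\Coh_{\le 1}(X)$, contradicting $\ch_0(\eE_s)=-1$.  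The point is that no control on the cokernel is ever needed---only a non-zero morphism with the right phase inequality, which is exactly what Lemmas~\ref{extension} and~\ref{extension2} supply.
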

 \begin{proof}
 First we show that $\lL_n^{\sigma}(X, \beta) \subset \mM_{0}^{\rm{et}}$
 is an open immersion. 
 For a variety $S$, let us take a $S$-valued point,
 $\eE \in \mM_{0}^{\rm{et}}(S)$. 
 Suppose that $\eE_s \in L_n^{\sigma}(X, \beta)$ for some
 point $s\in S$. 
We want to show that there exists a Zariski open subset $s\in U \subset S$
such that $\eE_{s'} \in L_n^{\sigma}(X, \beta)$
for any $s' \in U$. 
Applying Lemma~\ref{lem:open}, 
we may assume $\eE_{s'} \in L_n(X, \beta)$ for any 
$s' \in S$. Let us construct
\begin{align*}
& \pi_e \colon Q_e \to S, \quad \gG_e \in \Coh(X\times Q_e), \quad 
u_e \colon \eE_{Q_e} \to \gG_e, \\
& \pi_m \colon Q_m \to S, \quad \fF_m \in \Coh(X\times Q_m), \quad 
u_m \colon \fF_m \to \eE_{Q_m}, 
\end{align*}
as in Proposition~\ref{prop:construct}. 
Noting Remark~\ref{desired}, 
it is enough to show 
$$s \notin \overline{\pi_{e}(Q_{e})}\cup 
\overline{\pi_{m}(Q_{m})}.$$
Let us show $s\notin \overline{\pi_{m}(Q_{m})}$. 
The proof of $s\notin \overline{\pi_{e}(Q_{e})}$
is similar. Suppose by a contradiction that 
$s\in \overline{\pi_{m}(Q_{m})}$. 
Then we can find a smooth curve $T$ with 
a closed point $o\in T$ and
a morphism 
$p\colon T \to S$ such that $p(o)=s$ and there is
a commutative diagram, 
$$\xymatrix{
T^{\circ} \ar[r]\ar[d] & Q_{m} \ar[d]^{\pi_{m}} \\
T \ar[r]^{p} & S, }$$
where $T^{\circ}=T\setminus \{o\}$.
By pulling back $\fF_{m} \in \Coh(X\times Q_{m})$
to $X\times T^{\circ}$, 
we obtain the 
object $\fF_{{m}, T^{\circ}} \in \Coh(X\times T^{\circ})$ and a 
morphism,
$$u_{{m}, T^{\circ}}\colon \fF_{{m},
 T^{\circ}} \lr \eE_{T^{\circ}}, $$
such that $\phi_{\sigma}(\fF_{{m}, t}) \succeq 
\phi_{\sigma}(\eE_{t})$ for any $t \in T^{\circ}$. 
Applying Lemma~\ref{extension2} (ii), we may assume that 
$\fF_{m, t}$ is $(B, \omega)$-twisted semistable 
for any $t\in T^{\circ}$. Then Lemma~\ref{extension2} (i)
shows that there is a flat family of $(B, \omega)$-twisted 
semistable sheaves,
$$\widetilde{\fF}_{m} \in \Coh(X \times T), $$
which extends $\fF_{{m}, T^{\circ}}$. 
Applying Lemma~\ref{extension} (i) for 
$R=\oO_{T, o}$, we obtain a non-zero morphism, 
\begin{align}\label{nonzero}
\widetilde{\fF}_{m, o} \lr \eE_s.\end{align}
Note that $\widetilde{\fF}_{m, o}$
is $\sigma$-limit semistable, $\eE_s$ is $\sigma$-limit stable, 
and $\phi_{\sigma}(\widetilde{\fF}_{m, o})\succeq \phi_{\sigma}(\eE_s)$. 
This implies that $\phi_{\sigma}(\widetilde{\fF}_{m, o})
= \phi_{\sigma}(\eE_s)$, the object $\eE_s$ is one of the 
Jordan-H\"older factors of $\widetilde{\fF}_{m, o}$,
 and the morphism (\ref{nonzero})
is surjective in $\aA^p$. 
However in this case
$\eE_s$ must be an object in $\Coh_{\le 1}(X)$
as we remarked in 
Remark~\ref{rmk:art},  
which contradicts that $\eE_s \in L_n(X, \beta)$. 
Therefore $s \notin \overline{\pi_{e}(Q_{e})}$
holds. 

Now we have proved $\lL_n^{\sigma}(X, \beta) \subset 
\mM_{0}^{\rm{et}}$ is an open immersion, hence 
$\lL_n^{\sigma}(X, \beta)$ is an algebraic space of 
locally finite type. Moreover $L_n^{\sigma}(X, \beta)$ is 
bounded by Lemma~\ref{prop:bounded}, 
which implies that $\lL_n^{\sigma}(X, \beta)$ is in 
fact of finite type. 

Finally let us show that $\lL_n^{\sigma}(X, \beta)$ 
is separated
using valuative criterion.
 Let $R$, 
 $K$, $t\in R$ and $o\in \Spec R$ be as in Lemma~\ref{extension}. 
 Take two $R$-valued points of 
$\lL_n^{\sigma}(X, \beta)$, 
$\eE_1, \eE_2 \in D^b(X_R)$. 
 Suppose that there is an isomorphism in $D^b(X_K)$, 
$$f \colon \eE_{1, K} \stackrel{\cong}{\lr} \eE_{2, K}.$$
quotient field of $R$. By the valuative criterion, it is enough 
to show that $t^m f$ extends to an isomorphism $\eE_{1} \to \eE_2$
for some $m\in \mathbb{Z}$. 
By Lemma~\ref{extension}, 
there is $m\in \mathbb{Z}$ and a morphism 
$$\widetilde{f} \colon \eE_1 \to \eE_2$$ which 
extends $t^m f$ and the induced morphism $\widetilde{f}_o
\colon \eE_{1, o}\to \eE_{2,o}$
is non-zero. Since $\eE_{1,o}$ and $\eE_{2,o}$ are 
both $\sigma$-limit stable objects with the same numerical classes, 
the morphism $\widetilde{f}_o$ is an isomorphism. Hence $\widetilde{f}$
 is also an 
isomorphism.  
 \end{proof}
\begin{rmk}\emph{
In~\cite{Tst3}, the author used the result of~\cite[Proposition 3.5.3]{AP}
to show the openness of stability for the case of K3 surfaces. 
In the situation of our paper, the relevant abelian category $\aA^p$
is not noetherian which prevents us to use the result of~\cite{AP}. 
Instead we have used Lemma~\ref{extension}, Lemma~\ref{extension2} to show 
the openness.} 
\end{rmk}
\begin{rmk}\emph{
It seems likely that $\lL_n^{\sigma}(X, \beta)$ is a 
projective variety 
 for a generic choice of $\sigma$, which 
we are unable to prove at this time.
We do not how to construct the moduli space as a GIT quotient. 
Also the main technical 
difficulty to show the properness 
is that we are unable to use extension results of a family 
of objects
as in~\cite[Theorem 4.1.1]{AP}, since $\aA^p$ is not 
noetherian again.}
\end{rmk}

\section{Counting invariants of limit stable objects}\label{section:count}
In this section, we again assume that $X$ is a projective 
Calabi-Yau 3-fold. The purpose of this section is 
to construct virtual counting of $\sigma$-limit 
stable objects, and study their properties. 
\subsection{Definitions of counting invariants}
For $\sigma \in A(X)_{\mathbb{C}}$, $\beta \in H^4(X, \mathbb{Z})$
and $n\in \mathbb{Z}$, let 
$\lL_n^{\sigma}(X, \beta)$
be the algebraic space constructed in Theorem~\ref{ready}.
In this paragraph, we give the definition of
the counting invariant of $\sigma$-limit stable objects 
$L_{n, \beta}(\sigma) \in \mathbb{Z}$ using $\lL_n^{\sigma}(X, \beta)$. 
Since we are unable to conclude that $\lL_n^{\sigma}(X, \beta)$ 
is proper, the integration of virtual classes does not 
make sense. Instead we use K.~Behrend's constructible function~\cite{Beh}
to define counting invariants. 
Recall that Behrend~\cite{Beh} constructs on any scheme $M$,
(more generally $M$ is a Deligne Mumford stack,) 
a canonical constructible function, 
$$\nu_{M}\colon M \lr \mathbb{Z},$$
which depends only on the scheme structure of $M$. 
If $M$ is smooth, $\nu_M$ is given by $\nu_M(p)=(-1)^{\dim M}$. 
Moreover if $M$ is proper and carries a symmetric perfect
obstruction theory, 
one has 
$$\sharp^{\rm{vir}}(M)=\sum_{n\in \mathbb{Z}}ne(\nu_{M}^{-1}(n)), $$
where $\sharp^{\rm{vir}}(M)$ is the integration over the 
virtual cycle, and
$e(\ast)$ is the euler number. 
In our situation, let 
$$\nu_{L}\colon \lL_{n}^{\sigma}(X, \beta) \lr \mathbb{Z},$$
be Behrend's constructible function. 
\begin{defi}\emph{
We define the invariant $L_{n, \beta}(\sigma)\in \mathbb{Z}$
by the formula,} 
$$L_{n, \beta}(\sigma)=\sum_{n\in \mathbb{Z}}ne(\nu_{L}^{-1}(n)).$$
\end{defi}
Note that since an algebraic space of finite type is stratified by 
affine schemes of finite type, its euler number makes sense. 
\begin{rmk}\emph{
Suppose that $\lL_{n}^{\sigma}(X, \beta)$ is a proper algebraic space. 
Then by the same argument as in~\cite[Lemma 2.10]{PT} and \cite{HuTho},
there is a virtual fundamental class, 
$$[\lL_{n}^{\sigma}(X, \beta)]^{\rm{vir}} \in A_0(\lL_n^{\sigma}(X, \beta)).$$
By the above argument, 
 our invariant $L_{n, \beta}(\sigma)$ coincides with the 
integration over the virtual class, 
$$L_{n, \beta}(\sigma)=
\int_{[\lL_{n}^{\sigma}(X, \beta)]^{\rm{vir}}}1.$$}
\end{rmk}
The purpose of this section is to relate the 
invariants $L_{n, \beta}(\sigma)$ to the 
invariants of stable pairs on a Calabi-Yau 3-fold introduced by 
Pandharipande and Thomas~\cite{PT}. 
Let us recall the notion of stable pairs. 
\begin{defi}\emph{\bf{\cite{PT}}}\emph{
A stable pair
consists of data $(F, s)$, 
$$s\colon \oO_X \lr F, $$
where $F$ is a pure one dimensional sheaf and $s$
is a morphism satisfying 
$$\dim \Cok(s)=0.$$}
\end{defi}
Given a stable pair $(F, s)$, we can associate the 
two term complex, 
\begin{align}\label{term}
I^{\bullet}=(\oO_X \stackrel{s}{\lr} F) \in D^b(X),\end{align}
where $F$ is located in degree zero. As we mentioned
in Remark~\ref{quasi-iso}, if $F$ satisfies 
\begin{align}\label{satisfy}
\ch_2(F)=\beta, \quad \ch_3(F)=n,
\end{align}
we have $I^{\bullet} \in L_n(X, \beta)$. 
By abuse of notation, we also call the two term complexes
(\ref{term}) as stable pairs. 
In~\cite{PT}, the moduli space of stable pairs $(F, s)$
satisfying the condition (\ref{satisfy}) is constructed
as a projective variety, 
and denoted by $P_n(X, \beta)$. The obstruction theory 
on $P_n(X, \beta)$ is obtained from the deformation theory of the 
 two term complexes 
$I^{\bullet}=(\oO_X \stackrel{s}{\to}F)$. 
\begin{defi}\emph{\bf{\cite{PT}}}\label{PTinv}\emph{
A PT-invariant $P_{n, \beta} \in \mathbb{Z}$ is defined by 
$$P_{n, \beta}=\int_{[P_n(X, \beta)]^{\rm{vir}}}1
= \sum_{n\in \mathbb{Z}}ne(\nu_{P}^{-1}(n))\in \mathbb{Z}.$$
Here $\nu_{P}\colon P_n(X, \beta)\to \mathbb{Z}$ is
Behrend's constructible function.}
\end{defi}

\subsection{Limit stable objects and stable pairs}
The purpose of this paragraph is 
to investigate the relationship between 
$L_{n, \beta}(\sigma)$ and $P_{n, \beta}$. 
First we show the following lemma. 
\begin{lem}\label{stabpair}
An object $E\in L_n(X, \beta)$ 
(cf.~(\ref{L1}))
is isomorphic to 
a stable pair (\ref{term}) if and only if $\hH^0(E)$ is zero 
dimensional.
\end{lem}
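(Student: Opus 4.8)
The plan is to establish the two implications separately; the forward implication is essentially immediate, while the converse requires producing a stable pair out of $E$. First I would dispose of the easy direction: if $E$ is isomorphic to a stable pair $I^{\bullet}=(\oO_X\stackrel{s}{\to}F)$ with $F$ pure one dimensional and $\dim\Cok(s)=0$, then the defining triangle $\oO_X\stackrel{s}{\to}F\to E\to\oO_X[1]$ shows $\hH^0(E)\cong\Cok(s)$, which is zero dimensional.

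For the converse, suppose $E\in L_n(X,\beta)$ with $Q\cneq\hH^0(E)$ zero dimensional, and write $\hH^{-1}(E)\cong I_C$ with $\oO_C$ pure one dimensional (or zero) as in Lemma~\ref{lem:C}. The first step is to lift the inclusion $I_C\hookrightarrow\oO_X$ to a morphism $\psi\colon E\to\oO_X[1]$. Applying $\Hom(-,\oO_X[1])$ to the truncation triangle $I_C[1]\to E\to Q\to I_C[2]$, the restriction map $\Hom(E,\oO_X[1])\to\Hom(I_C[1],\oO_X[1])=\Hom(I_C,\oO_X)$ is surjective because the obstruction group $\Ext^2(Q,\oO_X)$ vanishes ($Q$ being zero dimensional on the smooth $3$-fold $X$: $\eE xt^i_X(Q,\oO_X)=0$ for $i\neq 3$, so $\Ext^i(Q,\oO_X)=0$ for $i<3$). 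Lifting the inclusion $I_C\hookrightarrow\oO_X$, we obtain $\psi\colon E\to\oO_X[1]$ with $\hH^{-1}(\psi)=(I_C\hookrightarrow\oO_X)$ injective.

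Next I would set $F\cneq\Cone(\psi)[-1]$, giving a triangle $\oO_X\stackrel{s}{\to}F\to E\stackrel{-\psi}{\to}\oO_X[1]$. The associated long exact sequence of cohomology sheaves, together with the injectivity of $\hH^{-1}(\psi)$, yields $\hH^{-1}(F)=0$ and an exact sequence of sheaves $0\to I_C\to\oO_X\to F\to Q\to 0$; thus $F$ is a genuine sheaf, $\ker(s)=I_C$, $\im(s)\cong\oO_C$, and $\Cok(s)\cong Q$ is zero dimensional. It remains to check that $F$ is pure one dimensional. Since $\oO_C\subset F$ is pure with quotient $Q$, it suffices to show $F$ has no nonzero zero dimensional subsheaf, i.e.\ $\Hom(\oO_x,F)=0$ for every $x\in X$; applying $\Hom(\oO_x,-)$ to the triangle $\oO_X\to F\to E\to\oO_X[1]$ and using $\Hom(\oO_x,\oO_X)=\Ext^1(\oO_x,\oO_X)=0$ gives $\Hom(\oO_x,F)\cong\Hom(\oO_x,E)$, which vanishes because $E\in\aA_{1/2}^p$ (Remark~\ref{rmk:ob}). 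Therefore $(F,s)$ is a stable pair with $\Cone(\oO_X\stackrel{s}{\to}F)\cong E$.

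The main obstacle is the construction of the triangle $\oO_X\to F\to E$: the two non-formal inputs are the vanishing $\Ext^2(Q,\oO_X)=0$ (producing the lift $\psi$, using that $Q$ is zero dimensional) and the purity of $F$ (using the defining property $\Hom(\oO_x,E)=0$ of $\aA_{1/2}^p$); the remainder is a routine long exact sequence chase. One should also note the degenerate case $C=\emptyset$, $Q=0$, where $E\cong\oO_X[1]$ corresponds to the empty stable pair and causes no difficulty. An alternative to building $\psi$ by hand is to observe that the sequence $I_C\to\oO_X\to\oO_C$ together with $\Ext^1(Q,\oO_X)=\Ext^2(Q,\oO_X)=0$ makes the connecting map $\Ext^1(Q,\oO_C)\to\Ext^2(Q,I_C)$ an isomorphism, so the class of $E$ as an extension of $Q$ by $I_C[1]$ comes from a unique extension $0\to\oO_C\to F\to Q\to 0$, and the octahedral axiom then identifies $E$ with $\Cone(\oO_X\to F)$.
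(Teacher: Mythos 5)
Your proof is correct and follows essentially the same route as the paper: lift the inclusion $I_C\subset\oO_X$ to a morphism $E\to\oO_X[1]$ using the vanishing of $\Ext^{j}(\hH^0(E),\oO_X)$ for $j=1,2$ (you via local $\eE xt$'s, the paper via Serre duality), take the cone to produce $F$, and deduce purity of $F$ from $\Hom(\oO_x,E)=0$ for $E\in\aA_{1/2}^p$. The extra details you supply (injectivity of $\hH^{-1}(\psi)$ forcing $\hH^{-1}(F)=0$, and the degenerate case $C=\emptyset$) are points the paper leaves implicit, but the argument is the same.
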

\begin{proof}
Only if part is obvious, so we show the if part. 
For an object $E\in L_n(X, \beta)$, suppose that $\hH^0(E)$
is zero dimensional. 
Applying $\Hom(\ast, \oO_X[1])$ for the triangle
$\hH^{-1}(E)[1] \to E \to \hH^0(E)$, we obtain 
the exact sequence, 
$$\Hom(\hH^0(E), \oO_X[1]) \to \Hom(E, \oO_X[1])
\to \Hom(\hH^{-1}(E), \oO_X) \to \Hom(\hH^0(E), \oO_X[2]).$$
Since $\hH^0(E)$ is zero dimensional, the Serre duality implies
$$\Hom(\hH^0(E), \oO_X[j])=H^{3-j}(X, \hH^0(E))=0,$$
for $j=1, 2$. 
Hence we have the isomorphism 
$$\Hom(E, \oO_X[1]) \stackrel{\cong}{\lr}
 \Hom(\hH^{-1}(E), \oO_X).$$
  By Lemma~\ref{lem:C}, we have $\hH^{-1}(E)=I_C$
 for a one dimensional subscheme $C\subset X$. 
 Therefore there is a morphism $u\colon E\to \oO_X[1]$ corresponding 
 to the inclusion $I_C \subset \oO_X$. Let us take the 
 distinguished triangle, 
 $$ \oO_X \lr F \lr E \stackrel{u}{\lr} \oO_X[1].$$
 It is enough to show that $F$ is a pure one dimensional sheaf. 
 Since $E\in L_n(X, \beta)$, it is obvious that 
 $F \in \Coh_{\le 1}(X)$. For a closed point $x\in X$, we have 
 the exact sequence, 
 $$0=\Hom(\oO_x, \oO_X) \lr \Hom(\oO_x, F) \lr \Hom(\oO_x, E).$$
 Since $E\in \aA_{1/2}^p$, we have $\Hom(\oO_x, E)=0$, 
 hence $\Hom(\oO_x, F)=0$. Therefore $F$ is a pure sheaf. 
 \end{proof}
  In the following, we focus on $\sigma\in A(X)_{\mathbb{C}}$ 
of the form, 
\begin{align}\label{form}
\sigma=k\omega +i\omega, \quad k\in \mathbb{R},\end{align}
and see how $\lL_n^{\sigma}(X, \beta)$ and $L_{n, \beta}(\sigma)$
vary under change of $k\in \mathbb{R}$. 
The advantage of setting $\sigma$ as (\ref{form}) is 
as follows. 
\begin{lem}\label{adv}
For $F\in \Coh_{\le 1}(X)$ and $E\in L_n(X, \beta)$, 
the condition $\phi_{\sigma}(F)\preceq \phi_{\sigma}(E)$ 
(resp. $\phi_{\sigma}(F)\succeq \phi_{\sigma}(E)$)
implies
\begin{align}\label{adv2}
k\le -\frac{\mu_{i\omega}(F)}{2}, \quad 
\left( \mbox{resp. }k\ge -\frac{\mu_{i\omega}(F)}{2}.\right)
\end{align}
\end{lem}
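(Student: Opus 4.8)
The plan is to read the statement off from Lemma~\ref{equ} by specializing to $B=k\omega$. First I would record the two elementary identities produced by this choice: for $F\in\Coh_{\le 1}(X)$ with $\dim\Supp F=1$ (so that $\omega\ch_2(F)>0$) one has
$$\mu_{\sigma}(F)=\frac{\ch_3(F)-k\omega\ch_2(F)}{\omega\ch_2(F)}=\mu_{i\omega}(F)-k,$$
and, since $B\omega^2=k\omega^3$, the threshold occurring in Lemma~\ref{equ} is $-3B\omega^2/\omega^3=-3k$. Hence $\mu_{\sigma}(F)>-3B\omega^2/\omega^3$ rearranges to $\mu_{i\omega}(F)-k>-3k$, i.e. to $k>-\mu_{i\omega}(F)/2$, and likewise $\mu_{\sigma}(F)<-3B\omega^2/\omega^3$ is equivalent to $k<-\mu_{i\omega}(F)/2$.

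Next I would run the contrapositive. Assume $\phi_{\sigma}(F)\preceq\phi_{\sigma}(E)$ and suppose, for contradiction, that $k>-\mu_{i\omega}(F)/2$. By the identity above this means $\mu_{\sigma}(F)>-3B\omega^2/\omega^3$, which is exactly the first bullet in the ``resp.'' part of Lemma~\ref{equ}; therefore $\phi_{\sigma}(F)\succ\phi_{\sigma}(E)$, i.e. $\phi_{\sigma_m}(F)>\phi_{\sigma_m}(E)$ for $m\gg0$, contradicting $\phi_{\sigma_m}(F)\le\phi_{\sigma_m}(E)$ for $m\gg0$. This forces $k\le-\mu_{i\omega}(F)/2$. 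The ``resp.'' assertion follows by the mirror argument: if $\phi_{\sigma}(F)\succeq\phi_{\sigma}(E)$ and $k<-\mu_{i\omega}(F)/2$, then $\mu_{\sigma}(F)<-3B\omega^2/\omega^3$, and the first bullet of the non-``resp.'' case of Lemma~\ref{equ} gives $\phi_{\sigma}(F)\prec\phi_{\sigma}(E)$, again a contradiction. I would also note that the degenerate case $\dim\Supp F=0$ is vacuous, since then $\phi_{\sigma_m}(F)\to1$ while $\phi_{\sigma_m}(E)\to1/2$ by Lemma~\ref{infty}, so $\phi_{\sigma}(F)\preceq\phi_{\sigma}(E)$ never occurs (and $\phi_{\sigma}(F)\succeq\phi_{\sigma}(E)$ always does, consistently with $\mu_{i\omega}(F)=+\infty$).

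The argument is essentially bookkeeping, so there is no serious obstacle; the only point demanding a little care is that Lemma~\ref{equ} distinguishes $\phi_{\sigma}(F)\prec\phi_{\sigma}(E)$, $\phi_{\sigma}(F)\succ\phi_{\sigma}(E)$, and asymptotic equality of the phases, so negating ``$\prec$'' is \emph{not} the same as ``$\succeq$''. The proof must therefore invoke the \emph{strict} conclusion $\phi_{\sigma}(F)\succ\phi_{\sigma}(E)$ coming directly from the ``resp.'' bullet of Lemma~\ref{equ}, rather than merely from the failure of the ``$\prec$'' inequality; the computation of $\mu_{\sigma}(F)$ above is arranged precisely so that this strict bullet applies.
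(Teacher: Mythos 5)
Your proof is correct and follows essentially the same route as the paper: the paper likewise records the identities $\mu_{\sigma}(F)=\mu_{i\omega}(F)-k$ and $-3B\omega^2/\omega^3=-3k$ for $B=k\omega$ and then reduces to the computation in Lemma~\ref{equ}. Your extra care about the trichotomy in Lemma~\ref{equ} and the zero-dimensional case is sound but does not change the argument.
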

\begin{proof}
If $\sigma=k\omega +i\omega$, we have
$$\mu_{\sigma}(F)
=\mu_{i\omega}(F)-k, 
\quad -\frac{3B\omega^2}{\omega^3}=-3k,
$$
for $B=k\omega$. Hence the
inequality (\ref{adv2}) follows from the same 
argument as in Lemma~\ref{equ}. 
\end{proof}
We also note that $(k\omega, \omega)$-twisted 
(semi)stable sheaves coincide with $\omega$-Gieseker
(semi)stable sheaves. 

We set $\mu_{n, \beta} \in \mathbb{Q}$ as follows, 
 $$\mu_{n, \beta}=\max \left\{ 
 \frac{n-m(\beta'')}{\omega \beta'} :  
 0\neq \beta', \beta'' \in \nN(\beta)
  \mbox{ and }\beta=\beta'+\beta''
 \right\}.$$  
 Since $\nN(\beta)$ is a finite set, we have $\mu_{n, \beta}<\infty$. 
The following is the main result of this section. 
\begin{thm}\label{LiPT}
Let $\sigma =k\omega +i\omega$ for $k\in \mathbb{R}$. We have, 
\begin{align*}
\lL_n^{\sigma}(X, \beta) &=P_{n}(X, \beta), \quad 
 L_{n, \beta}(\sigma) =P_{n, \beta}, \quad 
\mbox{ if }k<-\mu_{n, \beta}/2, \\
\lL_n^{\sigma}(X, \beta) &=P_{-n}(X, \beta), \quad 
 L_{n, \beta}(\sigma) =P_{-n, \beta}, \quad 
\mbox{ if }k>\mu_{-n, \beta}/2.\end{align*}
\end{thm}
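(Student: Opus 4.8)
The plan is to show that for $k \ll 0$ the moduli space $\lL_n^{\sigma}(X,\beta)$ coincides, as a set and with its scheme/obstruction structure, with the Pandharipande--Thomas moduli space $P_n(X,\beta)$, and that for $k \gg 0$ it coincides with $P_{-n}(X,\beta)$ via the dualizing functor $\mathbb{D}$. The equality of invariants then follows because Behrend's function $\nu_M$ depends only on the scheme structure, so once we identify the two moduli spaces (together with the identification of their symmetric perfect obstruction theories, both arising from the deformation theory of objects in $D^b(X)$) we get $L_{n,\beta}(\sigma) = P_{n,\beta}$, resp. $P_{-n,\beta}$.

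\textbf{Step 1: the case $k < -\mu_{n,\beta}/2$.} First I would show $L_n^{\sigma}(X,\beta) = P_n(X,\beta)$ as sets of objects. By Lemma~\ref{stabpair}, an object $E \in L_n(X,\beta)$ is a stable pair exactly when $\hH^0(E)$ is zero dimensional, so it suffices to show: (i) every stable pair $I^{\bullet}$ with $\ch_2(F) = \beta$, $\ch_3(F) = n$ is $\sigma$-limit stable for $k < -\mu_{n,\beta}/2$; (ii) conversely every $\sigma$-limit stable $E$ in $L_n(X,\beta)$ has $\hH^0(E)$ zero dimensional. For (ii): if $\hH^0(E)$ were not zero dimensional it would surject onto a nonzero pure one dimensional sheaf $G$ (by Remark~\ref{rmk:eq}, a surjection of sheaves $\hH^0(E) \twoheadrightarrow G$ gives a strict epimorphism $E \twoheadrightarrow G$), and condition (a) of Lemma~\ref{prop:char} forces $\phi_\sigma(E) \prec \phi_\sigma(G)$; by Lemma~\ref{adv} (applied with $B = k\omega$) this forces $k \ge -\mu_{i\omega}(G)/2$, and since $\ch_2(G) = \beta'' \in \nN(\beta)$ with $\beta'' \neq 0$ and $\ch_3(G) \le n - m(\beta - \beta'')$... actually one needs $\mu_{i\omega}(G) = \ch_3(G)/(\omega\beta'') $ bounded below, which combined with the definition of $\mu_{n,\beta}$ gives $k \ge -\mu_{n,\beta}/2$, contradicting our hypothesis. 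Here the bound $\ch_3(G) \ge n - (\text{something})$ comes from writing $E$ via $I_C$ and $\hH^0(E)$ and using $\ch_3(\oO_C) \ge m(\beta')$ from Lemma~\ref{-infty}, exactly as in Lemma~\ref{lem:final}(ii); this is the step where the quantity $\mu_{n,\beta}$ is designed to make the argument work. For (i): given a stable pair, I check conditions (a) and (b) of Lemma~\ref{prop:char}. Condition (a) is vacuous since $\hH^0(I^{\bullet}) = \Cok(s)$ is zero dimensional so admits no strict epimorphism onto a nonzero pure one dimensional sheaf. For (b), a strict monomorphism $F \hookrightarrow I^{\bullet}$ with $F$ pure one dimensional decomposes by Lemma~\ref{asfor} as an extension of $F_2 \subset \hH^0(I^{\bullet}) = \Cok(s)$ (zero dimensional, hence $F_2 = 0$ by purity of $F$) by $F_1 \subset \oO_C$; so $F = F_1 \subset \oO_C$ and we must check $\phi_\sigma(F) \prec \phi_\sigma(I^{\bullet})$. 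By Lemma~\ref{equ} this follows once $\mu_\sigma(F) < -3B\omega^2/\omega^3 = -3k$, i.e. $\mu_{i\omega}(F) - k < -3k$, i.e. $k < -\mu_{i\omega}(F)/2$; and $\mu_{i\omega}(F) = \ch_3(F)/(\omega\ch_2(F))$ with $\ch_2(F) \in \nN(\beta)$ nonzero and $\ch_3(F) \le \ch_3(\oO_C) \le \ch_3(\oO_{\overline C}) = n - \ch_3(\Cok(s))$, which is bounded above by $n$ minus $m$ of the complementary class — again giving $k < -\mu_{n,\beta}/2$ as sufficient. (One must be a little careful about the borderline equality case $\mu_\sigma(F) = -3k$ and use the second clause (\ref{ineq2}) of Lemma~\ref{equ}; I expect this to go through by the same boundedness bookkeeping.)

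\textbf{Step 2: scheme structures and obstruction theories agree.} Having matched the closed points, I would argue that the open immersion $\lL_n^{\sigma}(X,\beta) \subset \mM_0^{\mathrm{et}}$ and the embedding of $P_n(X,\beta)$ into the same moduli stack of objects in $D^b(X)$ (recall from the introduction that the PT obstruction theory comes from deformations of the complex $I^{\bullet}$, not of the pair) identify the two as open subspaces of $\mM_0^{\mathrm{et}}$ carrying the same point sets; hence they are isomorphic as algebraic spaces, and their obstruction theories — both the one making $\lL_n^{\sigma}$ a candidate for virtual counting and the PT one — coincide since both are the natural symmetric obstruction theory on the fixed-determinant moduli of simple objects on the Calabi-Yau 3-fold. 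Therefore $\nu_L = \nu_P$ under this isomorphism and $L_{n,\beta}(\sigma) = P_{n,\beta}$.

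\textbf{Step 3: the case $k > \mu_{-n,\beta}/2$ via duality.} For this range I would apply Lemma~\ref{compati}: $E \in \aA^p_{1/2}$ is $\sigma$-limit stable iff $\mathbb{D}(E)$ is $\sigma^{\vee}$-limit stable, where $\sigma^{\vee} = -B + i\omega = -k\omega + i\omega$ when $\sigma = k\omega + i\omega$. Tracking Chern characters under $\mathbb{D}$: if $E$ satisfies $(\ch_0,\ch_1,\ch_2,\ch_3)(E) = (-1,0,\beta,n)$ then $\ch(\mathbb{D}(E))$ has components $(-1, 0, \beta, -n)$ (using $\td_X$ has no odd part on a Calabi-Yau $3$-fold, so $\mathbb{D}$ negates $\ch_1$ and $\ch_3$ and fixes $\ch_0,\ch_2$, up to the shift conventions already set in the proof of Lemma~\ref{compati}), and $\det \mathbb{D}(E) = \oO_X$ is preserved. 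Hence $\mathbb{D}$ induces an isomorphism $\lL_n^{\sigma}(X,\beta) \cong \lL_{-n}^{\sigma^{\vee}}(X,\beta)$, and $\sigma^{\vee} = k'\omega + i\omega$ with $k' = -k < -\mu_{-n,\beta}/2$ falls into the range of Step 1 applied to degree $-n$. Thus $\lL_n^{\sigma}(X,\beta) \cong \lL_{-n}^{\sigma^{\vee}}(X,\beta) = P_{-n}(X,\beta)$ and, since $\mathbb{D}$ preserves the symmetric obstruction theory and hence $\nu$, we get $L_{n,\beta}(\sigma) = L_{-n,\beta}(\sigma^{\vee}) = P_{-n,\beta}$.

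\textbf{Expected main obstacle.} The routine but delicate point is the boundedness bookkeeping in Step 1 — carefully checking that the threshold defined by $\mu_{n,\beta}$ (via $\nN(\beta)$, $m(\beta'')$ and Lemma~\ref{lem:final}) is exactly what is needed to rule out, for $k$ outside a compact interval, any destabilizing sub- or quotient object with one-dimensional support, and handling the equality/borderline phase case through clause (\ref{ineq2}) of Lemma~\ref{equ}. The more conceptually subtle point is Step 2: one must be sure that the obstruction theory used to define $L_{n,\beta}(\sigma)$ really is the same symmetric perfect obstruction theory that defines $P_{n,\beta}$ — this is plausible since both are read off from $\dR\!\Hom$ of the universal complex on the common ambient space $\mM_0^{\mathrm{et}}$, but it is the step where I would need to be most careful to state precisely what "the same obstruction theory" means and why Behrend's function therefore agrees.
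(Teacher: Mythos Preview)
Your overall strategy matches the paper's: identify $\mathbb{C}$-points of the two moduli spaces using Lemma~\ref{stabpair} and Lemma~\ref{prop:char}, then use that both are open in $\mM_0^{\rm et}$ to conclude the isomorphism, and handle the second range by duality via Lemma~\ref{compati}. However, there is a genuine error in your Step~1(i), and Step~2 is overcomplicated.

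\textbf{The error in Step~1(i).} You write that in the extension $0 \to F_1 \to F' \to F_2 \to 0$ of Lemma~\ref{asfor}, since $F_2 \subset \hH^0(I^{\bullet}) = \Cok(s)$ is zero dimensional, ``$F_2 = 0$ by purity of $F'$''. This is wrong: purity of $F'$ forbids zero dimensional \emph{subsheaves}, not quotients, and $F_2$ is a quotient of $F'$. A pure one dimensional sheaf can certainly surject onto a zero dimensional sheaf. The paper does not attempt to kill $F_2$; instead it bounds
\[
\ch_3(F') = \ch_3(F_1) + \ch_3(F_2) \le \bigl(\ch_3(\oO_C) - m(\beta'')\bigr) + \ch_3(\hH^0(E)) = n - m(\beta''),
\]
where $\beta'' = [C']$ with $\oO_C/F_1 \cong \oO_{C'}$, using $\ch_3(\oO_{C'}) \ge m(\beta'')$. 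Your subsequent claim ``$\ch_3(F) \le \ch_3(\oO_C)$'' is also not justified: since $m(\beta'')$ can be negative, $\ch_3(F_1)$ may exceed $\ch_3(\oO_C)$. The point of the definition of $\mu_{n,\beta}$ is precisely to absorb both contributions; once you use the correct bound $\ch_3(F') \le n - m(\beta'')$ with $\beta' = \ch_2(F') \in \nN(\beta)$, you get $\mu_{i\omega}(F') \le \mu_{n,\beta}$ and hence $k < -\mu_{i\omega}(F')/2$, which by (the contrapositive of) Lemma~\ref{adv} gives $\phi_{\sigma}(F') \prec \phi_{\sigma}(E)$. Your worry about the borderline case in Lemma~\ref{equ} is unnecessary here: the strict inequality $k < -\mu_{n,\beta}/2$ rules out equality.

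\textbf{Step~1(ii).} Your argument via a pure quotient $G$ of $\hH^0(E)$ works but is more roundabout than needed. The paper simply uses the surjection $E \twoheadrightarrow \hH^0(E)$ in $\aA^p$ directly: limit stability gives $\phi_{\sigma}(E) \prec \phi_{\sigma}(\hH^0(E))$, and then Lemma~\ref{adv} together with $\ch_3(\hH^0(E)) \le n - m(\beta'')$ from Lemma~\ref{lem:final}(ii) yields the contradiction $k \ge -\mu_{n,\beta}/2$.

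\textbf{Step~2 is simpler than you fear.} You do not need to match obstruction theories. Both $\lL_n^{\sigma}(X,\beta)$ and $P_n(X,\beta)$ are open algebraic subspaces of the same ambient space $\mM_0^{\rm et}$; once their $\mathbb{C}$-points coincide they are literally the same open subspace. Since $L_{n,\beta}(\sigma)$ is \emph{defined} via Behrend's function $\nu_L$, which depends only on the scheme structure, the equality $L_{n,\beta}(\sigma) = P_{n,\beta}$ is immediate. No comparison of virtual classes is required.
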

\begin{proof}
First assume $k<-\mu_{n, \beta}/2$ and take $E\in L_n^{\sigma}(X, \beta)$.
(cf.~(\ref{L2}).)
In order to show $E$ is isomorphic to a stable pair (\ref{term}), it 
suffices to check that $\hH^0(E)$ is zero dimensional by Lemma~\ref{stabpair}. 
Suppose by a contradiction that $\hH^0(E)$ is one dimensional, 
and set 
$$\beta'=\ch_2(\hH^0(E)) \neq 0, \quad \beta''=-\ch_2 (\hH^{-1}(E)).$$ 
By Lemma~\ref{lem:final}, we have 
$$\ch_3(\hH^0(E)) \le n-m(\beta'').$$
Since $E$ is $\sigma$-limit stable, we must have
$\phi_{\sigma}(E)\prec \phi_{\sigma}(\hH^0(E))$. 
Lemma~\ref{adv} implies that 
$$k\ge -\frac{\mu_{i\omega}(\hH^0(E))}{2}=
-\frac{\ch_3(\hH^0(E))}{2\omega \beta'} \ge 
-\frac{n-m(\beta'')}{2\omega \beta'}\ge -\frac{\mu_{n, \beta}}{2}.$$
This contradicts that $k<-\mu_{n, \beta}/2$, hence 
$E$ is isomorphic to a stable pair. 

Conversely take a stable pair $E\cong 
(\oO_X \stackrel{s}{\to} F) \in P_n(X, \beta)$.
We have to check the conditions $(a)$, $(b)$ in Lemma~\ref{prop:char}. 
Let $E\twoheadrightarrow G$ be a strict epimorphism
 as in $(a)$ in Lemma~\ref{prop:char}. 
By Remark~\ref{rmk:eq}, 
$G$ is obtained as a surjection of sheaves $\hH^0(E) \twoheadrightarrow G$. 
Since $\hH^0(E)$ is zero dimensional, $G$ is also a zero dimensional 
sheaf, hence $G\notin \aA_{1/2}^p$ provided $G\neq 0$. This means that $(a)$ does not
occur, so it is enough 
to check $(b)$ in Lemma~\ref{prop:char}. 

Let $F' \hookrightarrow E$ be a strict monomorphism 
 as in $(b)$ in Lemma~\ref{prop:char}, 
and set $\beta'=\ch_2(F')$. 
Let $C\subset X$ be a one dimensional subscheme with 
$\hH^{-1}(E)=I_C$. We can take subsheaves
$F_1 \subset \oO_C$, $F_2 \subset \hH^0(E)$
as in Lemma~\ref{asfor}, such that $F'$ is written as an 
extension 
$$0 \lr F_1 \lr F' \lr F_2 \lr 0.$$
Since $\oO_C/F_1 \cong \oO_{C'}$ for a subscheme $C' \subset C$, 
we have 
\begin{align*}
\ch_3(F_1) &=\ch_3(\oO_C)-\ch_3(\oO_{C'}) \\
&\le \ch_3(\oO_C)-m(\beta''),
\end{align*}
where $\beta''=\ch_2(\oO_{C'})$. Hence we have
\begin{align*}
\ch_3(F') &= \ch_3(F_1)+\ch_3(F_2) \\
&\le \ch_3(\oO_C) +\ch_3(\hH^0(E)) -m(\beta'') \\
&=n-m(\beta'').
\end{align*}
We obtain the inequality,
\begin{align*}
k<-\frac{\mu_{n, \beta}}{2}\le -\frac{n-m(\beta'')}{2\beta' \omega}
\le -\frac{\mu_{i\omega}(F')}{2},
\end{align*}
which implies $\phi_{\sigma}(F')\prec \phi_{\sigma}(E)$
by Lemma~\ref{adv}.
Therefore $(b)$ in Lemma~\ref{prop:char} holds, hence 
$E\in L_n^{\sigma}(X, \beta)$. 

By the above arguments, 
the set of $\mathbb{C}$-valued points of 
$\lL_n^{\sigma}(X, \beta)$ and $P_n(X, \beta)$ 
are identified. Therefore we have the isomorphism of the 
moduli spaces, 
$$\lL_n^{\sigma}(X, \beta)\cong P_n(X, \beta),$$
since both are open algebraic subspaces of $\mM_0^{\rm{et}}$. 
In particular $L_{n, \beta}(\sigma)=P_{n, \beta}$ follows. 
When $k>\mu_{-n, \beta}/2$,
we have
$$L_{n, \beta}(\sigma)=L_{-n, \beta}(\sigma^{\vee})=P_{-n, \beta},$$
by applying the dualizing functor $\mathbb{D}$
and using Lemma~\ref{compati}. 
\end{proof}

\begin{rmk}\emph{ 
The wall-crossing phenomena for stable pairs is also studied in 
Bayer's polynomial stability conditions~\cite[Paragraph 6.2]{Bay}. However our
wall-crossing is crucially different from Bayer's wall-crossing. 
In fact the complexified
ample cone $A(X)_{\mathbb{C}}$, the stability parameter 
in our stability conditions, behaves itself as a wall in the wall-crossing 
of Bayer~\cite[Paragraph 6.2]{Bay}.}
\end{rmk}

\subsection{Wall-crossing phenomena of limit stable objects}\label{sub:wall}
Let $\sigma=k\omega+i\omega$ be as in the previous paragraph. 
In this paragraph, we investigate how $\sigma$-limit stable 
objects vary under change of $k\in \mathbb{R}$. 
As we have seen in Theorem~\ref{LiPT},  
limit stable objects coincide with stable pairs for $k\ll 0$, 
and the dual of stable pairs for $k\gg 0$. 
We look at the wall-crossing phenomena more closely, which 
hopefully might  
be helpful for the rationality conjecture 
of the generating functions of PT-invariants, proposed in~\cite{PT}.

For an effective class $\beta \in H^4(X, \mathbb{Z})$, 
we set $\sS(\beta) \subset \mathbb{R}$ as 
$$\sS(\beta)\cneq \left\{
\frac{m}{2\omega \gamma} : 0\neq \gamma \in \nN(\beta),  m\in \mathbb{Z}
\right\} \subset \mathbb{R},$$
where $\nN(\beta)$ is introduced in (\ref{N(beta)}). 
Note that $\sS(\beta)$ is a discrete subset in $\mathbb{R}$
because $\nN(\beta)$ is a finite set. 
In the following, we see that $\sS(\beta)$ behaves as the 
set of walls. 
\begin{prop}
Let $\cC \subset \mathbb{R}\setminus
\sS(\beta)$ be one of the connected components. 
For $k, k' \in \cC$, we have 
$$\lL_n^{\sigma}(X, \beta)=\lL_n^{\sigma'}(X, \beta), $$
where $\sigma =k\omega +i\omega$, $\sigma'=k'\omega +i\omega$. 
In particular the function, 
$$\mathbb{R} \ni k \longmapsto L_{n, \beta}(k\omega +i\omega)\in\mathbb{Z}, $$
is constant on $\cC$. 
\end{prop}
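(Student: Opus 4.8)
The plan is to reduce the statement to the numerical characterization of $\sigma$-limit stability in Lemma~\ref{prop:char}, and then to observe that, for $\sigma$ of the form $k\omega+i\omega$, the only values of $k$ at which the individual stability tests can switch truth value are exactly the points of $\sS(\beta)$.

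First I would fix an object $E\in L_n(X,\beta)$ and recall from Lemma~\ref{prop:char} that, for $\sigma=k\omega+i\omega$, the object $E$ is $\sigma$-limit stable if and only if conditions $(a)$ and $(b)$ of that lemma hold. The index set of $(a)$ consists of all nonzero pure one dimensional sheaves $G$ admitting a strict epimorphism $E\twoheadrightarrow G$ in $\aA^p_{1/2}$, and the index set of $(b)$ of all nonzero pure one dimensional sheaves $F$ admitting a strict monomorphism $F\hookrightarrow E$ in $\aA^p_{1/2}$; crucially, both index sets depend only on $E$ and on the category $\aA^p_{1/2}$, not on $\sigma$. By Remark~\ref{rmk:eq} every such $G$ is a quotient sheaf of $\hH^0(E)$, and by Lemma~\ref{asfor} every such $F$ is an extension of a subsheaf of $\oO_C$ by a subsheaf of $\hH^0(E)$, where $C\subset X$ is as in Lemma~\ref{lem:C}. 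Combined with the identity $(\ref{rmk:bou})$ of Remark~\ref{turnout}, this shows that $\ch_2(G)$ and $\ch_2(F)$ are nonzero effective classes lying in the finite set $\nN(\beta)$.

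Next, for each such $G$ (resp. $F$) I would apply Lemma~\ref{equ} with $B=k\omega$, so that $-3B\omega^2/\omega^3=-3k$ and $\mu_\sigma(\cdot)=\mu_{i\omega}(\cdot)-k$ on $\Coh_{\le 1}(X)$ (cf. Lemma~\ref{adv}), to see that, as long as $k\neq -\mu_{i\omega}(G)/2$ (resp. $k\neq -\mu_{i\omega}(F)/2$), the phase inequality $\phi_{\sigma}(E)\prec\phi_{\sigma}(G)$ required in $(a)$ holds precisely when $k>-\mu_{i\omega}(G)/2$, while the inequality $\phi_{\sigma}(F)\prec\phi_{\sigma}(E)$ required in $(b)$ holds precisely when $k<-\mu_{i\omega}(F)/2$. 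In both cases the transition value is $-\mu_{i\omega}(G)/2=-\ch_3(G)/(2\omega\ch_2(G))$, resp. $-\mu_{i\omega}(F)/2=-\ch_3(F)/(2\omega\ch_2(F))$, which belongs to $\sS(\beta)$ since $\ch_2(G),\ch_2(F)\in\nN(\beta)\setminus\{0\}$ and $\ch_3(G),\ch_3(F)\in\mathbb{Z}$. Now, since $\cC$ is a connected component of $\mathbb{R}\setminus\sS(\beta)$, for any $k,k'\in\cC$ and any $s\in\sS(\beta)$ the numbers $k-s$ and $k'-s$ have the same nonzero sign; applying this with $s=-\mu_{i\omega}(G)/2$ and $s=-\mu_{i\omega}(F)/2$ over the $\sigma$-independent index sets of $(a)$ and $(b)$, I conclude that $(a)$ and $(b)$ hold for $\sigma=k\omega+i\omega$ if and only if they hold for $\sigma'=k'\omega+i\omega$. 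Hence $L_n^{\sigma}(X,\beta)=L_n^{\sigma'}(X,\beta)$ as sets of objects. Since by Theorem~\ref{ready} both $\lL_n^{\sigma}(X,\beta)$ and $\lL_n^{\sigma'}(X,\beta)$ are open algebraic subspaces of $\mM_0^{\rm{et}}$ with the same closed points, they are equal; and as $L_{n,\beta}(\sigma)$ is defined through Behrend's function, which depends only on the algebraic space structure, the function $k\mapsto L_{n,\beta}(k\omega+i\omega)$ is constant on $\cC$.

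The part that demands the most care, rather than being a genuine obstacle, is the bookkeeping around the boundary clause of Lemma~\ref{equ}, namely $\mu_\sigma(F)=-3B\omega^2/\omega^3$ together with $(\ref{ineq2})$: one must check that this never intervenes, which is automatic here because $\cC$ contains no point of $\sS(\beta)$ and so $k$ is never equal to a transition value $-\mu_{i\omega}(F)/2$ or $-\mu_{i\omega}(G)/2$. The only other point to verify carefully is that the index sets of $(a)$ and $(b)$ are truly independent of $\sigma$ — which is visible directly from the formulation of Lemma~\ref{prop:char} — and that the relevant $\ch_2$-classes lie in $\nN(\beta)$, which follows from Lemma~\ref{lem:C}, Lemma~\ref{asfor} and Remark~\ref{rmk:eq}. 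With these in hand the argument is entirely formal.
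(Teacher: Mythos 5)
Your proof is correct and follows essentially the same route as the paper: both reduce to the numerical criteria $(a)$, $(b)$ of Lemma~\ref{prop:char}, use Lemma~\ref{equ}/Lemma~\ref{adv} to locate the transition value $-\mu_{i\omega}(\cdot)/2$ of each test, and use Remark~\ref{rmk:eq}, Lemma~\ref{asfor} and the finiteness of $\nN(\beta)$ to place these values in $\sS(\beta)$. The only cosmetic difference is that the paper phrases the argument as a contradiction (a $\sigma'$-destabilizer would already destabilize at $\sigma$), whereas you verify directly that each test has the same truth value at $k$ and $k'$; the content is identical.
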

\begin{proof}
For $E\in L_n^{\sigma}(X, \beta)$, assume that 
$E\notin L_n^{\sigma'}(X, \beta)$. 
Then at least one of the conditions $(a)$ or $(b)$
in Lemma~\ref{lem:C} does not hold. 
Suppose that $(a)$ does not hold and let $E\twoheadrightarrow G$
be a strict epimorphism with $G\in \Coh_{\le 1}(X)$, 
which destablizes $E$.
Then Lemma~\ref{equ} and Lemma~\ref{adv} show, 
\begin{align}\label{RHS}
k' \le -\frac{\mu_{i\omega}(G)}{2}.\end{align}
Since $\ch_2(G) \in \nN(\beta)$ by
 Remark~\ref{rmk:eq}, the right hand side of 
(\ref{RHS}) is an element of $\sS(\beta)$. 
Therefore $k$ satisfies the inequality
$k< -\mu_{i\omega}(G)/2$, 
which implies $\phi_{\sigma}(E)\succ \phi_{\sigma}(G)$. 
This contradicts that $E\in L_n^{\sigma}(X, \beta)$. 

The case that the condition 
$(b)$ in Lemma~\ref{lem:C} does not hold 
is similarly discussed, noting Lemma~\ref{asfor} which shows that
$\ch_2(F) \in \nN(\beta)$ for destablizing monomorphism $F\hookrightarrow E$.
\end{proof}

Next we investigate 
the wall-crossing phenomena at some point $-\mu/2 \in \sS(\beta)$. 
Let $\cC_{-}$, $\cC_{+}$ be connected components in $\mathbb{R}\setminus 
\sS(\beta)$ such that
$$\cC_{-}\subset \mathbb{R}_{<-\mu/2}, \quad
 \cC_{+}\subset \mathbb{R}_{>-\mu/2},
\quad \overline{\cC}_{-}\cap \overline{\cC}_{+}
=\left\{-\frac{\mu}{2}\right\}.$$
Let us take $k_{-}\in \cC_{-}$, $k_{+}\in \cC_{+}$
and $k_0=-\mu/2$. 
We set $\sigma_{\ast}=k_{\ast}\omega+i\omega$
for $\ast=\pm, 0$. 
We have the following proposition. 
\begin{prop}\label{wall-cross}
(i) Assume that $E\in L_n^{\sigma_{-}}(X, \beta)$ is
not $\sigma_{+}$-limit stable. Then there is 
an exact sequence in $\aA^p_{1/2}$, 
\begin{align}\label{wall1}
0 \lr F \lr E \lr G \lr 0,
\end{align}
such that $F$ is $\omega$-Gieseker semistable sheaf with 
$\mu_{i\omega}=\mu$ and $G$ is $\sigma_{+}$-limit stable
with $\phi_{\sigma_{+}}(F) \succ \phi_{\sigma_{+}}(G)$. 
i.e. (\ref{wall1}) is a Harder-Narasimhan filtration in $\sigma_{+}$. 
The object $G$ is also $\sigma_0$-limit stable. 

(ii) Assume that $E\in L_n^{\sigma_{+}}(X, \beta)$ is
not $\sigma_{-}$-limit stable. Then there is 
an exact sequence in $\aA^p_{1/2}$, 
\begin{align}\label{wall2}
0 \lr G \lr E \lr F \lr 0,
\end{align}
such that $F$ is $\omega$-Gieseker semistable sheaf with 
$\mu_{i\omega}=\mu$ and $G$ is $\sigma_{-}$-limit stable
with $\phi_{\sigma_{-}}(G) \succ \phi_{\sigma_{-}}(F)$. 
i.e. (\ref{wall2}) is a Harder-Narasimhan filtration in $\sigma_{-}$. 
The object $G$ is also $\sigma_0$-limit stable.
\end{prop}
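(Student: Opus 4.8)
The plan is to prove part (i) in detail; part (ii) then follows by applying the dualizing functor $\mathbb{D}$. By Lemma~\ref{dual} and Lemma~\ref{compati}, $\mathbb{D}$ is an exact anti\-equivalence of $\aA^p_{1/2}$, it preserves $\omega$-Gieseker semistability and negates the slope $\mu_{i\omega}$, and it exchanges $\sigma_{\pm}$ with $\sigma_{\mp}^{\vee}$; hence the exact sequence (\ref{wall1}) for $\mathbb{D}(E)$ at the wall $-\mu$ dualizes to (\ref{wall2}) for $E$ at the wall $\mu$. So let $E\in L_n^{\sigma_-}(X,\beta)$ fail to be $\sigma_+$-limit stable. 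First I would show that in Lemma~\ref{prop:char} it is condition $(b)$, not $(a)$, that fails at $\sigma_+$: a strict epimorphism $E\twoheadrightarrow G_0$ onto a nonzero pure one dimensional sheaf with $\phi_{\sigma_+}(G_0)\preceq\phi_{\sigma_+}(E)$ would, combined with $\phi_{\sigma_-}(E)\prec\phi_{\sigma_-}(G_0)$ from $\sigma_-$-limit stability, give via Lemma~\ref{adv} the impossible chain $k_+\le-\mu_{i\omega}(G_0)/2\le k_-$. Thus there is a strict monomorphism $F_0\hookrightarrow E$ in $\aA^p_{1/2}$ from a nonzero pure one dimensional sheaf with $\phi_{\sigma_+}(F_0)\succeq\phi_{\sigma_+}(E)$. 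Combining $\phi_{\sigma_-}(F_0)\prec\phi_{\sigma_-}(E)$ with $\phi_{\sigma_+}(F_0)\succeq\phi_{\sigma_+}(E)$ and using $\ch_2(F_0)\in\nN(\beta)$ (Lemma~\ref{asfor}), Lemma~\ref{adv} places $-\mu_{i\omega}(F_0)/2$ in $\sS(\beta)\cap[k_-,k_+]$; since the only element of $\sS(\beta)$ between $k_-$ and $k_+$ is $k_0=-\mu/2$, we get $\mu_{i\omega}(F_0)=\mu$. Conversely, Lemma~\ref{equ} and $k_+>-\mu/2$ show that \emph{every} pure one dimensional subsheaf $F'$ of $E$ of slope $\mu$ already has $\phi_{\sigma_+}(F')\succ\phi_{\sigma_+}(E)$; in particular $E$ is not even $\sigma_+$-limit semistable.

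Next I would set $F:=E_1$, the maximal destabilizing subobject in the $\sigma_+$-Harder--Narasimhan filtration of $E$ (which exists by Theorem~\ref{property}(i)), and $G:=E/F$. Using Lemma~\ref{infty}, Lemma~\ref{iff2} and the fact that $\aA^p_{1/2}$ is closed under subobjects and extensions, one sees that every $\sigma_+$-HN factor of $E$ lies in $\aA^p_{1/2}$, so $G\in\aA^p_{1/2}$ and (\ref{wall1}) is an exact sequence in $\aA^p_{1/2}$ with $F$ $\sigma_+$-limit semistable and $\phi_{\sigma_+}(F)\succ\phi_{\sigma_+}(G)$. I would then show $\ch_0(F)=0$, i.e. $F$ is a pure one dimensional sheaf: if $\ch_0(F)=-1$, the twisted Mukai vector expansion (\ref{formula1}) applied to the pair $(F,E)$ forces $\ch_1(\hH^{-1}(F))=0$ (otherwise the leading term would give $\phi_{\sigma_+}(F)\prec\phi_{\sigma_+}(E)$), whence $\hH^{-1}(F)=\hH^{-1}(E)$ and $G=E/F$ is a pure one dimensional sheaf with $\phi_{\sigma_+}(G)\prec\phi_{\sigma_+}(E)$ --- a destabilizing quotient, contradicting the first paragraph. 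Since $F$ is then a $\sigma_+$-limit semistable pure one dimensional subsheaf of $E$ of slope $\mu_{i\omega}(F)=\mu$, and $(k_+\omega,\omega)$-twisted semistability coincides with $\omega$-Gieseker semistability, $F$ is $\omega$-Gieseker semistable, hence also $\sigma_0$-limit semistable.

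Finally I would prove that $G$ is $\sigma_+$-limit stable and $\sigma_0$-limit stable. If $G$ were destabilized at $\sigma_+$ (resp. at $\sigma_0$) by a nonzero proper subobject $A\subsetneq G$, its preimage $\widetilde A$ in $E$ would satisfy $F\subsetneq\widetilde A\subsetneq E$, would destabilize $E$ at $\sigma_+$ (resp. at $\sigma_0$), yet would satisfy $\phi_{\sigma_-}(\widetilde A)\prec\phi_{\sigma_-}(E)$ by $\sigma_-$-limit stability of $E$. Analyzing $\widetilde A$ through $0\to F\to\widetilde A\to A\to 0$, Lemma~\ref{lem:C} (so $\ch_0(\widetilde A)\in\{0,-1\}$) and the wall argument above (so $A$ again has slope $\mu$), this change of sign of $\phi_{\sigma}(\widetilde A)-\phi_{\sigma}(E)$ between $\sigma_-$ and $\sigma_+$ would require an element of $\sS(\beta)$ lying strictly inside one of the wall\-free chambers $\cC_-$, $\cC_+$, a contradiction. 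The remaining assertion $\phi_{\sigma_+}(F)\succ\phi_{\sigma_+}(G)$ of (\ref{wall1}) has already been noted.

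I expect the last paragraph to be the main obstacle. Unlike the comparisons at $\sigma_\pm$, the comparison at the wall $\sigma_0$ cannot be settled by soft phase inequalities; one must run the Mukai vector bookkeeping of Lemma~\ref{equ}, separating the cases $\ch_0(\widetilde A)=0$ and $\ch_0(\widetilde A)=-1$, to exhibit the forbidden point of $\sS(\beta)$. A secondary subtlety is to see that the $\sigma_+$-Harder--Narasimhan filtration of $E$ is genuinely the two\-step sequence (\ref{wall1}) with $G$ stable rather than merely semistable, which is precisely what the argument of that paragraph delivers.
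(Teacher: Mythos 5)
Your first two paragraphs track the paper's proof closely (ruling out a destabilizing quotient via the impossible chain $k_+\le-\mu_{i\omega}(G_0)/2\le k_-$, pinning the slope of a destabilizing subsheaf to $\mu$ via $\sS(\beta)$, identifying Gieseker semistability of $F$), and the two deviations there are harmless: deducing (ii) by the dualizing functor works, and taking $F=E_1$ from the $\sigma_+$-Harder--Narasimhan filtration instead of the paper's inclusion-maximal pure one-dimensional destabilizing subsheaf is legitimate once you have shown, as you do, that $\ch_0(E_1)=0$ (the two choices in fact coincide, since all destabilizing pure one-dimensional subsheaves have slope $\mu$ and hence the same $\sigma_+$-phase).

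The genuine gap is the last paragraph --- exactly where you anticipated trouble --- and your proposed fix does not work. The sign change of $\phi_\sigma(\widetilde A)-\phi_\sigma(E)$ between $\sigma_-$ and $\sigma_+$ (resp.\ $\sigma_0$) does \emph{not} force a point of $\sS(\beta)$ strictly inside a chamber: running Lemma~\ref{equ}, the inequalities $\phi_{\sigma_-}(\widetilde A)\prec\phi_{\sigma_-}(E)$ and $\phi_{\sigma_+}(\widetilde A)\succeq\phi_{\sigma_+}(E)$ only give $-\mu_{i\omega}(\widetilde A)/2\in\sS(\beta)\cap[k_-,k_+]$, and this is satisfied at $k_0=-\mu/2$ itself; indeed $\mu_{i\omega}(\widetilde A)=\mu$ on the nose, being a weighted average of the slopes of $F$ and $A$, both equal to $\mu$. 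So there is no forbidden wall and no contradiction from that mechanism. The contradiction must instead come from maximality of $F$, which is how the paper argues: $\widetilde A$ is a pure one-dimensional subsheaf of $E$ of slope $\mu$, hence with $\phi_{\sigma_+}(\widetilde A)\succ\phi_{\sigma_+}(E)$, strictly containing $F$ --- contradicting the paper's choice of $F$ as an inclusion-maximal such subsheaf, or, in your setup, the defining property of the maximal destabilizing subobject $E_1$ (any subobject of phase $\phi_{\sigma_+}(E_1)$ is contained in $E_1$). Your choice $F=E_1$ makes this repair available, but the proof as written invokes the wrong closing step. A secondary flaw in the same paragraph: ``$\widetilde A$ would destabilize $E$ at $\sigma_+$'' is not a soft seesaw consequence of $\phi_{\sigma_+}(A)\succeq\phi_{\sigma_+}(G)$ and $\phi_{\sigma_+}(F)\succ\phi_{\sigma_+}(E)$, since $\phi_{\sigma_+}(A)$ could a priori lie between $\phi_{\sigma_+}(G)$ and $\phi_{\sigma_+}(E)$ with $|Z(A)|$ dominating; one needs the slope bookkeeping $\mu_{i\omega}(F),\,\mu_{i\omega}(A)>-2k_+$ (the latter from Lemma~\ref{adv} applied to $A\hookrightarrow G$ and $k_+\notin\sS(\beta)$) to conclude $\mu_{i\omega}(\widetilde A)>-2k_+$.
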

\begin{proof}
The proof of (ii) is identical to (i), so we 
only show (i). 
Assume that $E\in L_n^{\sigma_{-}}(X, \beta)$ is not 
$\sigma_{+}$-limit stable. By Lemma~\ref{prop:char}, we have 
one of the two possibilities. 

$(a')$ There is a non-zero pure one dimensional sheaf $G$
which admits a strict epimorphism $E\twoheadrightarrow G$
in $\aA_{1/2}^p$ with $\phi_{\sigma_{+}}(E)\succeq \phi_{\sigma_{+}}(G)$.

$(b')$ There is a non-zero pure one dimensional sheaf $F$
which admits a strict monomorphism $F\hookrightarrow E$
in $\aA_{1/2}^p$ with $\phi_{\sigma_{+}}(F)\succeq \phi_{\sigma_{+}}(E)$. 

Suppose that $(a')$ occurs. By Lemma~\ref{adv}, we see that 
$$k_{-}< k_{+} \le -\frac{\mu_{i\omega}(G)}{2}.$$
Therefore we also have $\phi_{\sigma_{-}}(E)\succ \phi_{\sigma_{-}}(G)$, 
which contradicts that $E\in L_n^{\sigma_{-}}(X, \beta)$. 
Hence $(b')$ occurs. 
Let $F\hookrightarrow E$ be as in $(b')$. 
Since $\aA_{1/2}^p$ is of finite length, one can 
take such $F$ to be maximal, i.e. there is no non-trivial
strict monomorphism $F \hookrightarrow \tilde{F} \hookrightarrow E$
such that $\tilde{F}$ is also pure one dimensional sheaf 
with $\phi_{\sigma_{+}}(\tilde{F}) \succeq \phi_{\sigma_{+}}(E)$. 
Since 
$\phi_{\sigma_{+}}(F)\succeq \phi_{\sigma_{+}}(E)$ and 
$\phi_{\sigma_{-}}(F)\prec \phi_{\sigma_{-}}(E)$, 
we have 
$$k_{-} < -\frac{\mu_{i\omega}(F)}{2} \le k_{+}.$$
Since $\ch_2(F) \in \nN(\beta)$, we have 
$-\mu_{i\omega}(F)/2 \in \sS(\beta)$. 
Therefore we have
$$\mu_{i\omega}(F)=\mu, \quad 
k_{+}>-\frac{\mu_{i\omega}(F)}{2},$$
which implies $\phi_{\sigma_{+}}(F)\succ \phi_{\sigma_{+}}(E)$. 
In order to show $F$ is a $\omega$-Gieseker semistable
sheaf, it is enough to show that $F$ is $\sigma_{+}$-limit semistable. 
Let $F' \subset F$ be a strict monomorphism in 
$\aA_{1/2}^p$. 
Note that $F'$ is also pure one dimensional sheaf. 
If $\phi_{\sigma_{+}}(F') \succeq \phi_{\sigma_{+}}(F)$, 
then $\phi_{\sigma_{+}}(F') \succeq \phi_{\sigma_{+}}(E)$, 
hence $\mu_{i\omega}(F')=\mu$ by the same argument as above. 
It follows that $F$ is $\sigma_{+}$-limit semistable. 

Let us take the exact sequence in $\aA_{1/2}^p$, 
$$0 \lr F \lr E \lr G \lr 0.$$
We want to show that $G$ is $\sigma_{*}$-limit stable
for $\ast =+, 0$. 
We show the case of $\ast =0$, as the proof for the other case is similar. 
Suppose the contrary. 
By the same argument as above, there is a strict 
monomorphism $F'' \hookrightarrow G$ in $\aA_{1/2}^p$ 
such that $\phi_{\sigma_{0}}(F'') \succeq \phi_{\sigma_{0}}(G)$. 
We have 
$$k_{+}>k_{0}\ge -\frac{\mu_{i\omega}(F'')}{2},$$
by Lemma~\ref{adv}, which in turn implies 
$\phi_{\sigma_{+}}(F'')\succ \phi_{\sigma_{+}}(E)$. 
Let $F'''$ be the kernel of the composition of the strict epimorphisms 
$$E \twoheadrightarrow G \twoheadrightarrow G/F''.$$ 
Then we have the non-trivial strict monomorphism, 
$F \hookrightarrow F'' \hookrightarrow E$ with 
$\phi_{\sigma_{+}}(F'') \succ \phi_{\sigma_{+}}(E)$, 
which is a contradiction since $F \hookrightarrow E$ is maximal. 
\end{proof}

Let $Z^{\rm{PT}}_{\beta}$ be the generating series, 
\begin{align}\label{generating}
Z^{\rm{PT}}_{\beta}(q)=\sum _{n\in \mathbb{Z}}P_{n, \beta}q^n
\in \mathbb{Q}\db[q\db]. 
\end{align}
In~\cite[Conjecture 3.2]{PT}, Pandharipande and Thomas conjecture that 
the generating series
(\ref{generating}) is a rational function of $q$, invariant 
under $q \mapsto 1/q$. This conjecture (rationality conjecture) is 
solved when $\beta$ is an irreducible curve class 
case in~\cite{PT3}, and
the crucial point is to find a relationship between 
$P_{n, \beta}$ and $P_{-n, \beta}$. 
By Theorem~\ref{LiPT}, it is possible to 
obtain such a relationship in a general situation by
establishing   
a wall-crossing formula of our invariants $L_{n, \beta}(\sigma)$. 
Suppose for instance that any $F\in \Coh_{\le 1}(X)$ which 
appears in the sequence (\ref{wall1}), (\ref{wall2})
 is in fact $\omega$-stable, and satisfies
\begin{align}\label{Gst}
(\ch_2(F), \ch_3(F))=(\beta', n').
\end{align}
Let
$\lL^{-} \subset \lL^{\sigma_{-}}_n(X, \beta)$ 
be the unstable locus in $\sigma_{+}$-limit stability, 
$\lL^{+} \subset \lL^{\sigma_{+}}_n(X, \beta)$
the similar locus, and $M_{n'}(X, \beta')$
 the moduli space of $\omega$-Gieseker
stable sheaf $F\in \Coh_{\le 1}(X)$ 
satisfying (\ref{Gst}).
The destabilizing sequences (\ref{wall1}), (\ref{wall2}) yield
the following diagram,
\begin{align*}\xymatrix{
 \lL^{-} \ar[dr]^{\pi_{-}} & & 
\lL^{+}  \ar[dl]_{\pi_{+}} \\
& M_{n'}(X, \beta')\times \lL^{\sigma_{0}}_{n''}(X, \beta'').&}
\end{align*}
Here $\beta'+\beta''=\beta$ and $n'+n''=n$. 
From the above diagram, one might expect the formula
something like
\begin{align}\label{expect}
L_{n, \beta}(\sigma_{-})-L_{n, \beta}(\sigma_{+})=
(\sharp \Ext^1(G, F)-\sharp \Ext^1(F, G))
N_{n', \beta'}L_{n'', \beta''}(\sigma_0),
\end{align}
where $N_{n', \beta'}$ is the virtual counting, 
$$N_{n', \beta'}=\int_{[M_{n'}(X, \beta')]^{\rm{vir}}}1 \in \mathbb{Z}.$$
We expect that the contribution of the term
 $(\sharp \Ext^1(G, F)- \sharp \Ext^1(F, G))$
is given by 
$$(-1)^{n'-1}\chi(F, G)=(-1)^{n'-1}n',$$
so the 
right hand side of (\ref{expect}) should be $(-1)^{n'-1}n'
N_{n', \beta'}L_{n'', \beta''}(\sigma_0)$. 

In case a destabilizing sheaf $F\in \Coh_{\le 1}(X)$ 
which appears in (\ref{wall1}) or (\ref{wall2}) is strictly semistable, 
the construction of the invariant $N_{n', \beta'}$ is problematic.
It seems that Joyce's motivic invariants of moduli 
stacks~\cite[Definition 3.18]{Joy4} are relevant for this problem, 
although Joyce's invariants are not 
deformation invariant as they do not involve virtual classes.
Hopefully  
it is possible to involve virtual classes 
(probably 
using Behrend's constructible function~\cite{Beh}, )
and the following wall-crossing formula should hold.
\begin{conj}\label{conj}
There is a virtual counting of 
$\omega$-Gieseker semistable sheaves $F\in \Coh_{\le 1}(X)$
with $(\ch_2(F), \ch_3(F))=(\beta', n')$, denoted by 
$N_{n', \beta'}\in \mathbb{Q}$, such that 
\begin{align}\label{expect2}L_{n, \beta}(\sigma_{-})-L_{n, \beta}(\sigma_{+})=
\sum (-1)^{n'-1}n'N_{n', \beta'}L_{n'', \beta''}(\sigma_0).\end{align}
Here in the above sum, $(\beta', n')$, $(\beta'', n'')$ must
satisfy 
$\beta'+\beta''=\beta$, 
 $n'+n''=n$ and $n'/\omega\beta'=\mu$. 
\end{conj}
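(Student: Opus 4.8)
The plan is to deduce the formula from the \emph{local} wall-crossing analysis of Proposition~\ref{wall-cross}, combined with a Behrend-function-weighted refinement of Joyce's Hall-algebra/motivic-invariant formalism~\cite{Joy4}, \cite{Beh}. Proposition~\ref{wall-cross} already pins down, at the wall $k_0=-\mu/2$, exactly which objects change their stability: an $E\in L_n^{\sigma_{-}}(X,\beta)$ that is not $\sigma_{+}$-limit stable sits in an exact sequence $0\to F\to E\to G\to 0$ in $\aA^p_{1/2}$ with $F$ an $\omega$-Gieseker semistable sheaf of slope $\mu_{i\omega}(F)=\mu$ and $G$ a $\sigma_0$-limit stable object of class $(-1,0,\beta'',n'')$; symmetrically on the $(+)$ side one has $0\to G\to E\to F\to 0$. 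First I would note that Behrend's function $\nu$ on $\lL_n^{\sigma_{-}}(X,\beta)$ and on $\lL_n^{\sigma_{+}}(X,\beta)$ agrees on the common open locus of objects that are limit stable on both sides (both are open subspaces of $\mM_0^{\rm{et}}$ with the same scheme structure there), so the difference $L_{n,\beta}(\sigma_{-})-L_{n,\beta}(\sigma_{+})$ equals the difference of the Behrend-weighted Euler characteristics $e_{\nu}(Y)\cneq\sum_{m\in\mathbb{Z}}m\,e(\nu^{-1}(m))$ of the two unstable loci $\lL^{-}\subset\lL_n^{\sigma_{-}}(X,\beta)$ and $\lL^{+}\subset\lL_n^{\sigma_{+}}(X,\beta)$. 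Stratifying both loci by the numerical class $(\beta',n')$ of the sheaf piece $F$ --- with $(\beta'',n'')=(\beta-\beta',n-n')$ and $n'/\omega\beta'=\mu$ --- it then suffices to compare the strata one by one.

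\textbf{The geometric stratum.} On a fixed stratum the maps $\pi_{\pm}$ of the diagram in Paragraph~\ref{sub:wall} land in $M_{n'}(X,\beta')\times\lL_{n''}^{\sigma_0}(X,\beta'')$, and over a point $(F,G)$ with $F$ \emph{stable} the fibre of $\pi_{-}$ is the space $\mathbb{P}(\Ext^1_X(G,F))$ of nonsplit extensions of $G$ by $F$ up to scalar, while the fibre of $\pi_{+}$ is $\mathbb{P}(\Ext^1_X(F,G))$. The key numerical input is Riemann--Roch on the Calabi--Yau $3$-fold: from $\ch(F)=(0,0,\beta',n')$ and $\ch(G)=(-1,0,\beta'',n'')$ one computes $\chi(F,G)=n'$ and $\chi(G,F)=-n'$; since $F$ and $G$ are stable of equal phase at $\sigma_0$ and are not isomorphic, $\Hom(F,G)=\Hom(G,F)=0$, so Serre duality $\Ext^i_X(G,F)\cong\Ext^{3-i}_X(F,G)^{*}$ gives $\dim\Ext^1_X(G,F)-\dim\Ext^1_X(F,G)=-\chi(G,F)=n'$. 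Tracking $\nu$ through the two projective-bundle strata, using the Behrend-function identities for moduli of objects of $D^b(X)$ carrying symmetric obstruction theories, should convert this dimension count into the signed contribution $(-1)^{n'-1}n'$, so that the $(\beta',n')$-stratum contributes $(-1)^{n'-1}n'\,e_{\nu}(M_{n'}(X,\beta'))\,L_{n'',\beta''}(\sigma_0)$; summing over the admissible $(\beta',n')$ produces (\ref{expect2}) with $N_{n',\beta'}=e_{\nu}(M_{n'}(X,\beta'))=\int_{[M_{n'}(X,\beta')]^{\rm{vir}}}1$.

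\textbf{The main obstacle.} The reason the statement is only a conjecture is the \emph{strictly semistable} locus: when the sheaf piece $F$ is merely $\omega$-semistable, the quotient $M_{n'}(X,\beta')$ is not a fine moduli space, the extension spaces jump, and the clean projective-bundle picture breaks down, so there is at present no definition of the virtual count $N_{n',\beta'}\in\mathbb{Q}$ with the deformation invariance needed for (\ref{expect2}) to be meaningful. The natural remedy is a motivic Hall algebra of $\aA^p_{1/2}$ in the style of Joyce~\cite{Joy4}, together with a Behrend-function-weighted integration map compatible with the Hall-algebra product; but setting this up rigorously here requires foundational input that is not available, since $\aA^p$ is neither noetherian nor artinian (Remark~\ref{rmk:art}, Remark~\ref{rmk:noe}) and the relevant moduli stacks are not known to be global quotients carrying symmetric obstruction theories. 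A secondary difficulty, present even in the stable stratum, is making the Behrend-function bookkeeping of the extension classes precise --- proving that the two projective bundles contribute exactly $\pm\dim\Ext^1$ with the sign $(-1)^{n'-1}$ --- which is itself the content of a Behrend-function wall-crossing formula that would have to be checked to apply verbatim in $\aA^p_{1/2}$. I therefore expect a complete proof only once these two ingredients are in place, at which point the stratification and Riemann--Roch computations above should assemble into (\ref{expect2}).
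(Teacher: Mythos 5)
The statement is a conjecture, and the paper offers no proof of it — only the motivating discussion in Paragraph~\ref{sub:wall} (the diagram with $\pi_{\pm}$ over $M_{n'}(X,\beta')\times\lL^{\sigma_0}_{n''}(X,\beta'')$, the heuristic $\sharp\Ext^1(G,F)-\sharp\Ext^1(F,G)\rightsquigarrow(-1)^{n'-1}\chi(F,G)=(-1)^{n'-1}n'$, and the acknowledgement that strictly semistable $F$ forces an appeal to Joyce's motivic invariants~\cite{Joy4}). Your sketch reproduces essentially that same reasoning, supplies the Riemann--Roch and Serre-duality details behind $\dim\Ext^1(G,F)-\dim\Ext^1(F,G)=n'$, and correctly identifies the same two gaps (the strictly semistable locus and the Behrend-function bookkeeping) that keep the statement conjectural, so it matches the paper's approach.
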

Note that a term in the right hand side of (\ref{expect2})
is non-zero only if $\beta', \beta'' \in \nN(\beta)$, so
there are only finite number of non-zero terms. 
Also for a non-zero term in (\ref{expect2}), 
the class $\beta''$ is smaller than $\beta$, i.e. 
$0\le \beta'' \cdot H < \beta \cdot H$ for an ample divisor $H$. 
Hence
if Conjecture~\ref{conj} is true, we can describe how the invariants
 $L_{n, \beta}(\sigma)$ vary under change of $\sigma$
 inductively on $\beta$, and eventually
provides a relationship between $P_{n, \beta}$ and $P_{-n, \beta}$. 
We expect this relationship will show the 
rationality conjecture of the generating series (\ref{generating}). 

In the next paper~\cite{Togen}, we will proceed this idea further using 
D.~Joyce's work~\cite{Joy4}
on the wall-crossing formula of
counting invariants of semistable objects in abelian categories. 
It will turn out in~\cite{Togen}
 that the similar rationality property holds for 
the generating functions of euler numbers of the moduli spaces of stable 
pairs, using the results in this paper. 
We remark that Joyce's work is applied for the invariants 
without virtual fundamental cycles. However
by the recent progress in this field~\cite{K-S}, \cite{J-S1}, 
\cite{J-S2}, we guess that the similar wall-crossing formula 
should hold after involving virtual classes. 
At this moment, the works~\cite{K-S}, \cite{J-S2}
are not enough to conclude the rationality conjecture. 
(The work~\cite{K-S} assumes~\cite[Conjecture~4]{K-S} 
to show the main result~\cite[Theorem~8]{K-S},
and the result of~\cite{J-S2} is only applied 
for counting invariants of coherent sheaves, not 
for those of objects in the derived category.)
Finally we mention that T.~Bridgeland~\cite{Brs5}
proved the rationality conjecture assuming the 
main result of Kontsevich-Soibelman~\cite[Theorem~8]{K-S}, 
without using any notion of stability conditions.

\section{Examples}\label{section:Ex}
In this section, we see the wall-crossing phenomena 
of limit stable objects in several examples. 

\subsection{$\beta$ is an irreducible curve class}
Suppose that $\beta\in H^4(X, \mathbb{Z})$ is an irreducible class, i.e. 
$\beta=[C]$ for an irreducible and reduced curve $C\subset X$. 
For $\sigma=k\omega +i\omega$, we have 
\begin{align*}
\lL_n^{\sigma}(X, \beta)= \left\{ \begin{array}{ll} 
P_n(X, \beta) & \mbox{ if }k<-\frac{n}{2\omega \beta}, \\
P_{-n}(X, \beta)& \mbox{ if }k> -\frac{n}{2\omega \beta}.
\end{array} \right.
\end{align*}
Note that $\mu_{n, \beta}=n/\omega \beta$ in this case, 
so
Theorem~\ref{LiPT}
yields the above result. 
The formula (\ref{expect2}) becomes
$$P_{n, \beta}-P_{-n, \beta}=(-1)^{n-1}n N_{n, \beta},$$
which is proved in~\cite[Proposition 2.2]{PT3}. 
Hence Conjecture~\ref{conj} is true in this case.

\subsection{$\beta$ is a reducible curve class}
Suppose that there are smooth rational curves $C_1$, $C_2$
on $X$ such that 
$$N_{C_i/X}\cong \oO_{C_i}(-1)^{\oplus 2}, \quad 
\beta=[C_1]+[C_2], \quad C_1 \cap C_2=\{p\},$$
where $C_1 \cap C_2$ is the scheme theoretic intersection. 
Let $C=C_1 \cup C_2$, 
 $d_i =\omega \cdot C_i$ and assume that $d_1>d_2>0$. 
This is possible if $C_1$ and $C_2$ determine linearly independent
homology classes in $H_2(X, \mathbb{R})$. 
As for $\lL_1^{\sigma}(X, \beta)$, we have the following, 
\begin{align*}
\lL_1^{\sigma}(X, \beta)= \left\{ \begin{array}{ll} 
P_1(X, \beta)\cong \Spec \mathbb{C} & \mbox{ if }k<-\frac{1}{2(d_1+d_2)}, \\
P_{-1}(X, \beta)=\emptyset & \mbox{ if }k> -\frac{1}{2(d_1+d_2)}.
\end{array} \right.
\end{align*}
In fact $\mu_{1, \beta}=1/(d_1+d_2)$ and $\mu_{-1, \beta}=-1/(d_1+d_2)$
in this case, so we can apply Theorem~\ref{LiPT}.  
The set of 
stable pairs $E\in \aA^p$ with $(\ch_2(E), \ch_3(E))=(1, \beta)$
consists of one element $\{I_C[1]\}$, and 
we can easily compute 
$$\Ext_X^1(I_C[1], I_C[1])=0. $$  
Hence scheme theoretically $P_1(X, \beta)$
is isomorphic to $\Spec \mathbb{C}$.
If $k>-1/2(d_1+d_2)$, then the exact sequence in $\aA^p$, 
$$0 \lr \oO_C \lr I_C[1] \lr \oO_X[1] \lr 0, $$
destabilizes $I_C[1]$. According to Proposition~\ref{wall-cross}, 
 we might obtain stable object 
as an extension, 
$$0 \lr \oO_X[1] \lr E \lr \oO_C \lr 0.$$
However since $\Ext_X^1(\oO_C, \oO_X[1])=H^1(\oO_C)=0$, 
the above sequence splits, so $E$ is not $\sigma$-limit 
stable.
In fact we can check that $P_{-1}(X, \beta)$ is empty 
in this case. 
The counting invariants are as follows, 
\begin{align*}
L_{1, \beta}(\sigma)= \left\{ \begin{array}{cl} 
1 & \mbox{ if }k<-\frac{1}{2(d_1+d_2)}, \\
0 & \mbox{ if }k> -\frac{1}{2(d_1+d_2)}.
\end{array} \right.
\end{align*}
The formula (\ref{expect2}) is easily checked to 
hold in this case. 

Next let us investigate $\lL_2^{\sigma}(X, \beta)$. 
The result is as follows.
\begin{align*}
\lL_2^{\sigma}(X, \beta)= \left\{ \begin{array}{cl} 
P_2(X, \beta)\cong C  & \mbox{ if }k<-\frac{1}{2d_2}, \\
\mathbb{P}^1 & \mbox{ if } -\frac{1}{2d_2}<k< -\frac{1}{d_1+d_2}, \\
P_{-2}(X, \beta)=\emptyset & \mbox{ if }k>-\frac{1}{d_1+d_2}.
\end{array} \right.
\end{align*}
In this case, we have $\mu_{2, \beta}=1/d_2$ and 
$\mu_{-2, \beta}=-2/(d_1+d_2)$. 
Also giving a point of 
$P_2(X, \beta)$ is equivalent to choosing a closed point
of $C$.  
By Theorem~\ref{LiPT} together with some more arguments, 
we see
$$\lL_2^{\sigma}(X, \beta)=P_2(X, \beta)\cong C, $$ 
for $k<-1/2d_2$. 

Suppose that 
$$-\frac{1}{2d_2}<k< -\frac{1}{d_1+d_2}.$$ 
Any stable pair $E\in P_2(X, \beta)$ admits an exact sequence in $\aA^p$, 
\begin{align}\label{more}
0 \lr I_C[1] \lr E \lr \oO_x \lr 0,\end{align}
for $x\in C$. If $x\in C_2$, then we have the exact sequence 
in $\aA^p$, 
$$ 0\lr \oO_{C_2} \lr E \lr I_{C_1}[1] \lr 0, $$
which destabilizes $E$. 
In fact $E\in \aA^p$ given in (\ref{more})
is $\sigma$-limit stable if and only if $x\notin C_2$. 
Hence $C_1 \setminus \{p\}=\mathbb{C}$ is embedded into 
$\lL_2^{\sigma}(X, \beta)$. It is compactified by
adding a point corresponding to the (unique) extension, 
\begin{align}\label{more2}
0 \lr I_{C_1}[1] \lr E' \lr \oO_{C_2} \lr 0.\end{align}
One can check that 
$$L_n^{\sigma}(X, \beta)=(C_1 \setminus \{p\}) \cup \{E'\}.$$
These objects
are also obtained as two term complexes
$\oO_X \stackrel{s}{\to} F$,
where $F \in \Coh_{\le 1}(X)$ is a unique non-trivial extension, 
\begin{align}\label{uni}
0 \lr \oO_{C_1} \lr F \lr \oO_{C_2} \lr 0.\end{align}
From these observations, we see that 
$$\lL_2^{\sigma}(X, \beta)=\mathbb{P}(H^0(F)) \cong \mathbb{P}^1.$$
Finally suppose that $k>-1/(d_1+d_2)$. 
Then for a two term complex $E=(\oO_X \stackrel{s}{\to} F)$
where $F$ is as in (\ref{uni}), the exact sequence in $\aA^p$, 
$$0 \lr F \lr E \lr \oO_X[1] \lr 0, $$
destabilizes $E$. Also there are no non-trivial extensions, 
$$0 \lr \oO_X[1] \lr E' \lr F \lr 0, $$
because $\Hom(F, \oO_X[2])=H^1(F)=0$. 
In this case, one can check that $\lL_2^{\sigma}(X, \beta)=
P_{-2}(X, \beta)=\emptyset$. 
For the counting invariants, we obtain
\begin{align*}
L_{2, \beta}(\sigma)= \left\{ \begin{array}{cl} 
-1  & \mbox{ if }k<-\frac{1}{2d_2}, \\
-2 & \mbox{ if } -\frac{1}{2d_2}<k< -\frac{1}{d_1+d_2}, \\
0 & \mbox{ if }k>-\frac{1}{d_1+d_2}.
\end{array} \right.
\end{align*}
The formula (\ref{expect2}) also holds in this case. 
For instance, take 
$$k_{-}<-\frac{1}{2d_2}, \quad k_0=-\frac{1}{2d_2}, \quad 
-\frac{1}{2d_2}<k_{+}< -\frac{1}{d_1+d_2}.$$
A term in the sum of (\ref{expect2}) is non-zero only if
$\beta'=[C_2]$, $n'=1$, $\beta''=[C_1]$ and $n''=1$.
We have $N_{n', \beta'}=1$, and $L_{n'', \beta''}(\sigma_0)=1$. 
We can check (\ref{expect2}) as follows,
\begin{align*}
&L_{2, \beta}(\sigma_{-})-L_{2, \beta}(\sigma_{+})=-1-(-2)=1, \\
&(-1)^{1-1}n' N_{1, [C_2]}L_{1, [C_1]}(\sigma_0)=(-1)^{1-1}1\cdot 
1=1.
\end{align*}
\subsection{$\beta$ is a multiple curve class}
Let $C\subset X$ be a smooth rational curve with 
$$N_{C/X}\cong \oO_C(-1)^{\oplus 2}, \quad \beta=2[C].$$
Let $\sigma=k\omega +i\omega$ and set $d=\omega \cdot C$. 
For $\lL_3^{\sigma}(X, \beta)$, we have the following, 
\begin{align*}
\lL_3^{\sigma}(X, \beta)= \left\{ \begin{array}{ll} 
P_3(X, \beta)\cong \mathbb{P}^1  & \mbox{ if }k<-\frac{1}{d}, \\
P_{-3}(X, \beta) = \emptyset & \mbox{ if }k>-\frac{1}{d}.
\end{array} \right.
\end{align*}
In this case, we have $\mu_{3, \beta}=2/d$
and $\mu_{-3, \beta}=-2/d$
so Theorem~\ref{LiPT} is applied. We have 
$P_3(X, \beta) \cong \mathbb{P}^1$ 
for $k<-1/d$ by~\cite[Section 4]{PT}. 
If $k>-1/d$, we have the exact 
sequence in $\aA^p$ for $E\in P_3(X, \beta)$, 
$$0 \lr \oO_C(1) \lr E \lr I_C[1] \lr 0, $$
which destabilizes $E$. Since $\Hom(\oO_C(1), I_C[2])=0$, 
there is no non-trivial extension, 
$$0 \lr I_C[1] \lr E' \lr \oO_C(1) \lr 0, $$
and in fact $\lL_3^{\sigma}(X, \beta)=P_{-3}(X, \beta)=\emptyset$
in this case. The counting invariants are 
\begin{align*}
L_{3, \beta}(\sigma)= \left\{ \begin{array}{cl} 
-2  & \mbox{ if }k<-\frac{1}{d}, \\
0 & \mbox{ if }k>-\frac{1}{d}.
\end{array} \right.
\end{align*}
The formula (\ref{expect2}) also holds in this case. 

In the same way, 
$\lL_4^{\sigma}(X, \beta)$ is as follows, 
\begin{align*}
\lL_4^{\sigma}(X, \beta)= \left\{ \begin{array}{cl} 
P_4(X, \beta)  & \mbox{ if }k<-\frac{3}{2d}, \\
\Spec \mathbb{C} & \mbox{ if }-\frac{3}{2d}<k < -\frac{1}{d}, \\
P_{-4}(X, \beta)=\emptyset & \mbox{ if }k>-\frac{1}{d}.
\end{array} \right.
\end{align*}
Note that $\mu_{4, \beta}=3/d$ and $\mu_{-4, \beta}=-2/d$
in this case. 
If $-3/2d <k < -1/d$, the 
sequence 
$$0 \lr \oO_C(2) \lr E \lr I_C[1] \lr 0, $$
destabilizes $E\in P_4(X, \beta)$. Instead
the unique non-trivial extension 
$$ 0 \lr I_C[1] \lr E' \lr \oO_C(2) \lr 0, $$
becomes $\sigma$-limit stable. The object
$E'$ is isomorphic to a two term complex 
$\oO_X \stackrel{s}{\to} \oO_C(1)^{\oplus 2},$
and the sequence 
$$0 \lr \oO_C(1)^{\oplus 2} \lr E' \lr \oO_X[1] \lr 0, $$
destabilizes $E'$ if $k>-1/d$. 
We have $\Hom(\oO_C(1)^{\oplus 2}, \oO_X[2])=0$, and 
in fact $\lL_4^{\sigma}(X, \beta)=P_{-4}(X, \beta)=\emptyset$ for $k>-1/d$. 
The counting invariants are as follows, 
\begin{align*}
L_{4, \beta}(\sigma)= \left\{ \begin{array}{cl} 
4  & \mbox{ if }k<-\frac{3}{2d}, \\
1 & \mbox{ if }-\frac{3}{2d}<k < -\frac{1}{d}, \\
0 & \mbox{ if }k>-\frac{1}{d}.
\end{array} \right.
\end{align*}
For $P_{4, \beta}=4$, see~\cite[Section 4]{PT}. 
If $k_{-}<-3/2d$, $k_0=-3/2d$ and
$-3/2d<k_{+}<-1/d$, one can check that
(\ref{expect2}) holds.  
On the other hand, the formula (\ref{expect2}) 
is problematic if 
$$-\frac{3}{2d}<k_{-}<-\frac{1}{d}, \quad k_{0}=-\frac{1}{d}, 
\quad k_{+}>-\frac{1}{d}.$$ 
Since $\oO_C(1)^{\oplus 2}$ is not $\omega$-Gieseker
stable, we do not how to define $N_{4, 2[C]}$. 
 In this case, 
$N_{4, 2[C]}$ should be defined by Joyce's 
invariant~\cite[Definition 3.18]{Joy4}
 after involving virtual classes. 
For instance let us ignore virtual classes. By definition, 
Joyce's invariant
$N_{4, 2[C]}$
is the ``euler number'' of the following ``virtual'' stack, 
$$\left[\Spec \mathbb{C}/\GL(2, \mathbb{C})\right] -
\frac{1}{2}\left[\Spec \mathbb{C}/(\mathbb{A}^1 \rtimes
\mathbb{G}_m ^{2})\right], $$
which results $N_{4, 2[C]}=-1/4$. We have 
\begin{align*}
& L_{4, \beta}(\sigma_{-})-L_{4, \beta}(\sigma_{+})=1-0=1, \\
& (-1)^{4-1}4N_{4, 2[C]}L_{0, 0}(\sigma_0)=-1 \cdot 4\cdot (-1/4) \cdot 1=1,
\end{align*}
as desired.

Yukinobu Toda

Institute for the Physics and 
Mathematics of the Universe (IPMU), University of Tokyo, 

Kashiwano-ha 5-1-5, Kashiwa City, Chiba 277-8582, Japan

\textit{E-mail address}:toda@ms.u-tokyo.ac.jp, toda-914@pj9.so-net.ne.jp

\end{document}